\let\expandafter\oldproof\csname\string\proof\endcsname
\let\oldendproof\endproof
\renewenvironment{proof}[1][\proofname]{%
	\oldproof[\bf #1]%
}{\oldendproof}
\theoremstyle{plain}
\newtheorem{lemma}{Lemma}[section]
\newtheorem{theorem}[lemma]{Theorem}
\newtheorem{claim}[lemma]{Claim}
\newtheorem{proposition}[lemma]{Proposition}
\newtheorem{conjecture}[lemma]{Conjecture}
\newtheorem{definition}[lemma]{Definition}
\newtheorem{problem}[lemma]{Problem}
\theoremstyle{definition}
\newtheorem{remark}[lemma]{Remark}
\definecolor{RED}{rgb}{1,0,0}\definecolor{BLUE}{rgb}{0,0,1} 
\newcommand*{\net}{{\mathcal C}}
\newcommand*{\headset}{{\mathcal H}}
\newcommand*{\tailset}{{\mathcal T}}
\newcommand*{\up}[1]{{#1}^{\text{U}}}
\newcommand*{\midd}[1]{{#1}^{\text{M}}}
\newcommand*{\down}[1]{{#1}^{\text{D}}}
\date{}
\title{ \vspace{-0.8cm}Ramsey problems for monotone paths in graphs and hypergraphs
}
\author{
Lior Gishboliner\thanks{Department of Mathematics, ETH, Z\"urich, Switzerland. Research supported in part by SNSF grant 200021\_196965. Email: \textbf{\{lior.gishboliner, zhihan.jin, benjamin.sudakov\}@math.ethz.ch}.}
\and 
Zhihan Jin\footnotemark[1]
\and Benny Sudakov\footnotemark[1]
}
\begin{document}

    \maketitle      

    \begin{abstract}
    \noindent
    The study of ordered Ramsey numbers of monotone paths for graphs and hypergraphs has a long history, going back to the celebrated work 
    by Erd\H{o}s and Szekeres in the early days of Ramsey theory. In this paper we obtain several results in this area, establishing two conjectures of Mubayi and Suk and improving
    bounds due to Balko, Cibulka, Kr\'al and Kyn\v{c}l. We also obtain a color-monotone version of the well-known Canonical Ramsey Theorem of Erd\H{o}s and Rado, which could be of independent interest. 
    \end{abstract}

    \section{Introduction}
    An {\em ordered $k$-uniform hypergraph} $H$ is a hypergraph whose vertices have a fixed ordering.
    The {\em ordered Ramsey number} $R_{<}(G,H)$ of ordered $k$-uniform hypergraphs $G$ and $H$ is the minimum $N$ such that every red/blue edge-coloring of the complete $k$-uniform hypergraph 
    on $\{1, \dots, N\}$ has a red copy of $G$ or a blue copy of $H$ whose vertices appear in the correct ordering.
    One of the most well-studied hypergraphs in this context is the
    monotone tight path $P_n^{(k)}$, which is the ordered $k$-uniform hypergraph with vertices $1,\dots,n$ and
    edges $(i,i+1,\dots,i+k-1)$ for $i = 1,\dots,n-k+1$. We also write $P_n := P_n^{(2)}$ for the monotone graph path with $n$ vertices. 
    

    The study of ordered Ramsey numbers dates back to the very beginning of Ramsey theory, as some of the most foundational theorems in the field fall into this framework. A key example is the celebrated Erd\H{o}s-Szekeres lemma~\cite{ES}, whose proof gives $R_{<}(P_s,P_n) = R_{<}(K_s,P_n) = (s-1)(n-1)+1$.
    
    A central feature of ordered Ramsey problems is that they often originate from and have implications to problems in geometry. 
    For example, the famous Erd\H{o}s-Szekeres cups-caps theorem~\cite{ES}, which states that $R_{<}(P_s^{(3)},P_n^{(3)}) = \binom{s+n-4}{s-2} + 1$,
    was used by Erd\H{o}s and Szekeres to prove the so-called Happy Ending Theorem in the same paper. This result states that every set of $\binom{2n-4}{n-2} + 1$ points in the plane in general position contains $n$ points in convex position. The happy ending theorem was later extended by several authors by replacing ``points" with ``convex bodies". For example, Pach and T\'oth~\cite{PT} showed that there is $N = N(n)$ such that every family of $N$ convex bodies in the plane in general position, with any two bodies having at most two common boundary points, contains $n$ bodies in convex position. Fox, Pach, Sudakov and Suk~\cite{FPSS} observed that this problem is related to the $3$-color Ramsey number of $P_n^{(3)}$, and used this connection to improve the best known bound for such an $N(n)$. This led to the further study of the multicolor Ramsey number of $P_n^{(k)}$, and this Ramsey number was determined exactly by Moshkovitz and Shapira~\cite{Moshkovitz_Shapira} and Milans, Stolee \nolinebreak and \nolinebreak West~\cite{MSW}.

   Another motivation for studying Ramsey numbers of monotone tight paths comes from the work of Mubayi and Suk~\cite{MS_k-uniform}, who observed that the $k$-uniform Ramsey number $R_{<}(P_s^{(k)},K_n^{(k)})$ is closely related to the multicolor $(k-1)$-uniform Ramsey number of cliques. Proving tight bounds for the latter is one of the major open problems in Ramsey theory (see~\cite{Sawin22} and its references for the latest results for the graph case, i.e. $k=3$).  

    A systematic study of ordered Ramsey numbers of graphs was initiated by Conlon, Fox, Lee and Sudakov~\cite{CFLS} and
    Balko, Cibulka, Kr\'al and Kyn\v{c}l~\cite{BCKK}. 
    One of the main problems considered in these works is the Ramsey problem for powers of paths.
    Let $P_n^t$ denote the $t$'th power of $P_n$, namely, the ordered graph with vertices $1,\dots,n$ and edge-set $\{(i,j) : i < j \leq i+t\}$. Note that for $P^1_n = P_n$, the Erd\H{o}s-Szekeres lemma gives $R_{<}(K_s,P_n^1) = (s-1)(n-1)+1$, which is linear in $n$ for fixed $s$. Mubayi and Suk~\cite{MS_tightPath} recently conjectured that $R_{<}(K_s,P_n^t)$ remains linear in $n$ for all fixed $s,t$, and proved the bound $R_{<}(K_{s},P_n^t) = O_{s,t}(n \log^{s-2}n)$. 
    Here, we confirm their conjecture, which can be seen as an extension of the Erd\H{o}s-Szekeres lemma.
    \begin{theorem}\label{thm:clique vs power-path}
        $R_{<}(K_{s+1},P_n^t) \leq (24s^3)^{st} n$.
    \end{theorem}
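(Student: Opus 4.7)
The plan is to prove the bound by induction on $s$, aiming for the recurrence
\[
R_{<}(K_{s+1}, P_n^t) \leq (24s^3)^{t} \cdot R_{<}(K_s, P_n^t).
\]
Unfolding this from the trivial base $R_{<}(K_2, P_n^t) = n$ (absence of a red edge forces every edge to be blue, and $\{1,\ldots,n\}$ spans a blue $P_n^t$) yields $R_{<}(K_{s+1}, P_n^t) \leq \prod_{j=2}^{s}(24j^3)^{t}\cdot n \leq (24s^3)^{st}n$, since $\prod_{j=2}^{s} j^{3t}=(s!)^{3t}\leq s^{3st}$.

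For the inductive step, set $R:=R_{<}(K_s,P_n^t)$ and $N:=(24s^3)^{t}R$, and fix a red/blue coloring of $K_N$ with no red $K_{s+1}$. Split on the maximum red-degree. If some vertex $v$ has red-degree at least $R$, its red-neighborhood contains no red $K_s$ (otherwise together with $v$ one obtains a red $K_{s+1}$) and has size at least $R=R_{<}(K_s,P_n^t)$, so by the induction hypothesis it contains a blue $P_n^t$. Otherwise every red-degree is below $R$, and I build the path greedily from left to right: maintain a blue $P_k^t$-path $v_1<\cdots<v_k$ and pick $v_{k+1}$ as the least index exceeding $v_k$ that is blue-adjacent to the entire window $\{v_{k-t+1},\ldots,v_k\}$. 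An amortised argument, in which each red edge from a path-vertex to its right can be responsible for at most one skipped vertex, yields $v_k\leq k(R+1)$. If the greedy stalls at some $k<n$, then every $w>v_k$ is red to some window-vertex, and pigeonhole delivers a window-vertex $v_i$ with at least $(N-v_k)/t$ red-neighbors above $v_k$; when this number is at least $R$, the induction hypothesis applied inside that neighborhood (which has no red $K_s$) finishes the job.

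The main obstacle is closing the low red-degree case when the greedy stalls with $v_k$ close to $N$, so that $(N-v_k)/t<R$. Combining this with $v_k\leq n(R+1)$ forces $N<(n+t)(R+1)$, and for moderate $n$ (concretely $n \lesssim (24s^3)^{t}$) the choice $N=(24s^3)^{t}R$ already excludes it outright. For larger $n$ the plain greedy does not suffice, because the bound $v_k\leq n(R+1)$ grows linearly in $n$ while $N$ only gains the constant factor $(24s^3)^{t}$; a bootstrap is required. I expect the closing argument to proceed either by iterating the greedy from several different starting positions and stitching together the resulting blue $P^t$-fragments (exploiting that any obstruction produces a new high-red-degree vertex that can again trigger the inductive hypothesis), or by appealing to the color-monotone Canonical Ramsey Theorem the paper develops, which should furnish a much more robust way to find a long $P^t$-like blue structure inside a coloring whose red graph is everywhere sparse. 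This closing step is where I expect the bulk of the technical work of the proof to be concentrated.
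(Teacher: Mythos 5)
Your proposal has a genuine gap, which you acknowledge yourself: the ``low red-degree'' case is left open, and it is precisely there that all the content of the theorem lives. Worse, the case split you set up cannot be closed by any argument that uses only the degree bound. Consider the red graph that is a disjoint union of red cliques, each of size $R-1$, placed on consecutive intervals of $[N]$, with all other edges blue. Every red degree is below $R$, yet a blue $P_n^t$ can contain at most one vertex from each interval (any two path vertices in the same interval would be consecutive-in-order path vertices within window distance $1\le t$, hence would need to be blue but are red), so no blue $P_n^t$ exists unless $N\ge (R-1)n$. Since $R=R_{<}(K_s,P_n^t)=\Theta_{s,t}(n)$ by your own induction hypothesis, this forces $N=\Omega(n^2)$, destroying the linear bound. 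This red graph of course contains red copies of $K_{s+1}$ once $R>s+1$, but your argument in the low-degree case never invokes the absence of a red $K_{s+1}$ again --- it only uses the degree cap --- so it cannot rule the example out. The hypothesis ``no red $K_{s+1}$'' must be exploited globally, not just through a single vertex's red neighborhood, and neither of your two suggested bootstraps (restarting the greedy, or invoking the canonical Ramsey theorem) supplies a mechanism for that.

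For comparison, the paper's proof is structured entirely differently. It first extracts $M=2^{s-1}n$ large blue cliques $V_1<\dots<V_M$ (using $R(K_{s+1},K_{sr})$ and the absence of a red $K_{s+1}$), partitions each into $s$ blocks, and records for each clique the statistics $\chi_j(i)$ measuring the longest blue $P_\ell^t$ ending in certain blocks of $V_i$. The combinatorial heart is then twofold: \cref{lem:chi-functions} finds a balanced well-ordered $(\chi_0,\dots,\chi_{s-2})$-forest in these statistics, which via \cref{claim:blue K_{t,t}} yields an {\em $s$-red-net} (a tree-structured family of blue cliques with no blue $K_{t,t}$ between a clique and any of its descendants); and \cref{lem:s-net is enough,cor:s-net is enough} show by induction on $s$ that such a red-net forces either $s+1$ pairwise ``red-dense'' sets (hence a red $K_{s+1}$ via K\H{o}vari--S\'os--Tur\'an) or $s$ disjoint long blue path-powers that can be stitched together. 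Your high-red-degree observation and the base case are fine, but the missing step is not a technicality --- it is the entire proof.
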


     Mubayi and Suk~\cite{MS_tightPath} also considered the ``diagonal case" of $P_n^t$ versus $K_n$, and proved the quasipolynomial bound $R_{<}(P_n^t,K_n) \leq 2^{O_t(\log^2 n)}$. Here we improve this to a polynomial bound.
    \begin{theorem}\label{thm:Kn Pn^t}
    	$R_{<}(P_n^t,K_n) \leq  2^{2t-1}n^{t(2t-1)}$. 
    \end{theorem}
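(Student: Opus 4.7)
The plan is an iterated Erd\H{o}s--Szekeres-style labeling argument. Assume for contradiction that there is a red/blue coloring of the complete graph on $[N]$ with $N = 2^{2t-1} n^{t(2t-1)}$ containing no red $P_n^t$ and no blue $K_n$.

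For each vertex $v \in [N]$ I would attach a structural label $\Phi(v) = (L_1(v),\ldots,L_{2t-1}(v))$, where each coordinate $L_j(v)$ records the length of a longest red ``$P^t$-like'' configuration of a prescribed type ending at $v$. Under the no-red-$P_n^t$ hypothesis every such length is bounded, so one can arrange that the label space has size at most $n^{t(2t-1)-1}$ (with one coordinate in a slightly narrower range than the others). The labels are designed to enforce the following consistency: whenever $u < v$ satisfy $\Phi(u) = \Phi(v)$, a red edge $uv$ would let me append $v$ to the optimal red configuration certifying some coordinate $L_j(u)$, producing a strictly longer red configuration of the same type at $v$ and contradicting $L_j(u) = L_j(v)$. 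Hence every pair of vertices sharing a label is joined by a blue edge.

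A single pigeonhole over the label space then produces at least $N/n^{t(2t-1)-1} = 2^{2t-1} n \geq n$ vertices carrying a common value of $\Phi$. By the consistency property, these vertices span a blue $K_n$, contradicting the standing assumption.

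The main obstacle is the precise design of the $2t-1$ coordinates of $\Phi$. A natural first attempt assigns the first $t$ coordinates to be lengths of longest red $P^j$'s ending at $v$ for $j = 1,\ldots,t$, but these alone do not preclude all red extensions: the tail of an optimal red $P^t$ ending at $u$ need not be entirely red-connected to $v$ even if $uv$ is red. The remaining $t - 1$ coordinates must therefore encode finer invariants of the terminal segment of an optimal red configuration at $v$, chosen so that each possible failure of a red edge to extend the configuration is detected by equality of some $L_j$. Verifying this case analysis and simultaneously controlling the coordinate ranges so that the total label space fits inside $n^{t(2t-1)-1}$ is the technical crux of the argument.
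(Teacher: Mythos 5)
There is a genuine gap, and it sits exactly where you flag the ``technical crux.'' Your plan hinges on the implication: $\Phi(u)=\Phi(v)$ and $uv$ red $\Rightarrow$ contradiction, so that a single pigeonhole over labels yields a blue $K_n$. For $t\geq 2$ this implication is not available from any family of ``length of longest red configuration'' invariants of the kind you describe, because of a tension you cannot resolve by adding more coordinates: the invariants that \emph{do} increase along a single red edge (e.g.\ the length of the longest red monotone path, or of any red $P^j$-power with $j<t$ ending at $v$) are \emph{not} bounded by the hypothesis that there is no red $P_n^t$ --- a red Hamiltonian monotone path can coexist with the absence of a red $P_{t+1}^t$ --- so their ranges cannot be confined to $\mathrm{poly}(n)$; whereas the invariants that \emph{are} bounded by $n$ (lengths of red $P^t$-powers) cannot be incremented by a single red edge $uv$, since appending $v$ requires $v$ to be red-joined to all of the last $t$ vertices of the optimal configuration at $u$. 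You acknowledge the second half of this tension but not the first, and the proposal gives no candidate for coordinates that are simultaneously bounded and incremented. Consequently the claim ``every pair of vertices sharing a label is joined by a blue edge'' is unsupported, and the final pigeonhole does not produce a blue $K_n$.

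The paper's proof resolves this by labeling red \emph{$t$-cliques} rather than vertices (with $\chi(X)$ the length of the longest red $P_\ell^t$ ending at the clique $X$), and by replacing your single pigeonhole with a three-stage argument: (i) a counting lemma showing that ``good pairs'' of consecutive red $t$-cliques $X,Y$ with $\chi(X)\geq\chi(Y)$ are abundant (this uses an auxiliary clique-chain lemma, which itself invokes the blue-$K_n$ alternative); (ii) an averaging step fixing the outer $2t-2$ vertices $x_1<\dots<x_{t-1}<z_1<\dots<z_{t-1}$ and isolating a fiber $Y$ of middle vertices of size $R(K_t,K_n)$, all red-joined to the fixed outer vertices and all with the same $\chi$-value $c$; and (iii) an application of Ramsey's theorem \emph{inside} $Y$ to find either the blue $K_n$ or a red $K_t$, the latter supplying exactly the $t$ mutually red-joined vertices needed to extend a red $P_c^t$ and contradict the constancy of $\chi$ on the fiber. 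Step (iii) is the mechanism that substitutes for your unavailable ``one red edge extends the configuration,'' and it is the ingredient your outline is missing.
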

    
    \noindent
It is easy to see that this Ramsey number is at least $R(K_{t+1},K_n) \geq \tilde{\Omega}(n^{(t+2)/2})$ (see~\cite{BK,Spencer}),
and therefore the exponent of $n$ should grow in terms of $t$.

The dependence on $t$ of the exponent of $n$ in the previous result can be further improved from quadratic to linear if instead of $K_n$ one takes the second graph to also be the path-power~$P_n^t$. 
This problem was first considered by Conlon, Fox, Lee and Sudakov~\cite{CFLS}, who conjectured that the Ramsey number~$R_{<}(P_n^t,P_n^t)$ is polynomial in $n$. Their actual question concerned graphs with bounded bandwidth. But since every $n$-vertex ordered graph with bandwidth $t$ is a subgraph of~$P_n^t$, this question reduces to the above conjecture. Balko, Cibulka, Kr\'al and Kyn\v{c}l~\cite{BCKK} proved the conjecture by showing that $R_{<}(P_n^t,P_n^t) = O_t(n^{129t})$. In the case $t=2$, Mubayi~\cite{Mubayi} improved the bound to $R_{<}(P_n^2,P_n^2) = O(n^{19.5})$. 
    Here we obtain the improved bound $R_{<}(P_n^t,P_n^t) = O_t(n^{4t-2})$. For $t=2$ this gives $R_{<}(P_n^2,P_n^2) = O(n^6)$. In fact, \Cref{thm:Kn Pn^t} gives $R_{<}(P_n^2,K_n) \leq 8n^6$.

    \begin{theorem}\label{thm:P^t}
		$R_{<}(P_n^t,P_n^t) \leq
        (400t^3)^{t^2}n^{4t-2}.
        $
	\end{theorem}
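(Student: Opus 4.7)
The plan is to combine \Cref{thm:clique vs power-path} with a two-step reduction: first extract many ordered monochromatic $(t+1)$-cliques via a block decomposition, then analyze the bipartite edge structure between them via a reduced coloring, and finally stitch a long chain of these cliques into the desired red or blue $P_n^t$.

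\textbf{Step 1 (Block decomposition).} Let $N := (400t^3)^{t^2} n^{4t-2}$. I would partition $[N]$ into $M$ consecutive intervals $I_1 < I_2 < \cdots < I_M$ of size $(24t^3)^{t^2} n$, so that $M = (50/3)^{t^2} \cdot n^{4t-3}$. Applying \Cref{thm:clique vs power-path} with $s=t$ to each $I_\ell$ (and its color-swapped version), I would conclude that either some $I_\ell$ already contains a monochromatic $P_n^t$ (and we are done) or every $I_\ell$ contains both a red $K_{t+1}$, call it $C_\ell^R$, and a blue $K_{t+1}$, call it $C_\ell^B$.

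\textbf{Step 2 (Reduction and stitching).} In the remaining case, we have two ordered sequences of $M$ disjoint monochromatic $(t+1)$-cliques. I would define a reduced coloring on pairs of indices in $\{1, \dots, M\}$ that records, for each ordered pair $(i,j)$, whether the bipartite pattern between $C_i^R$ and $C_j^R$ (respectively, $C_i^B$ and $C_j^B$) contains the specific \emph{staircase} of edges required to merge the two cliques into a piece of $P^t$: namely, using all $t+1$ vertices of each clique in order places two cliques at path-distance at most $t$ only across the boundary, so only the edges $(v, w)$ with $v$ in the last $r$ positions of $C_i$ and $w$ in the first $r'$ positions of $C_j$ with $r+r'\le t+1$ must match the required color. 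Crucially, non-consecutive selected cliques impose no such constraint, since two vertices from them are automatically at path-distance $>t$. An Erd\H{o}s--Szekeres (or \Cref{thm:clique vs power-path}-style) argument on the reduced coloring then extracts a monochromatic monotone chain of length $\ge n/(t+1)$, which stitches into the desired $P_n^t$.

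\textbf{Main obstacle.} The key difficulty is designing the reduced coloring so that \emph{both} colors yield useful stitching: a simple ``red/not-red'' reduction makes the blue case useless, while using many colors (one per bipartite pattern) inflates the multicolor Ramsey number too much. One natural remedy is to use separate reductions for the red cliques $C_\ell^R$ and the blue cliques $C_\ell^B$, each with its own two-color reduced coloring, so that failure to find a monochromatic chain in one color forces a monochromatic chain in the other. A second delicate point is tracking the multiplicative constants: the partition factor $n^{4t-3}$ from Step~1 combined with the block-size factor $n$ gives the $n^{4t-2}$ polynomial degree, while the constant $(400t^3)^{t^2}$ comes from the $(24t^3)^{t^2}$ in \Cref{thm:clique vs power-path} multiplied by a $(50/3)^{t^2}$ blow-up absorbing the reduction and the Erd\H{o}s--Szekeres step.
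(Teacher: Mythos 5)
There is a genuine gap in Step~2, and it is exactly the obstacle you flag but do not resolve. Your reduced coloring records, for a pair of same-color $(t+1)$-cliques, whether the ``staircase'' of boundary edges needed to merge them is entirely of the right color. The negation of this event --- at least one staircase edge has the wrong color --- carries no useful information for the other color, so the reduced coloring is of the form ``useful/useless'' rather than ``useful for red/useful for blue''. An Erd\H{o}s--Szekeres argument on such a coloring can perfectly well return a long monotone path entirely in the useless class. Your proposed remedy of running two separate reductions (one on the $C_\ell^R$'s, one on the $C_\ell^B$'s) does not fix this: the two reductions are logically independent, and both can simultaneously fail to produce a stitchable chain. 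What is missing is a genuine dichotomy in which the failure of a blue connection forces red structure (or vice versa); elsewhere in the paper this is achieved via K\H{o}vari--S\'os--Tur\'an density arguments between \emph{large} blue cliques, but that mechanism is unavailable here because your cliques have only $t+1$ vertices and, more importantly, because on the red side of the diagonal problem you need a red $P_n^t$, not merely a red clique.

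The paper's actual proof avoids stitching altogether. It defines, for each monochromatic $t$-clique $X$, the length $\chi(X)$ of the longest monochromatic $P_\ell^t$ ending at $X$, and calls a pair of same-color $t$-cliques $(X,Y)$ \emph{good} if the last vertex of $X$ is the first vertex of $Y$ and $\chi(X)\ge\chi(Y)$. \Cref{claim:clique_chain} (monochromatic $t$-clique chains) guarantees that good pairs are abundant; a counting and pigeonhole argument then produces fixed $x_1<\dots<x_{t-1}<z_1<\dots<z_{t-1}$ and a large set $Y$ of middle vertices $y$, all with the same color and the same value $\chi(x_1,\dots,x_{t-1},y)=c$. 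Since $|Y|\ge R_<(K_t,P_n^t)$ (bounded via \Cref{thm:clique vs power-path}), $Y$ contains a red $K_t$, and extending a longest red $P_c^t$ through this clique contradicts $\chi(y,z_1,\dots,z_{t-1})\le c$. If you want to salvage your block-decomposition idea, you would need to replace the staircase test by an invariant of this ``longest power-path ending here'' type, for which the inequality $\chi(X)\ge\chi(Y)$ is guaranteed to occur somewhere along any long chain by pigeonhole --- that is the step your proposal is missing.
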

    \noindent

We now move on to $3$-uniform hypergraphs. 
Mubayi~\cite{Mubayi} showed that $R_{<}(K_4^{(3)},P_n^{(3)}) \leq O(n^{21})$ and conjectured that $R_{<}(K_s^{(3)},P_n^{(3)})$ is polynomial in $n$ for every fixed $s$. 
This conjecture was also reiterated by Mubayi and Suk~\cite{MS_survey}. Very recently, they~\cite{MS_tightPath} proved the quasipolynomial bound $R_{<}(K_s^{(3)},P_n^{(3)}) \leq 2^{O_s(\log^2 n)}$. Here we prove 
a polynomial bound, establishing Mubayi's conjecture. 
    \begin{theorem}\label{thm:3-uniform}
        For every $s \geq 3$, there is a constant $C = C(s)$ such that $R_{<}(K_s^{(3)},P_n^{(3)}) \leq C \cdot n^{C}$.
    \end{theorem}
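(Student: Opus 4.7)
I would prove \Cref{thm:3-uniform} by induction on $s$. The base case $s = 3$ is immediate since avoiding a red $K_3^{(3)}$ forces every triple to be blue, and any $n$ ordered vertices then yield a blue $P_n^{(3)}$. For the inductive step, assume $R_{<}(K_{s-1}^{(3)}, P_n^{(3)}) \leq N_{s-1}$ and consider a $2$-coloring of the triples of $[N]$ with no blue $P_n^{(3)}$; the goal is a red $K_s^{(3)}$ provided $N \geq C(s)\, n^{C(s)}$.

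The main tool is the standard pair labeling. For each $i < j$, let $f(i,j)$ denote the number of vertices in the longest blue monotone tight path ending with the pair $(i,j)$, so $f(i,j) \in \{2,\ldots,n-1\}$, and let $C_\ell = \{(i,j) : f(i,j) = \ell\}$, viewed as an ordered graph on $[N]$. The crucial property is that whenever $(i,j), (j,k) \in C_\ell$ with $i < j < k$, the triple $(i,j,k)$ is red (otherwise $f(j,k) \geq \ell+1$). Consequently, if $C_\ell$ contains an ordered $K_s$ on vertices $v_1 < \cdots < v_s$, then every triple $(v_a, v_b, v_c)$ with $a<b<c$ is red via the cherry at $v_b$, so $\{v_1, \ldots, v_s\}$ is already a red $K_s^{(3)}$. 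Pigeonhole produces a level class $C_\ell$ with at least $\binom{N}{2}/(n-2)$ edges, reducing the task to finding an ordered $K_s$ inside a graph of density $\Omega(1/n)$.

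The plan is to build this $K_s$ in $C_\ell$ by combining a greedy one-vertex extension with the induction hypothesis. Choose a vertex $v$ with large right-degree in $C_\ell$, so that $T := R_{C_\ell}(v)$ has size $\Omega(N/n)$. Apply the induction hypothesis to $T$: either a blue $P_n^{(3)}$ arises (done), or there is a red $K_{s-1}^{(3)}$ on some $u_1 < \cdots < u_{s-1} \in T$. Together with $v$, we have $v < u_1 < \cdots < u_{s-1}$ with $(v, u_i) \in C_\ell$ for every $i$, so it only remains to verify that each cross triple $(v, u_i, u_j)$ is red; a blue cross triple would force $f(u_i, u_j) \geq f(v, u_i) + 1 = \ell + 1$, i.e., push the pair $(u_i, u_j)$ to a level strictly higher than $\ell$.

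The main obstacle is precisely this potential lift: the unadorned induction does not rule it out, which is why the Mubayi--Suk argument only yields a quasi-polynomial bound. To avoid it, I would strengthen the induction to produce a red $K_{s-1}^{(3)}$ in $T$ all of whose pairs have $f$-value at most $\ell$; this cap immediately contradicts any blue cross triple, so all cross triples become red and $\{v, u_1, \ldots, u_{s-1}\}$ is the desired $K_s^{(3)}$. Establishing this strengthened statement requires a more refined pigeonhole in which one simultaneously tracks $f(i,j)$ and the backward analogue $f'(i,j)$ (the number of vertices in the longest blue tight path \emph{starting} at $(i,j)$, satisfying $f + f' \leq n+1$ by absence of a blue $P_n^{(3)}$), allowing the argument to iterate cleanly while losing only a polynomial factor in $n$ per induction step rather than the quasi-polynomial factor in the earlier work. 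This refined two-variable pigeonhole is the technical heart of the proof and is where the polynomial bound $C(s) n^{C(s)}$ ultimately comes from.
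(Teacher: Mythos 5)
Your reduction is the same one the paper uses (its Proposition~\ref{prop:weakly non-increasing}): label each pair $(i,j)$ by the length of the longest blue tight path ending at it, note that a blue triple forces a strict increase, and look for a clique in $[N]$ on which these labels behave well enough that all triples are forced red. But from that point on your proposal does not contain a proof. The entire difficulty of the theorem is concentrated in the step you defer --- producing a red $K_{s-1}^{(3)}$ in $T$ whose pairs all have $f$-value at most $\ell$ --- and the proposed fix (``a more refined pigeonhole simultaneously tracking $f$ and the backward analogue $f'$'') is a promissory note, not an argument: you give no mechanism by which the cap $f(u_i,u_j)\le \ell$ is enforced, and the inequality $f+f'\le n+1$ by itself does not supply one. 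Note also that your intermediate reformulation (``find an ordered $K_s$ inside a single level class $C_\ell$ of density $\Omega(1/n)$'') cannot possibly yield polynomial bounds if taken literally: requiring all pairs to lie in one class is asking for a monochromatic $K_s$ in an $n$-coloring of $\binom{N}{2}$, which already for $s=3$ needs $N$ super-polynomial in $n$.

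The paper gets around exactly this obstruction by relaxing ``all pairs at the same level'' to the notion of a \emph{non-increasing} set (Definition~\ref{def:non-increasing}): for every triple $x<y<z$ one only requires $\chi(x,y)\ge\chi(y,z)$ together with $\chi(x,z)\in\{\chi(x,y),\chi(y,z)\}$, which still forces every triple red but is weak enough to be found in polynomially many vertices. Even then the induction on $s$ does not close on its own; the paper strengthens the induction hypothesis in a different direction than you propose, asking for the auxiliary structure $H_{s,t}$ consisting of a non-increasing $s$-set followed by a monotone path with non-increasing colors, proves the recursion $f(n;s,t)\le f(n;s-1,t+1)^{s+t}\, n^{s-1}$ (Lemma~\ref{lem:induction}), and grounds the induction at $s=2$ with the Chv\'atal--Koml\'os theorem on monotone paths. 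None of this machinery, nor any substitute for it, appears in your proposal, so the polynomial bound is not established.
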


    We will derive \Cref{thm:3-uniform} from another result which might be of independent interest. It can be viewed as an ordered version of the celebrated Canonical Ramsey Theorem of Erd\H{o}s and Rado~\cite{ER} (here ``ordered" refers to an order on the colors, as we shall see). To state our theorem, we need the following definition: 

    \begin{definition}\label{def:lexicographic}
        Let $\chi : \binom{N}{2} \rightarrow \mathbb{N}$. A set $x_1 < \dots < x_s$ is {\em lexicographic} if there are colors $c_1,\dots,c_{s-1} \in \mathbb{N}$ such that one of the following \nolinebreak holds:
        \begin{enumerate}
            \item $\chi(x_i,x_j) = c_i$ for all $1 \leq i < j \leq s$.
            \item $\chi(x_i,x_j) = c_{j-1}$ for all $1 \leq i < j \leq s$.
        \end{enumerate}
    \end{definition}

    The canonical Ramsey theorem~\cite{ER} states that for every $s$, there is $N = N(s)$ such that in every edge-coloring of $K_N$ (with any number of colors), there is a clique of size $s$ which is either monochromatic, rainbow or lexicographic. It is natural to restrict the number of colors to say $n$, and ask how large $N$ should be to guarantee a lexicographic $s$-clique. It is not hard to see that $N = 1+\sum_{i=0}^{s-2}n^i = O(n^{s-2})$ vertices are enough for $n$ colors. Indeed, the first vertex has degree at least $1+\sum_{i=0}^{s-3}n^i$ in one of the $n$ colors, and one can then apply induction in its neighbourhood. 
    The question becomes much more interesting, however, if we also insist that the colors are ordered, namely, that $c_1 \geq \dots \geq c_{s-1}$ in \Cref{def:lexicographic}. In this case we say that the lexicographic set is {\em non-increasing}. The following theorem shows that polynomially many vertices are still enough to find such a set. This can be viewed as an ordered version of the canonical Ramsey theorem.
    
    \begin{theorem}\label{thm:lexicographic}
        For every $s \geq 2$, there is a constant $C = C(s)$ such that every $\chi : \binom{N}{2} \rightarrow [n]$, $N = C n^C$, admits a lexicographic non-increasing set of size $s$. 
    \end{theorem}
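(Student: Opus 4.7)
The plan is to prove \Cref{thm:lexicographic} by induction on $s$, with the base case $s = 2$ trivial. Fix $s \geq 3$, assume the statement for $s - 1$, and let $\chi : \binom{[N]}{2} \to [n]$ be an arbitrary coloring with $N = C_s\, n^{C_s}$ for a suitably chosen constant $C_s$. First I would build a long left-lex case-$1$ sub-structure via an iterated majority-color greedy: starting from $V_0 = [N]$, at step $i$ take $v_i = \min V_{i-1}$, pick a majority outgoing color $\phi_i \in [n]$, and set $V_i = \{u \in V_{i-1} : u > v_i,\ \chi(v_i, u) = \phi_i\}$, so that $|V_i| \geq (|V_{i-1}| - 1)/n$. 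After $L := (s-1)(s-2) + 2$ iterations this produces vertices $v_1 < \cdots < v_L$ with color sequence $(\phi_1, \ldots, \phi_{L-1})$ satisfying $\chi(v_i, v_j) = \phi_i$ for all $i < j$, together with a nonempty follower set $V_L$ in which every $w$ satisfies $\chi(v_i, w) = \phi_i$ for all $i \leq L$.

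Next I would apply Dilworth's theorem to the color sequence, viewed as a poset in which $i \prec j$ iff $i < j$ and $\phi_i < \phi_j$: either there is an antichain (equivalently, a non-increasing subsequence) of length $s - 1$, in which case the corresponding $s - 1$ vertices together with any extra vertex from $V_L$ form a non-increasing case-$1$ lex set of size $s$, finishing immediately; or the sequence decomposes into at most $s - 2$ chains, one of which yields indices $j_1 < \cdots < j_{s-1}$ with $\phi_{j_1} < \cdots < \phi_{j_{s-1}}$ strictly increasing. Write $c^* := \phi_{j_{s-1}}$ for the largest such color.

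The hard part will be this strictly-increasing case. Since every $w \in V_L$ satisfies $\chi(v_{j_{s-1}}, w) = c^*$, it suffices to find inside $V_L$ a non-increasing case-$1$ lex set of size $s - 1$ with first color at most $c^*$: prepending $v_{j_{s-1}}$ then gives the desired set of size $s$. To secure this alignment, I plan a secondary case-split inside $V_L$: if a sufficiently large induced subset of $V_L$ has all internal edges of color greater than $c^*$, then this subset is effectively colored with only $n - c^*$ colors and induction on the number of colors (applied via \Cref{thm:lexicographic} itself with the smaller palette) produces a non-increasing lex set of size $s$ directly inside it; otherwise the low-colored edges are sufficiently dense inside $V_L$ that running the greedy in $V_L$ restricted to colors $\leq c^*$ and invoking the inductive hypothesis for $s - 1$ yields the required inner structure. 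Combining these two cases leads to a recurrence of roughly the form $f(s, n) \leq c_s\, n^L + c_s \cdot f(s, n - 1)$, where $f(s, n)$ is the minimum $N$ in the statement, which resolves to $N \leq C(s)\, n^{C(s)}$.

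The principal technical obstacle is precisely this alignment of the ``first color $\leq c^*$'' constraint with the inner inductive call. Naive induction on $s$ alone is blind to it, so the bookkeeping must combine simultaneous induction on $s$ and on $n$; likely, running a symmetric right-greedy from the largest vertex of $V_L$ to build a dual right-lex case-$2$ chain, and splicing it with the outer strictly increasing chain to form a non-increasing case-$2$ lex set of size $s$, is what is needed to cover the remaining situation in which the inner structure returned by induction has an incompatible first color.
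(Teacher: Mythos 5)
Your Case A (a non-increasing subsequence among the greedy colors $\phi_i$) is fine and yields a forward lexicographic non-increasing set directly. The genuine gap is in Case B, and it is exactly the obstacle you flag yourself: the dichotomy you propose inside $V_L$ does not close with polynomial bounds. To recurse on $n-1$ colors you need a subset of $V_L$ of polynomial size all of whose internal edges have color $>c^*$, while to prepend $v_{j_{s-1}}$ you need the inner inductive call to return a \emph{forward} lexicographic set with top color $\leq c^*$, which you can only guarantee by applying the hypothesis for $s-1$ inside a subset all of whose edges have color $\leq c^*$. Forcing one of these two cliques in the low/high $2$-coloring of $V_L$ requires $|V_L|$ to be at least a Ramsey number of two polynomially large quantities, which is exponential. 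Your fallback --- replacing the low clique by density of low edges and re-running the greedy restricted to colors $\leq c^*$ --- does not repair this: the negation of ``some large subset is all-high'' only bounds the independence number of the low graph, giving average degree but not the out-degree condition the greedy needs at the current minimum vertex; and even when the restricted greedy succeeds, the colors $\delta_1,\dots,\delta_{s-2}\leq c^*$ it produces may again be strictly increasing, returning you to the starting situation with no induction parameter decreased. The speculative splice with a right-greedy chain also fails as stated: your outer increasing chain satisfies $\chi(v_{j_a},v_{j_b})=\phi_{j_a}$ (color determined by the left endpoint, i.e.\ forward-lex), and cannot be concatenated with a backward-lex chain (color determined by the right endpoint) to produce either type of lexicographic set.

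The paper resolves precisely this alignment problem by a different route. It first proves (\cref{thm:non-increasing set}) that polynomially many vertices force a \emph{non-increasing set} of size $2^{2s-3}$ in the sense of \cref{def:non-increasing}, via the auxiliary structures $H_{s,t}$ and the recursion of \cref{lem:induction}; there, the pigeonhole over the color tuple $(\alpha_1,\dots,\alpha_{s-2},\beta)$ combined with a \emph{second} application of the inductive hypothesis inside the common extension set $X$ is the mechanism that controls how newly found colors compare with old ones (the two cases $\chi(v_1,v_2)\leq\beta$ and $\chi(v_1,v_2)\geq\beta$ are both usable). It then extracts a lexicographic set of size $s$ from any non-increasing set of size $2^{2s-3}$ by a purely structural argument (\cref{prop:lexicographic}, via \cref{lem:weakly lexicographic 1,lem:weakly lexicographic 2}), and the required monotonicity $c_1\geq\dots\geq c_{s-1}$ comes for free because the ambient set is non-increasing. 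Some analogue of that ``find many aligned copies, then reapply the hypothesis in their common neighborhood'' step is what your Case B is missing.
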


\noindent
The idea of using {\em non-increasing sets} to bound 
$R_{<}(K_s^{(3)},P_n^{(3)})$ is due to Mubayi and Suk~\cite{MS_tightPath}, see \Cref{sec:3-uniform} for the details. 
    
\vspace{0.15cm}   
The rest of this paper is organized as follows. \Cref{thm:3-uniform,thm:lexicographic} are proved in \Cref{sec:3-uniform}. \Cref{thm:Kn Pn^t,thm:P^t} are proved in \Cref{sec:P_n^t}. The proof of \Cref{thm:clique vs power-path} is given in \Cref{sec:K_{s+1}}, and the last section contains some concluding remarks and open problems. 

\paragraph{Notation:} For two subsets of vertices $A,B$ in an ordered graph, we write $A < B$ if  $a < b$ for all $a \in A, b \in B$. 

\section{Proof of \Cref{thm:3-uniform,thm:lexicographic}}
\label{sec:3-uniform}

Throughout this section, we consider functions $\chi : \binom{N}{2} \rightarrow [n]$. We now define the notion of a {\em non-increasing set}, which plays a key role throughout this section. 

  \begin{definition}\label{def:non-increasing}
        Let $\chi : \binom{N}{2} \rightarrow [n]$. A triple $x<y<z$ is called {\em non-increasing} if $\chi(x,y) \geq \chi(y,z)$, and moreover $\chi(x,z) = \chi(x,y)$ or $\chi(x,z) = \nolinebreak \chi(y,z)$.
        A set $x_1 < \dots < x_s$ is called non-increasing if every triple $x_i,x_j,x_k$, $1 \leq i < j < k \leq s$, is non-increasing.
    \end{definition}

    Let $g(n,s)$ be the minimum $N$ such that in every coloring $\chi : \binom{N}{2} \rightarrow [n]$, there is a non-increasing set of size $s$. 
    The following was observed by Mubayi and Suk~\cite{MS_tightPath} and was their motivation for studying non-increasing sets. For completeness, we include the proof. 
    \begin{proposition}[\cite{MS_tightPath}]\label{prop:weakly non-increasing}
        $R_{<}(K_s^{(3)},P^{(3)}_n) \leq g(n-2,s)$.
    \end{proposition}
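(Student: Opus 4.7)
The plan is to reduce the $3$-uniform Ramsey problem to the non-increasing set problem on an auxiliary edge coloring that records the length of the longest blue monotone tight path ending at each pair. Given a red/blue coloring of the complete $3$-uniform hypergraph on $[N]$ with $N = g(n-2, s)$, I would assume for contradiction that no blue copy of $P_n^{(3)}$ is present, and produce a red $K_s^{(3)}$.

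For each pair $i < j$ in $[N]$, let $f(i,j)$ be the largest $\ell$ such that there exists a monotone blue tight path $v_1 < v_2 < \cdots < v_\ell$ with $v_{\ell-1} = i$ and $v_\ell = j$. Taking $\ell = 2$ (the two-vertex, edgeless ``path'' on $\{i,j\}$) always works, so $f(i,j) \geq 2$. Since no blue $P_n^{(3)}$ exists, we also have $f(i,j) \leq n-1$. Thus $f$ takes values in a set of size $n-2$, and I define $\chi(i,j) := f(i,j) - 1 \in [n-2]$. By the definition of $g(n-2, s)$, this coloring $\chi$ admits a non-increasing set $x_1 < x_2 < \cdots < x_s$ in the sense of \Cref{def:non-increasing}.

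The final step is to verify that every triple $x_i < x_j < x_k$ from this set is red in the original $3$-uniform coloring, which yields the desired red $K_s^{(3)}$. Suppose toward a contradiction that $(x_i, x_j, x_k)$ is blue. Then a blue tight path of length $f(x_i, x_j)$ ending at the pair $(x_i, x_j)$ can be extended by appending $x_k$, with the blue triple $(x_i, x_j, x_k)$ serving as the new last hyperedge. This gives $f(x_j, x_k) \geq f(x_i, x_j) + 1$, equivalently $\chi(x_j, x_k) > \chi(x_i, x_j)$, contradicting the inequality $\chi(x_i, x_j) \geq \chi(x_j, x_k)$ required by \Cref{def:non-increasing}. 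Hence $\{x_1,\ldots,x_s\}$ spans a red clique in the $3$-uniform hypergraph.

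I do not expect a real obstacle here; the proof is essentially a two-line reduction, and all the content lies in having previously introduced the notion of a non-increasing set together with the parameter $g(n-2, s)$. The one bookkeeping point to get right is the range of $f$, which must have exactly $n-2$ possible values so that $\chi$ can legitimately be treated as an $[n-2]$-coloring. Notice that only the first part of \Cref{def:non-increasing} (the inequality $\chi(x,y) \geq \chi(y,z)$) is invoked by this argument; the stronger lexicographic condition is what makes a matching lower bound on $g$ plausible, but it plays no role in this particular reduction.
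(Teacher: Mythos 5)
Your proposal is correct and is essentially identical to the paper's proof: the same auxiliary coloring by the length of the longest blue tight path ending at a pair, the same extension argument showing a blue triple forces a strict increase, and the same appeal to the definition of $g(n-2,s)$ (the paper simply observes there are $n-2$ possible values rather than explicitly shifting to $[n-2]$). Your closing remark that only the inequality $\chi(x,y)\geq\chi(y,z)$ is used also matches the remark following the proposition in the paper.
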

    \begin{proof}
        Fix a red/blue coloring of $K_N^{(3)}$, $N = g(n-2,s)$, and suppose that there is no blue monotone tight path with $n$ vertices. For each pair of vertices $x<y$, let $\chi(x,y)$ be the largest number of vertices in a blue monotone tight path ending at $x,y$. 
        So $2 \leq \chi(x,y) \leq n-1$. 
        Observe that if $x < y < z$ with $xyz$ blue, then $\chi(x,y) < \chi(y,z)$, because we can extend any longest path ending at $x,y$ with the edge $xyz$. Hence, a non-increasing set must be a red clique. 
        Observe that there are $n-2$ possible values of $\chi(x,y)$'s.
        By the definition of $g(n-2,s)$, there is a red clique of size $s$. 
    \end{proof}

    \vspace{0.1cm}

    \begin{remark}
        We note that the proof of \Cref{prop:weakly non-increasing} only uses that a non-increasing triple $x,y,z$ satisfies $\chi(x,y) \geq \chi(y,z)$, but does not use the full definition of a non-increasing triple. 
        Thus, one can replace $g(n-2,s)$ with the analogous function corresponding to this weaker notion of being non-increasing (only requiring $\chi(x,y) \geq \chi(y,z)$), and potentially get stronger bounds on $R_{<}(K_s^{(3)},P^{(3)}_n)$ via \Cref{prop:weakly non-increasing}. Still, we decided to stick to the stronger notion given in \Cref{def:non-increasing}, because we do not know of better bounds for the weaker notion of $g$-function (for general $s$), and also because the stronger notion is needed to prove \Cref{thm:lexicographic}. 
    \end{remark}

    \vspace{0.2cm}
    
    \noindent
    We will now prove the following theorem, which together with \Cref{prop:weakly non-increasing} implies \Cref{thm:3-uniform}.
    \begin{theorem}\label{thm:non-increasing set}
        For every $s \geq 2$, there is a constant $C = C(s)$ such that $g(n,s) \leq C \cdot n^C$.
    \end{theorem}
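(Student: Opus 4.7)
The plan is to prove Theorem~\ref{thm:non-increasing set} by induction on $s$, with the trivial base case $s = 2$ (where $g(n,2) = 2$). For the inductive step, I would aim to establish a recursion of the form $g(n, s) \leq p_s(n) \cdot g(n, s-1)$ with $p_s$ a fixed polynomial; iterating this $s-2$ times and using the base case yields $g(n, s) \leq C(s) n^{C(s)}$ as required.

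The central mechanism is to build a non-increasing $s$-set by \emph{prepending} a vertex $v$ to a non-increasing $(s-1)$-set that lives inside a suitable forward neighborhood of $v$. Explicitly, I would look for a vertex $v$ and a color $c^* \in [n]$ satisfying (i) the forward neighborhood $F^+_{v, c^*} := \{u > v : \chi(v,u) = c^*\}$ has size at least $M := g(n, s-1)$, and (ii) every edge inside $F^+_{v, c^*}$ has color at most $c^*$. Given such $v$ and $c^*$, applying the induction hypothesis to the coloring on $F^+_{v, c^*}$ produces a non-increasing $(s-1)$-set $y_1 < \dots < y_{s-1}$, and then $\{v, y_1, \dots, y_{s-1}\}$ is non-increasing: for every triple $(v, y_i, y_j)$ one has $\chi(v, y_i) = \chi(v, y_j) = c^* \geq \chi(y_i, y_j)$, which verifies both clauses of Definition~\ref{def:non-increasing}. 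Condition (ii) is essential here: without it, the $(s-1)$-set produced by induction might use a color greater than $c^*$, which would break the first clause of the definition for triples $(v, y_i, y_j)$.

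To find such a pair $(v, c^*)$ in only polynomially many vertices, I would iterate over the colors from $n$ downwards, maintaining a ``living'' subset $S$ in which all edges use only colors up to the current level. At the top ($c = n$, $S = [N]$), either some $v \in S$ has $|F^+_{v, n} \cap S| \geq M$ (and we win, since condition (ii) is automatic with $c^* = n$ on $S$), or the color-$n$ graph on $S$ has forward out-degree strictly less than $M$; in the latter case, since the natural vertex order gives an acyclic orientation with bounded out-degree, the color-$n$ graph is $(M-1)$-degenerate on $S$ and hence admits an independent set of size $|S|/M$, which becomes the new $S$. We then decrement $n$ and repeat.

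The main obstacle is controlling the total size loss across the $n$ color levels: this naive peeling shrinks $S$ by a factor of $M$ per level, so $n$ levels yield an exponential blow-up $M^n$, reproducing only the quasipolynomial bound of Mubayi--Suk. To obtain the target polynomial recursion, the plan is to replace the level-by-level peeling with a \emph{single} pigeonhole that simultaneously controls all colors. For instance, one classifies each vertex $v$ by the maximum color $c$ for which $|F^+_{v, c}|$ exceeds a chosen threshold $T$ (of order $n^{O(1)} M$), and passes to a large subset on which this top color is a common value; within such a subset, a degeneracy argument applied in one go to all ``higher'' colors (each of which is now individually bounded by $T$) produces the required independent set of size $M$ inside the forward neighborhood, with a total size loss of only a polynomial factor $n^{O(1)}$. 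Turning this simultaneous extraction into a rigorous single step, rather than paying multiplicatively per color, is the technical crux of the argument.
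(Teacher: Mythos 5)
Your prepending framework is sound: if $\chi(v,y_i)=c^*$ for all $i$ and every edge of the non-increasing $(s-1)$-set has color at most $c^*$, then each triple $(v,y_i,y_j)$ satisfies both clauses of \Cref{def:non-increasing}, so the augmented set is non-increasing. The gap is exactly where you place it, and it is a real one: the ``single pigeonhole plus simultaneous degeneracy'' step does not work as sketched. Concretely, suppose you classify each $v$ by the largest color $c_0=c(v)$ with $|F^+_{v,c_0}|\geq T$ and pass to a set $S$ where $c(v)\equiv c_0$; the graph of colors exceeding $c_0$ on $S$ is indeed $O(nT)$-degenerate, so you can extract an independent set $S'$ with $|S'|\geq |S|/O(nT)$ in which all edges have color at most $c_0$. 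But the guarantee $|F^+_{v,c_0}|\geq T$ was computed in the ambient vertex set, and the independent-set extraction is adversarial with respect to it: nothing prevents $F^+_{v,c_0}\cap S'=\emptyset$ for every $v\in S'$. Conversely, if you instead fix $v$ and work inside $W=F^+_{v,c_0}$, you control only $v$'s out-degrees, not those of the vertices of $W$, so you cannot bound the degeneracy of the high-color graph on $W$ without first intersecting $W$ with $S$ --- and that intersection may again be empty. Any attempt to restore both properties simultaneously seems to force a fresh pigeonhole per color level, which is precisely the $M^{\Theta(n)}$ loss that reproduces the Mubayi--Suk quasipolynomial bound. So the recursion $g(n,s)\leq p_s(n)\,g(n,s-1)$ is not established.

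For comparison, the paper escapes this trap by changing the induction variable rather than the extraction method. It strengthens the target to a structure $H_{s,t}$: a non-increasing $s$-set followed by a monotone $t$-vertex path whose edge colors are non-increasing and bounded by the clique's last color. The recursion (\Cref{lem:induction}) trades clique size for path length, $f(n;s,t)\leq f(n;s-1,t+1)^{s+t}\cdot n^{s-1}$, and is proved by supersaturation: one counts copies of $H_{s-1,t+1}$, pigeonholes on the $s-1$ colors incident to a distinguished vertex, finds a large common link $X$, locates a second copy of $H_{s-1,t+1}$ inside $X$, and finishes by a two-case comparison of the new copy's first path-edge color with the pigeonholed color $\beta$. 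The trailing path with controlled colors is exactly what substitutes for your condition (ii); it gives the induction a handle on the top color of the structure found inside a link, which is the information your approach is missing. If you want to salvage your route, you would likely need a similarly strengthened induction hypothesis that bounds the color $\chi(y_1,y_2)$ (which, as one checks, is the maximum color of a non-increasing set) of the set produced by the recursive call.
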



To prove \Cref{thm:non-increasing set}, we need to find a non-increasing set of size $s$. 
The proof is via induction on $s$, and to this end, it turns out to be convenient to find the following bigger structure.
For $s \geq 2, t \geq 1$, let $H_{s,t}$ be the ordered graph with vertices $x_1 < \dots < x_s = y_1 < \dots < y_t$ such that $\{x_1,\dots,x_s\}$ is non-increasing,  $(y_1,\dots,y_t)$ is a path, and $\chi(x_{s-1},x_s) \geq \chi(y_1,y_2) \geq \dots \geq \chi(y_{t-1},y_t)$.  
Let $f(n; s,t)$ be the minimum $N$ such that every $n$-coloring $\chi$ of the edges of $K_N$ admits a copy of $H_{s,t}$.
As $H_{s,1}$ is just a non-increasing $s$-set, it holds that $f(n;s,1) = g(n,s)$. 
The following gives a recursive bound on $f(n;s,t)$.

\begin{lemma}\label{lem:induction}
    For $s \geq 3, t \geq 1$, it holds that $f(n;s,t) \leq f(n;s-1,t+1)^{s+t} \cdot n^{s-1}$. 
\end{lemma}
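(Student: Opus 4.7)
The plan is to construct a copy of $H_{s,t}$ in two phases: first I build the non-increasing prefix $x_1 < \cdots < x_{s-1}$ by an iterative restricted pigeonhole, maintaining a large residual set uniformly colored from each $x_j$; second I apply the induction hypothesis for $H_{s-1,t+1}$ inside that residual set and ``steal'' one vertex from its path to serve as $x_s$, leaving a path of length $t$ starting at $x_s$.

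Let $m := f(n;s-1,t+1)$ and $N := m^{s+t}n^{s-1}$, and fix $\chi:\binom{[N]}{2}\to[n]$. In Phase~1 I iteratively produce $x_1 < \cdots < x_{s-1}$, colors $c_1 \geq c_2 \geq \cdots \geq c_{s-1}$, and a decreasing chain $V_0 = [N] \supseteq V_1 \supseteq \cdots \supseteq V_{s-1}$ with $V_i \subseteq (x_i,N]$ and $\chi(x_j,v) = c_j$ for all $j \leq i$ and $v \in V_i$. At stage~$i$, I select $x_i$ and then pigeonhole the edges from $x_i$ into $V_{i-1}$ over the restricted palette $\{1,\dots,c_{i-1}\}$ (size $\leq n$) to extract $c_i \leq c_{i-1}$ and a new $V_i$. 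The delicate point is that $x_i$ must possess many edges of color $\leq c_{i-1}$ into $V_{i-1}$; I secure such a good $x_i$ by choosing it from a sub-block on which an auxiliary invocation of the induction hypothesis (for $H_{s-1,t+1}$) certifies the requisite color structure, costing a factor of roughly $m$ per stage on top of the pigeonhole factor $n$. After $s-1$ stages this leaves $|V_{s-1}| \geq m^{t+1}$.

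In Phase~2, I apply the induction hypothesis within $V_{s-1}$, restricted to the colors $\{1,\dots,c_{s-1}\}$ (which is where the extra slack $m^{t+1}$ is consumed, guaranteeing enough room for an inductive call in a smaller color palette). This yields $H_{s-1,t+1}$ with prefix $u_1 < \cdots < u_{s-1}$ and path $w_1 = u_{s-1} < w_2 < \cdots < w_{t+1}$, where all colors appearing on this copy are $\leq c_{s-1}$. I then set $x_s := w_2$ and $y_i := w_{i+1}$ for $1 \leq i \leq t$. Verification: triples among $x_1,\dots,x_{s-1}$ are non-increasing by the chain $c_1 \geq \cdots \geq c_{s-1}$ together with $\chi(x_i,x_j) = c_i$; for any triple $x_i,x_j,x_s$ with $i<j\leq s-1$ one has $\chi(x_i,x_j) = \chi(x_i,x_s) = c_i$ and $\chi(x_j,x_s) = c_j$, so the inequality $c_i \geq c_j$ closes the case. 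The path colors $\chi(y_i,y_{i+1})$ are non-increasing directly from the $H_{s-1,t+1}$ structure, and $\chi(x_{s-1},x_s) = c_{s-1} \geq \chi(y_1,y_2) = \chi(w_2,w_3)$ by the color restriction chosen in Phase~2.

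The main obstacle is Phase~1: sustaining the non-increasing color sequence $c_1 \geq \cdots \geq c_{s-1}$ under restricted pigeonhole, which forces an auxiliary recursive ``certification'' of each candidate $x_i$ rather than an arbitrary choice. Balancing the pigeonhole loss ($n$ per stage, totalling $n^{s-1}$) against the certification loss and the color-restricted Phase~2 call (which together contribute $m^{s+t}$) is precisely what produces the claimed bound $m^{s+t}n^{s-1}$.
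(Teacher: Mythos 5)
There is a genuine gap, and it occurs at exactly the point you flag as ``delicate.'' In Phase~1 you need, at stage $i$, a vertex $x_i \in V_{i-1}$ with many edges of color $\leq c_{i-1}$ into $V_{i-1}$. No such vertex need exist: the only property $V_{i-1}$ has is that edges \emph{from} $x_1,\dots,x_{i-1}$ into it are monochromatic; the edges \emph{inside} $V_{i-1}$ are completely unconstrained and could all carry colors strictly larger than $c_{i-1}$ (e.g.\ $c_{i-1}=1$ and every internal edge has color $n$). Your proposed fix --- an ``auxiliary invocation of the induction hypothesis that certifies the requisite color structure'' --- does not address this: a copy of $H_{s-1,t+1}$ found inside $V_{i-1}$ comes with its own non-increasing color sequence, but nothing ties those colors to the externally fixed threshold $c_{i-1}$. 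The same problem reappears, unrepaired, in Phase~2: you assert that the copy of $H_{s-1,t+1}$ found in $V_{s-1}$ ``restricted to the colors $\{1,\dots,c_{s-1}\}$'' uses only colors $\leq c_{s-1}$, but one cannot restrict a given coloring to a sub-palette; the internal edges of $V_{s-1}$, in particular $\chi(w_2,w_3)$, may exceed $c_{s-1}$, and then the required inequality $\chi(x_{s-1},x_s)\geq\chi(y_1,y_2)$ fails.

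This is precisely the obstacle the paper's proof is designed to sidestep, and it does so not by forcing the inner colors below the outer ones but by a case analysis that works either way. The paper finds many copies of $H_{s-1,t+1}$, pigeonholes so that a large family agrees on the colors $\alpha_1\geq\dots\geq\alpha_{s-2}\geq\beta$ at the pivot vertex (these are non-increasing automatically, by the structure of $H_{s-1,t+1}$, not by fiat), extracts a linking set $X$ of size $f(n;s-1,t+1)$ any element of which can play the pivot, and finds a fresh copy of $H_{s-1,t+1}$ inside $X$. If the fresh copy's path starts with color $\leq\beta$, the outer clique prefix is extended by two vertices of the fresh copy; if it starts with color $\geq\beta$, the fresh copy's entire $(s-1)$-clique is kept and closed up with the outer path. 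Without some mechanism of this kind --- handling both the ``new color is smaller'' and ``new color is larger'' outcomes --- your greedy construction cannot be completed, so the argument as proposed does not establish the recursion.
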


\begin{proof}
    Put $M = f(n;s-1,t+1)$ and $N = M^{s+t}n^{s-1}$, and fix a coloring $\chi: \binom{N}{2} \rightarrow [n]$. Put $h = s+t-1 = |V(H_{s-1,t+1})|$.
    By definition, every $M$ vertices contain a copy of $H_{s-1,t+1}$. By double counting, there are at least $\frac{\binom{N}{M}}{\binom{N-h}{M-h}} = \frac{\binom{N}{h}}{\binom{M}{h}} \geq (N/M)^h$ copies of $H_{s-1,t+1}$. 
    By the pigeonhole principle, there is a choice of colors $\alpha_1,\dots,\alpha_{s-2},\beta \in [n]$, and a set $\mathcal{H}$ of at least $\frac{N^h}{M^h n^{s-1}}$ copies $(x_1,\dots,x_{s-1}=y_1,\dots,y_{t+1})$ of $H_{s-1,t+1}$, with the property that $\chi(x_i,x_{s-1}) = \alpha_i$ for every $1\leq i\leq s-2$, and $\chi(y_1,y_2) = \beta$. 
    By the definition of $H_{s-1,t+1}$, $\chi(x_{s-2},x_{s-1})\geq\chi(y_1,y_2)$ and the triple $x_i,x_{i+1},x_{s-1}$ is non-increasing for all $1 \le i \le s-3$.
    Thus, $\alpha_1 \geq \alpha_2 \geq \dots \geq \alpha_{s-2}\geq\beta$.
    By averaging, there is a choice of $x_1,\dots,x_{s-2},y_2,\dots,y_{t+1}$ and a set $X$ of size $|X| \geq \frac{N}{M^h n^{s-1}} = M$, such that $(x_1,\dots,x_{s-2},x,y_2,\dots,y_{t+1}) \in \mathcal{H}$ for all $x \in X$. By the definition of $M$, $X$ contains a copy of $H_{s-1,t+1}$, say on the vertices $u_1,\dots,u_{s-1}=v_1,\dots,v_{t+1}$. We now consider two cases:
    
    \paragraph{Case 1:} $\chi(v_1,v_2) \leq \beta$. 
    Then, we have $\alpha_{s-2} \geq \beta \geq \chi(v_1,v_2) \geq \dots \geq \chi(v_t,v_{t+1})$. 
    We claim that $x_1,\dots,x_{s-2},v_1,v_2,v_3,\dots,v_{t+1}$ form a copy of $H_{s,t}$ with $s$-clique $\{x_1,\dots,x_{s-2},v_1,v_2\}$ and path $v_2,\dots,v_{t+1}$. 
    It suffices to show that $\{x_1,\dots,x_{s-2},v_1,v_2\}$ is non-increasing. 
    For convenience, write $x_{s-1} := v_1$ and $x_s := v_2$. Let $1 \leq i < j < k \leq s$. If $(j,k) \neq (s-1,s)$, then 
    $x_i,x_j,x_k$ is non-increasing because 
    $\{x_1,\dots,x_{s-2},v_1\}$ and $\{x_1,\dots,x_{s-2},v_2\}$ are non-increasing, as $v_1,v_2 \in X$. 
    And for $(j,k) = (s-1,s)$, we have $\chi(x_i,x_{s-1}) = \chi(x_i,x_s) = \alpha_i$ and $\chi(x_{s-1},x_s) = \chi(v_1,v_2) \leq \beta \leq\alpha_{s-2} \leq \alpha_{i}$, meaning that $x_i,x_{s-1},x_s$ is non-increasing. 
    
    \paragraph{Case 2:} $\chi(v_1,v_2) \geq \beta$. 
    Then $\chi(u_{s-2},u_{s-1}) \geq \chi(v_1,v_2) \geq \beta$, where the first inequality is by the definition of $H_{s-1,t+1}$. Also, 
    $\chi(u_i,y_2) = \beta$ for all $1 \leq i \leq s-1$ because $u_1,\dots,u_{s-1} \in X$.
    We claim that $u_1,\dots,u_{s-1},y_2,y_3,\dots,y_{t+1}$ make a copy of $H_{s,t}$ with $s$-clique $\{u_1,\dots,u_{s-1},y_2\}$ and path $y_2,\dots,y_{t+1}$. 
    First, note that $\chi(u_{s-1},y_2) = \beta \geq \chi(y_2,y_3) \geq \dots \geq \chi(y_t,y_{t+1})$. 
    So it remains to check that $\{u_1,\dots,u_{s-1},y_2\}$ is non-increasing. 
    For convenience, write $u_s := y_2$. 
    Let $1 \leq i < j < k \leq s$. 
    If $k \leq s-1$, then the triple $u_i,u_j,u_k$ is non-increasing because 
    $\{u_1,\dots,u_{s-1}\}$ is non-increasing (by the definition of $H_{s-1,t+1}$). 
    And for $k=s$, we have 
    $\chi(u_i,u_s) = \chi(u_j,u_s) = \beta$ and 
    $\chi(u_i,u_j) \geq \chi(u_i,u_{s-1}) \geq \chi(u_{s-2},u_{s-1}) \geq \beta$, where the first two inequalities use that the triples $\{u_i,u_j,u_{s-1}\}$ and $\{u_i,u_{s-2},u_{s-1}\}$ are non-increasing. This completes the proof of the lemma. 
\end{proof}
\noindent
The following theorem implies \Cref{thm:non-increasing set}, as $f(n;s,1) = g(n,s)$.
\begin{theorem}
    For every $s \geq 2, t \geq 1$, there is a constant $C = C(s,t)$ such that $f(n;s,t) \leq O_{s,t}(n^C)$. 
    Moreover, one can take 
    $
    C(s,t) = (s+t)^{s-1} - 3(s+t)^{s-2} + \sum_{i=0}^{s-3}(s-1-i)(s+t)^i.
    $
\end{theorem}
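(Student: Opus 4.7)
The plan is to induct on $s$, using \Cref{lem:induction} for the inductive step and handling the base case $s = 2$ separately; the base case is the main obstacle, since naive pigeonhole arguments on profiles $(L(v), c(v))$ tend to give exponent $t$ rather than the required $t - 1$. For the base case I will show $f(n; 2, t) \leq \binom{n + t - 1}{t - 1} + 1$. Fix $\chi : \binom{N}{2} \to [n]$ admitting no copy of $H_{2, t}$, and for each vertex $v$ define the profile $P(v, \cdot) : [n] \to \{0, 1, \dots, t - 1\}$, where $P(v, c)$ is the maximum number of edges in a non-increasing monotone path ending at $v$ whose last edge has color at least $c$ (by assumption the values stay below $t$). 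This function is weakly decreasing in $c$ and takes values in $\{0, \dots, t-1\}$, so it is determined by its $t - 1$ jump locations: put $\pi_k(v) := \min\{c \in [n+1] : P(v, c) < k\}$ for $k = 1, \dots, t - 1$, giving a weakly decreasing sequence in $[n+1]^{t-1}$.

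The key claim is that $v \mapsto (\pi_1(v), \dots, \pi_{t-1}(v))$ is injective. To prove it, let $u < v$ and set $c^* := \chi(u, v)$. Extending any optimal path realising $P(u, c^*)$ by the edge $(u, v)$ produces a non-increasing path ending at $v$ with last edge color $c^*$, so $P(v, c^*) \geq P(u, c^*) + 1$. Writing $k := P(u, c^*)$, this gives $\pi_{k+1}(u) \leq c^* < \pi_{k+1}(v)$, so the profiles differ in coordinate $k + 1$. Hence $N$ is at most the number of weakly decreasing sequences of length $t - 1$ in $[n+1]$, which is $\binom{n + t - 1}{t - 1} = O_t(n^{t-1})$.

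With the base case in hand, write $E(s, t)$ for the exponent of $n$ in the resulting upper bound on $f(n; s, t)$. \Cref{lem:induction} yields the recursion $E(s, t) \leq (s+t) \cdot E(s-1, t+1) + (s-1)$ for $s \geq 3$, while the base case gives $E(2, T) = T - 1$. Unrolling, I would obtain
$$
E(s, t) \leq (s+t)^{s-2}(s + t - 3) + \sum_{i=0}^{s-3}(s-1-i)(s+t)^i,
$$
and since $(s + t - 3)(s+t)^{s-2} = (s+t)^{s-1} - 3(s+t)^{s-2}$, this matches the claimed constant $C(s, t)$. The routine parts are the manipulation of the recursion and bookkeeping of constants; the conceptual heart of the argument is the encoding of $P(v, \cdot)$ by its jump locations, which is what shaves the exponent from $t$ down to $t - 1$ in the base case.
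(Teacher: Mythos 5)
Your proof is correct and follows essentially the same route as the paper: the identical recursion $C(s,t) = (s+t)\,C(s-1,t+1) + s-1$ from \Cref{lem:induction}, anchored at the same base-case bound $f(n;2,t) \leq \binom{n+t-1}{t-1}+1$. The only difference is that the paper gets the base case by citing the Chv\'atal--Koml\'os theorem, whereas you reprove it directly via the jump-location encoding (in effect a proof of the relevant half of that theorem); your argument there is sound, including the implicit point that the absence of $H_{2,t}$ forces $P(u,c^*) \leq t-2$, so the distinguishing coordinate $k+1$ always lies in $\{1,\dots,t-1\}$.
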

\begin{proof}
    The proof is by induction on $s$, starting with the base case $s=2$.
    Observe that $H_{2,t}$ is just a monotone path of length $t$ with non-increasing $\chi$-labels. 
    By a result of Chvatal and Koml\'os~\cite{ChvatalKomlos}, in every edge-coloring $\chi$ of an ordered $K_N$, $N > \binom{p+q-2}{p-1}$, there are vertices $y_1 < \dots < y_{p+1}$ with $\chi(y_1,y_2) \geq \chi(y_2,y_3) \geq \dots \geq \chi(y_p,y_{p+1})$ or vertices $y_1 < \dots < y_{q+1}$ with $\chi(y_1,y_2) < \chi(y_2,y_3) < \dots < \chi(y_q,y_{q+1})$.
    Apply this with $p=t, q=n+1$, assuming $N > \binom{n+t-1}{t-1}$. The second outcome is impossible because there are only $n$ colors. And the first outcome gives a copy of $H_{2,t}$. Hence, we can take $C(2,t) = t-1$ for all $t \geq 1$. It is easy to see that this coincides with the choice of $C(s,t)$ in the statement of the theorem. 
	
    Now let $s \geq 3$. 
    By \cref{lem:induction} and the induction hypothesis, we have
    $$
	f(n;s,t) \leq f(n;s-1,t+1)^{s+t} \cdot n^{s-1} \leq 
 O_{s,t}\!\left( n^{(s+t) \cdot C(s-1,t+1) + s-1} \right),
    $$
    so one can take $C(s,t) = (s+t) \cdot C(s-1,t+1) + s - 1$. It is easy to check that the choice of $C(s,t)$ in the statement of the theorem also satisfies this recursion. 
\end{proof}

Next we prove \Cref{thm:lexicographic}. This theorem
follows by combining \Cref{thm:non-increasing set} with the following proposition:
\begin{proposition}\label{prop:lexicographic}
    For $s \geq 3$, every non-increasing set of size $2^{2s-3}$ contains a lexicographic set of size $s$. 
\end{proposition}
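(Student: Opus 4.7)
The plan is to induct on $s$. The base case $s=3$ is immediate: by \Cref{def:non-increasing}, any non-increasing triple $v_i<v_j<v_k$ is itself a lexicographic $3$-set of type~$1$ or type~$2$ according as $\chi(v_i,v_k)$ equals $\chi(v_i,v_j)$ or $\chi(v_j,v_k)$.

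For the inductive step, given a non-increasing set $V=\{v_1,\dots,v_N\}$ with $N=2^{2s-3}$, the main engine is a ``one-sided reduction'' packaged into \Cref{def:non-increasing}: if a non-increasing triple $v<w_a<w_b$ has $\chi(v,w_a)>\chi(v,w_b)$, then $\chi(w_a,w_b)$ must equal $\chi(v,w_b)$. Consequently, if any vertex $v$ has at least $s-1$ distinct colours on its right-going edges, then picking one representative $w_a$ from each right-colour class of $v$ produces a type~$2$ lexicographic $s$-set $\{v,w_1,\dots,w_{s-1}\}$ with $c_{b-1}=\chi(v,w_b)$ automatic for all $a<b$. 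Dually, a vertex with $\geq s-1$ distinct left-colours gives a type~$1$ lexicographic $s$-set.

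So I may assume $v_1$ has at most $s-2$ right-colour classes and $v_N$ has at most $s-2$ left-colour classes. The chain $\chi(v_j,v_k)\leq\chi(v_i,v_k)\leq\chi(v_i,v_j)$ for $i<j<k$, which follows from \Cref{def:non-increasing} by repeated application, forces each of these partitions to be by intervals, so their common refinement on $\{v_2,\dots,v_{N-1}\}$ has at most $2s-5$ intervals. Pigeonhole gives an interval $W$ of size $\geq(N-2)/(2s-5)$ on which $\chi(v_1,\cdot)=\alpha$ and $\chi(\cdot,v_N)=\beta$ are both constant. Applying the induction to the non-increasing set $W$ yields a lexicographic $(s-1)$-set $T\subseteq W$, which I extend by exactly one endpoint of $V$: if $T$ is of type~$1$, prepending $v_1$ gives a type~$1$ lex $s$-set with leading colour $c_1=\alpha$; if $T$ is of type~$2$, appending $v_N$ gives a type~$2$ lex $s$-set with trailing colour $c_{s-1}=\beta$. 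Both extensions are forced by the constant colour on $W$ providing the single extra $c$-value that the lexicographic pattern needs.

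The hard part is the sharp quantitative bound. The common-refinement step above loses a factor of $2s-5$ per level, giving only the factorial-type recursion $f(s)\leq (2s-5)f(s-1)+O(1)$, whereas $2^{2s-3}=2\cdot 4^{s-2}$ demands a recursion of the form $f(s)\leq 4f(s-1)$. To save the extra linear-in-$s$ factor, I would strengthen the inductive hypothesis to a two-parameter invariant $f(p,q)$ (the minimum $N$ forcing either a type~$1$ lex $p$-set or a type~$2$ lex $q$-set). The one-sided reduction at $v_1$ now says that $v_1$ has at most $q-2$ right-colour classes, each of which is a non-increasing subset containing no type~$1$ lex $(p-1)$-set (by the extension argument above) and no type~$2$ lex $q$-set, giving $f(p,q)\leq 1+(q-2)f(p-1,q)$; symmetrically at $v_N$ one gets $f(p,q)\leq 1+(p-2)f(p,q-1)$. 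Alternating these one-sided moves, while tracking which type of extension is being used at each step, is what drives the recursion down to a constant per-step factor and recovers $f(s,s)\leq 2^{2s-3}$; making this alternation actually lose only a factor of $4$ (rather than the naive $(p-2)(q-2)$) is the crux of the argument.
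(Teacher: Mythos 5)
Your qualitative argument is sound: the base case, the ``one-sided reduction'' (if $\chi(v,w_a)\neq\chi(v,w_b)$ in a non-increasing triple $v<w_a<w_b$ then $\chi(w_a,w_b)=\chi(v,w_b)$), the resulting type~2 (resp.\ type~1) lexicographic set when $v_1$ (resp.\ $v_N$) sees many distinct colors, the interval structure of the color classes, and the endpoint-extension of the inductive lexicographic set are all correct. But the proposition asserts the specific bound $2^{2s-3}$, and your recursion $f(s)\leq(2s-5)f(s-1)+2$ does not deliver it (already for $s=5$ one needs $(2^{7}-2)/5\geq 2^{5}$, which fails); it yields a factorial-type bound. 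You acknowledge this and propose a two-parameter alternation, but you explicitly leave its ``crux'' --- losing only a constant factor per step rather than $(p-2)(q-2)$ --- unproved, and your own recursions $f(p,q)\leq 1+(q-2)f(p-1,q)$ still pay a factor growing with $s$ at each step. So the quantitative claim, which is the actual content of the statement, is not established.

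The missing idea is to exploit \emph{both} endpoints of the non-increasing set simultaneously rather than one at a time. For each intermediate $x_i$ in a non-increasing set $x_1<\dots<x_k$, the triple $x_1,x_i,x_k$ forces the \emph{binary} dichotomy $\chi(x_1,x_i)=\chi(x_1,x_k)$ or $\chi(x_i,x_k)=\chi(x_1,x_k)$; taking the majority side loses only a factor of $2$ per step and produces (by induction) a ``weakly lexicographic'' set (\Cref{def:weakly lexicographic}) of size $2s-2$ inside any non-increasing set of size $2^{2s-3}$ (\Cref{lem:weakly lexicographic 2}). A separate Erd\H{o}s--Szekeres-type induction (\Cref{lem:weakly lexicographic 1}) then extracts from any weakly lexicographic set of size $2s-2$ a forward or backward lexicographic set of size $s$. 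Your one-sided reduction is essentially the engine behind this second step, but without the factor-$2$ halving of the first step the exponential bound is out of reach.
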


\noindent
It remains to prove \Cref{prop:lexicographic}. To this end, we need the following recursive definition.
\begin{definition}[weakly lexicographic]\label{def:weakly lexicographic}
    A set of size two is {\em weakly lexicographic}. For $s \geq 3$, a 
    set $x_1 < \dots < x_s$ is {\em weakly lexicographic} if one of the following holds:
    \begin{enumerate}
        \item There is a color $c \in [n]$ such that $\chi(x_1,x_i) = c$ for all $2 \leq i \leq s$, and $x_2,\dots,x_s$ is weakly lexicographic. 
        \item There is a color $c \in [n]$ such that $\chi(x_i,x_s) = c$ for all $1 \leq i \leq s-1$, and $x_1,\dots,x_{s-1}$ is weakly lexicographic. 
    \end{enumerate}
\end{definition}
Let us say that a set $x_1,\dots,x_s$ is {\em forward} (resp. {\em backward}) lexicographic if it satisfies Item 1 (resp. Item 2) in \Cref{def:lexicographic}. 
\begin{lemma}\label{lem:weakly lexicographic 1}
Let $s,t \geq 2$. Every weakly lexicographic set of size $s+t-2$ contains a forward lexicographic set of size $s$ or a backward lexicographic set of size $t$.  
\end{lemma}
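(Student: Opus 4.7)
The plan is to exploit the recursive structure of \Cref{def:weakly lexicographic} by encoding a weakly lexicographic set $S$ of size $N$ as a sequence of $N-2$ binary decisions: each time the recursion is unpacked, we either ``peel the front'' (Item 1, call it an $F$-decision) or ``peel the back'' (Item 2, a $B$-decision). Observe that if $d_1 = F$, then the first vertex $x_1$ sends the same color to every other vertex in $S$, and if $d_1 = B$, then every vertex in $S$ sends the same color to the last vertex $x_N$. This is the only structural information we will need.

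Let $a$ and $b$ denote the number of $F$- and $B$-decisions in the sequence, so $a+b = N-2$. The key claim is the following strengthening of the lemma:

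\smallskip
\noindent\textbf{Main claim.} \emph{Any weakly lexicographic set $S$ whose decision sequence has $a$ many $F$'s and $b$ many $B$'s contains a forward lexicographic subset of size $a+2$ and a backward lexicographic subset of size $b+2$.}

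\smallskip
I would prove this claim by induction on $N = |S|$, with the base case $N=2$ being trivial. For the inductive step, suppose $d_1 = F$ (the case $d_1 = B$ is symmetric). Then $S \setminus \{x_1\}$ is weakly lexicographic with $a-1$ many $F$'s and $b$ many $B$'s, so by induction it contains a forward lexicographic subset $T_F$ of size $a+1$ and a backward lexicographic subset $T_B$ of size $b+2$. The set $T_B$ already does the job on the backward side. For the forward side, consider $\{x_1\} \cup T_F$: since $d_1 = F$ forces $\chi(x_1, x) = c_1$ for every $x \in T_F$, and $T_F$ is itself forward lexicographic, the set $\{x_1\} \cup T_F$ is forward lexicographic of size $a+2$, as desired.

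Once the main claim is established, the lemma follows immediately. Taking $N = s+t-2$ gives $a + b = s + t - 4$, so by pigeonhole either $a \geq s-2$ or $b \geq t-2$; in the first case we get a forward lexicographic set of size $a+2 \geq s$, and in the second a backward lexicographic set of size $b+2 \geq t$. I do not anticipate any serious obstacle here, as the recursive definition lines up perfectly with the inductive argument; the only subtlety is checking that appending $x_1$ (respectively $x_N$) to the inductively obtained subset preserves the forward (respectively backward) lexicographic structure, which is exactly what the peel operation encodes.
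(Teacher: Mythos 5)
Your proof is correct and follows essentially the same route as the paper's: both unwind the recursive definition of weak lexicographicity by peeling off the first or last vertex and using the key observation that prepending (resp.\ appending) the peeled vertex to a forward (resp.\ backward) lexicographic subset preserves that structure. The only cosmetic difference is that you prove a conjunctive strengthening (both a forward set of size $a+2$ and a backward set of size $b+2$) by induction on $|S|$ and then apply pigeonhole, whereas the paper inducts on $s+t$ directly with the disjunctive statement; the mechanics are identical.
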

\begin{proof}
    By induction on $s+t$. The base case is $s=2$ or $t=2$. This case is evident because every set of size $2$ is both forward and backward lexicographic. 
    Suppose now that $s,t \geq 3$. Let $x_1< \dots < x_{s+t-2}$ be a weakly lexicographic set. Suppose without loss of generality that Item 1 in \Cref{def:weakly lexicographic} holds. By induction, $x_2,\dots,x_{s+t-2}$ contains a forward lexicographic set of size $s-1$ or a backward lexicographic set of size $t$. 
    In the latter case, we are done; in the former case, by adding $x_1$ we get a forward lexicographic set of size $s$ (so again we are done).
\end{proof}
\begin{lemma}\label{lem:weakly lexicographic 2}
Every non-increasing set of size $2^{s-1}$ contains a weakly lexicographic set of size \nolinebreak $s$.
\end{lemma}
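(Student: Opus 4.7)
The plan is to prove \Cref{lem:weakly lexicographic 2} by induction on $s$, with the base case $s=2$ being immediate: $|X| = 2^{s-1} = 2$ and every $2$-element set is weakly lexicographic by \Cref{def:weakly lexicographic}.

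For the inductive step, let $X = \{x_1 < \dots < x_m\}$ be a non-increasing set with $m = 2^{s-1}$ and $s \geq 3$. The key step is to examine the color $c := \chi(x_1, x_m)$ of the ``spanning'' edge between the two endpoints, and apply the definition of a non-increasing triple to each $(x_1, x_j, x_m)$ with $1 < j < m$. This yields $c \in \{\chi(x_1, x_j), \chi(x_j, x_m)\}$, so every middle vertex lies in (at least) one of
$$ L := \{x_j : 1 < j < m,\ \chi(x_1, x_j) = c\} \cup \{x_m\}, \qquad R := \{x_j : 1 < j < m,\ \chi(x_j, x_m) = c\} \cup \{x_1\}. $$
A pigeonhole argument then gives $|L| + |R| \geq (m - 2) + 2 = m = 2^{s-1}$, so at least one of $L, R$ has cardinality at least $2^{s-2}$.

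To close the induction I would invoke the (immediate) hereditary property of being non-increasing: since \Cref{def:non-increasing} is a condition on every triple, any subset of $X$ is again non-increasing, so both $L$ and $R$ inherit the property. If $|L| \geq 2^{s-2}$, the induction hypothesis applied inside $L \subseteq \{x_2, \dots, x_m\}$ produces a weakly lexicographic set $y_1 < \dots < y_{s-1}$; then $x_1 < y_1 < \dots < y_{s-1}$ satisfies Item~1 of \Cref{def:weakly lexicographic}, because $\chi(x_1, y_i) = c$ for every $y_i \in L$. Symmetrically, if $|R| \geq 2^{s-2}$ we apply induction inside $R \subseteq \{x_1, \dots, x_{m-1}\}$ and append $x_m$ using Item~2.

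I do not foresee a serious obstacle: everything reduces to the triple condition in \Cref{def:non-increasing} and a clean halving. The only mildly delicate point is choosing $L$ and $R$ so that the pigeonhole gives exactly the power of $2$ that the induction requires, which is why the endpoints $x_m$ and $x_1$ are added to $L$ and $R$ respectively.
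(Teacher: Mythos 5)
Your proof is correct and follows essentially the same route as the paper's: apply the non-increasing condition to each triple $(x_1,x_j,x_m)$, pigeonhole the middle vertices into the two resulting classes, recurse on the larger class, and reattach the appropriate endpoint via Item~1 or Item~2 of the weakly lexicographic definition. The paper just phrases the case split as a ``without loss of generality'' rather than carrying both sets $L$ and $R$ explicitly.
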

\begin{proof}
By induction on $s$. 
For $s=2$ this is clear. 
Let $s \geq 3$ and let $x_1 < \dots < x_k$, be a non-increasing set with $k = 2^{s-1}$. 
For each $2 \leq i \leq k-1$, it holds that $\chi(x_1,x_i) = \chi(x_1,x_k)$ or $\chi(x_i,x_k) = \chi(x_1,x_k)$ because the triple $x_1,x_i,x_k$ is non-increasing. 
Suppose, without loss of generality, that at least $(k-2)/2= 2^{s-2}-1$ of the $2 \leq i \leq k-1$ satisfy $\chi(x_1,x_i) = \chi(x_1,x_k) =: c$. 
Let~$I$ be the set consisting of these $2 \leq i \leq k-1$ and the element $k$. Then $\chi(x_1,x_i) = c$ for all $i \in I$, and $|I| \geq 2^{s-2}$.
By induction, $\{x_i : i \in I\}$ contains a weakly lexicographic set of size $s-1$, which together with $x_1$ forms a weakly lexicographic sets of size $s$.  
\end{proof}
\noindent
\Cref{prop:lexicographic} follows by combining \Cref{lem:weakly lexicographic 1,lem:weakly lexicographic 2} (where we apply \Cref{lem:weakly lexicographic 1} with $s=t$).

\section{Proof of \Cref{thm:Kn Pn^t,thm:P^t}}\label{sec:P_n^t}

\begin{definition}[$t$-clique chain]
    In an ordered graph, a {\em $t$-clique chain} consists of $t$-cliques (cliques of size $t$) $X_1,\dots,X_m$ such that for every $1 \leq i \leq m-1$, $|X_i \cap X_{i+1}| = 1$ and the last element of $X_i$ is the first element of $X_{i+1}$.
\end{definition}
\noindent
See \cref{fig:t-clique chain} for an example of a clique chain.
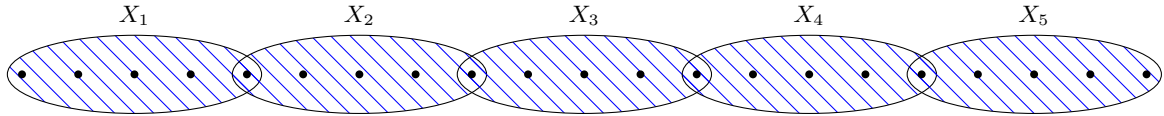
\begin{figure}
    \centering
    \begin{tikzpicture}[scale = 1.3]
        \foreach \i in {0,1,2,3,4} {
            \pgfmathtruncatemacro{\j}{\i+1};
            \coordinate (\i) at (\i*2.3, 0);
            \begin{scope}
                \clip (\i) ellipse (1.3 and 0.4);
                \foreach \x in {-10,...,1000}
                    \draw[xshift=\x*0.2 cm,blue]  (-2,2)--(2,-2);
            \end{scope}         
            \draw (\i) ellipse (1.3 and 0.4);
            \draw (\i*2.3,0.4) node[above] {\footnotesize $X_{\j}$};
        }
        \foreach \i in {-2,...,18} {
            \ifthenelse{\i=2 \OR \i=6 \OR \i=10 \OR \i=14} {
                \draw (\i*1.15/2,0) node[fill=black,circle,minimum size=3pt,inner sep=0pt] {};
            } {
                \draw (\i*1.15/2,0) node[fill=black,circle,minimum size=3pt,inner sep=0pt] {};
            }
        }
    \end{tikzpicture}
    \caption{An example of a 5-clique chain $X_1,\dots,X_5$.}
    \label{fig:t-clique chain}
\end{figure}

	\begin{lemma}\label{claim:clique_chain}
 For $n,m,t \geq 1$, the following holds:
 \begin{enumerate}
     \item Every red/blue edge-coloring of $K_N$, $N = (R(K_t,K_t) - 1) \cdot m^2 + 1$, contains a monochromatic $t$-clique chain with $m$ cliques.
     \item Every red/blue edge-coloring of $K_N$, 
     $N = (R(K_t,K_n) - 1) \cdot m + 1$, contains a blue $K_n$ or a red $t$-clique chain with $m$ cliques.
 \end{enumerate}
		
\end{lemma}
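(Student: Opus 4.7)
The plan is to adapt the classical Erd\H{o}s--Szekeres pigeonhole argument for monotone paths, replacing ``red/blue edge'' by ``red/blue $t$-clique''. For each vertex $v \in [N]$ let $f_R(v)$ (and, for Part~1, also $f_B(v)$) denote the length of the longest red (respectively blue) $t$-clique chain ending at $v$, with the convention that this quantity is $0$ if no such chain exists. If any of these values ever reaches $m$, we are done immediately, so we assume they all lie in $\{0,1,\ldots,m-1\}$.

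For Part~2, set $r:=R(K_t,K_n)$. Since $f_R\colon[N]\to\{0,1,\ldots,m-1\}$ takes only $m$ possible values while $N=(r-1)m+1$, the pigeonhole principle yields a set $S\subseteq[N]$ of size at least $r$ on which $f_R$ is constantly equal to some value $f^*$. By the definition of $r$, the restriction of the 2-coloring to $\binom{S}{2}$ contains either a blue $K_n$ (and we are done) or a red $K_t$ on vertices $v_1<v_2<\cdots<v_t$. In the latter case, fix a red $t$-clique chain of length $f^*$ ending at $v_1$ (the empty chain if $f^*=0$) and append $\{v_1,\ldots,v_t\}$; since its minimum element is $v_1$, which coincides with the previous chain's last vertex, this produces a valid red chain of length $f^*+1$ ending at $v_t$, contradicting $f_R(v_t)=f^*$.

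Part~1 is proved by the same strategy applied to the pair $(f_R(v),f_B(v))\in\{0,1,\ldots,m-1\}^2$, which attains at most $m^2$ values. With $r:=R(K_t,K_t)$ and $N=(r-1)m^2+1$, pigeonhole yields a set $S$ of at least $r$ vertices on which both $f_R$ and $f_B$ are constant. Ramsey's theorem produces a monochromatic $K_t$ inside $S$, and the same extension argument contradicts the constancy of the coordinate corresponding to its color.

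The only point that needs checking is that appending a red (or blue) $K_t$ whose minimum vertex equals the final vertex of an existing chain of the same color does indeed produce a valid chain of length one greater; this is immediate from the definition of a clique chain, which demands precisely that consecutive cliques share exactly one vertex, equal to the previous clique's last and the next clique's first. There is no real obstacle beyond spotting the right pigeonhole target --- $m$ values in Part~2 and $m^2$ values in Part~1 --- which explains the form of the bound on $N$ in each case.
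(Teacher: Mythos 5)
Your proposal is correct and follows essentially the same argument as the paper: define the longest-chain statistic(s) per vertex, pigeonhole over the $m$ (resp.\ $m^2$) possible values to get a set of size $R(K_t,K_n)$ (resp.\ $R(K_t,K_t)$) on which they are constant, and derive a contradiction by extending a longest chain through the monochromatic $t$-clique found there. No substantive differences.
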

\begin{proof}
    We first prove Item 1.
    Suppose that the assertion does not hold. For each vertex $v$, let $\chi_r(v)$ (resp. $\chi_b(v)$) be the largest number of $t$-cliques in a red (resp. blue) $t$-clique chain ending at $v$. 
    Then $0 \leq \chi_r(v),\chi_b(v) \leq m-1$ for all $v$. By the pigeonhole principle, there are values $0 \leq c_r,c_b \leq m-1$ and a set $U$ with $|U| \geq N/m^2 > R(K_t,K_t)-1$, such that $\chi_r(v) = c_r$ and $\chi_b(v) = c_b$ for all $v \in U$.
    As $|U| \geq R(K_t,K_t)$, there is a monochromatic $t$-clique $x_1<\dots<x_t$ in $U$. 
    Suppose without loss of generality that this clique is red. 
    Then $\chi_r(x_t) > \chi_r(x_1)$, because any longest red $t$-clique chain ending at $x_1$ can be extended using the $t$-clique $\{x_1,\dots,x_t\}$. 
    This is a contradiction to $\chi_r(x_1) = \chi_r(x_t) = c_r$.
  
    The proof of Item 2 is similar to that of Item 1. Suppose that the statement does not hold, and let $0 \leq \chi_r(v) \leq m-1$ be defined as above. By the pigeonhole principle, there is $0 \leq c_r \leq m-1$ and a vertex-set $U$, $|U| \geq N/m$, such that $\chi_r(v) = c_r$ for all $v \in U$. 
    We have $|U| \geq R(K_t,K_n)$ by the choice of $N$.
    If $U$ contains a blue $K_n$ then we are done, and else $U$ contains a red $t$-clique $x_1 < \dots < x_t$. 
    As in the previous item, we have $\chi_r(x_t) > \chi_r(x_1)$, in contradiction to $\chi_r(x_1) = \chi_r(x_t) = c_r$. 
\end{proof}

\begin{proof}[Proof of \Cref{thm:P^t}] 
    Put $R = R(K_t,K_t)$ and $N = 2R^{2t-1}n^{4t-3} \cdot R_<(K_t,P_n^t)$, and note that $N < (400t^3)^{t^2}n^{4t-2}$, using \Cref{thm:clique vs power-path} and $R(K_t,K_t) \leq 4^t$. 
    Fix a red/blue coloring of $K_N$, and suppose by contradiction that there is no monochromatic copy of $P_n^t$. 
    For each monochromatic $t$-clique $x_1,\dots,x_t$, let $\chi(x_1,\dots,x_t)$ be the largest $\ell$ such that there is a monochromatic $P_\ell^t$ which ends at $x_1,\dots,x_t$.
    Then $t \leq \chi(x_1,\dots,x_t) \leq n-1$. A {\em good pair} is a pair of monochromatic $t$-cliques $X,Y$ in the same color, such that the last element of $X$ is the first element of $Y$, and $\chi(X) \geq \chi(Y)$. 
    \begin{claim}\label{claim:good pairs}
        There are at least $\frac{N^{2t-1}}{R^{2t-1}n^{4t-4}}$ good pairs.
    \end{claim}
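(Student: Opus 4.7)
The plan is to first lower bound the total number of monochromatic chain pairs in $K_N$ using \Cref{claim:clique_chain}, and then extract many good ones among them via a pigeonhole on the $\chi$-values.

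For the first step, I would apply \Cref{claim:clique_chain} Item 1 with $m = 2$: every red/blue coloring of $K_{4R-3}$ contains a monochromatic chain pair, i.e.\ two monochromatic $t$-cliques of the same color with the last vertex of the first equal to the first vertex of the second. Each such chain pair consists of exactly $2t - 1$ vertices, so a standard double counting argument over $(4R-3)$-subsets of $[N]$ yields that the total number of monochromatic chain pairs in $K_N$ is at least
\[
\frac{\binom{N}{2t-1}}{\binom{4R-3}{2t-1}} \;\geq\; \frac{N^{2t-1}}{(4R)^{2t-1}}.
\]
Next, I would group these chain pairs by color and by the tuple $(\chi(X), \chi(Y)) \in [t,n-1]^2$, producing at most $2n^2$ classes. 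Pigeonhole then yields a single class of at least $N^{2t-1}/(2(4R)^{2t-1} n^2)$ chain pairs, all of the same color and all with common values $c_1 = \chi(X)$ and $c_2 = \chi(Y)$. If $c_1 \geq c_2$, every pair in the class is good, and this count already exceeds the target $N^{2t-1}/(R^{2t-1} n^{4t-4})$, since $n^{4t-4} \gg n^2$ for $t \geq 2$ and $n$ sufficiently large.

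The hard part will be the ``bad'' case $c_1 < c_2$, where the dominant class consists entirely of bad chain pairs. The idea I would pursue is to exploit the monochromatic path of length $c_2$ witnessing each $\chi(Y) = c_2$: reading off the $t$-clique of this path located $t-1$ positions before $Y = (v, y_2, \dots, y_t)$ yields a new monochromatic $t$-clique $W$ ending at the shared vertex $v$ with $\chi(W) \geq c_2 - t + 1$. Iterating this extraction and combining it with further Ramsey/pigeonhole arguments over the $2t - 2$ non-shared vertices of the chain pair (each contributing an additional factor $\sim n$) should let us convert many bad pairs into good ones. The $R_<(K_t, P_n^t)$ factor baked into $N$ supplies precisely the slack needed to absorb these losses; the loss of roughly $n^2$ per non-shared vertex, over $2t-2$ positions, is what produces the denominator $n^{4t-4}$ in the target, in place of the $n^2$ from the initial pigeonhole. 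The principal technical obstacle will be carrying out this extraction cleanly so that the final count matches the claimed bound.
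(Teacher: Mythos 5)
There is a genuine gap: your entire argument rests on resolving the ``bad'' case $c_1 < c_2$, and the extraction idea you sketch for it is speculative and not carried out. It is not clear how reading off an earlier $t$-clique $W$ of the path witnessing $\chi(Y)=c_2$ (which need not end at the shared vertex in the required consecutive way, and whose $\chi$-value you can only lower-bound) would produce good pairs in the required quantity, nor how the bookkeeping over the $2t-2$ non-shared vertices would be organized. Since in an adversarial coloring the dominant class after your pigeonhole could well consist entirely of bad pairs, the proof as proposed does not go through. (A smaller issue: your ``easy'' case bound $N^{2t-1}/(2(4R)^{2t-1}n^2)$ beats the target only for $t\ge 2$ and $n$ large, so even that branch needs care.)

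The idea you are missing is to apply Item 1 of \cref{claim:clique_chain} with $m=n$ rather than $m=2$. Every set of $(R-1)n^2+1$ vertices then contains a monochromatic $t$-clique chain $X_1,\dots,X_n$ with $n$ cliques, and since $\chi$ takes at most $n-1$ values, some consecutive pair must satisfy $\chi(X_i)\ge\chi(X_{i+1})$ --- i.e., it is automatically a good pair, with no case analysis. Hence every set of $Rn^2$ vertices contains at least $n^2$ good pairs (find one, delete a vertex, repeat), and double counting over $(2t-1)$-subsets gives
\[
n^2\cdot\frac{\binom{N}{2t-1}}{\binom{Rn^2}{2t-1}}\ \ge\ \frac{N^{2t-1}}{R^{2t-1}n^{4t-4}}
\]
good pairs. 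Note that the denominator $n^{4t-4}$ comes from double counting against sets of size $Rn^2$, not from per-vertex losses in a repair argument as you conjectured.
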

    \begin{proof}
        First, we observe that every set of $(R-1) n^2+1$ vertices contains a good pair. 
        Indeed, by Item 1 of \cref{claim:clique_chain}, every $(R-1) n^2 +1$ vertices contain a monochromatic $t$-clique chain with $n$ cliques $X_1,\dots,X_{n}$. 
        There must exist an $1 \leq i \leq n-1$ such that $\chi(X_i) \geq \chi(X_{i+1})$, because $\chi(X_i) \in \{1,2,\dots,n-1\}$ for every $i$. 
        Then $(X_i,X_{i+1})$ is a good pair.
        
        Now, every set of $Rn^2$ vertices contains at least $n^2$ good pairs (by repeatedly finding a good pair and deleting one of its vertices). By double counting, there are at least 
        $$
        n^2 \cdot \frac{\binom{N}{Rn^2}}{\binom{N-2t+1}{Rn^2-2t+1}} = n^2 \cdot \frac{\binom{N}{2t-1}}{\binom{Rn^2}{2t-1}} \geq \frac{N^{2t-1}}{R^{2t-1}n^{4t-4}}
        $$
        good pairs.
    \end{proof}
	
    By \cref{claim:good pairs} and averaging, there are vertices $x_1 < \dots < x_{t-1} < z_1 < \dots < z_{t-1}$ and a set $Y'$ of vertices $x_{t-1} < y < z_1$ such that $|Y'| \geq \frac{N}{R^{2t-1}n^{4t-4}} \geq 2n \cdot R_<(K_t,P_n^t)$, and such that for every $y \in Y'$, $\{x_1,\dots,x_{t-1},y\}$ and $\{y,z_1,\dots,z_{t-1}\}$ form a good pair.
    Without loss of generality, for at least half of the vertices $y \in Y'$, the $t$-cliques $\{x_1,\dots,x_{t-1},y\},\{y,z_1,\dots,z_{t-1}\}$ are red. 
    Also, by the pigeonhole principle over the value of $\chi$, there exists a set $Y \subseteq Y'$, $|Y| \geq |Y'|/(2n) = R_<(K_t,P_n^t)$, and there exists a value $t \leq c \leq n-1$, such that $\chi(x_1,\dots,x_{t-1},y) = c$ and $\{x_1,\dots,x_{t-1},y\},\{y,z_1,\dots,z_{t-1}\}$ are red for all $y \in Y$. Note that $\chi(y,z_1,\dots,z_{t-1}) \leq c$ for all $y \in Y$, by the definition of a good pair. 
    As $|Y| = R_<(K_t,P_n^t)$ and $Y$ contains no blue $P_n^t$, it must contain a red clique $y_1<\dots<y_t$. 
    Now take a red $P^t_c$ ending at $x_1,\dots,x_{t-1},y_1$, and extend it by adding the vertices $y_2,\dots,y_t,z_1,\dots,z_{t-1}$, using that $x_1,\dots,x_{t-1},z_1,\dots,z_{t-1}$ are connected to $y_1,\dots,y_t$ in red, and that $y_1,\dots,y_t$ is a red clique. 
    It follows that $\chi(y_t,z_1,\dots,z_{t-1}) > \chi(x_1,\dots,x_{t-1},y_1) = c$, in contradiction to $\chi(y_t,z_1,\dots,z_{t-1}) \leq \nolinebreak c$.
\end{proof}

 
\begin{proof}[Proof of \Cref{thm:Kn Pn^t}]
    The proof is similar to that of \cref{thm:P^t}. 
    Put $M = R(K_t,K_n)$ and $N = (2Mn)^{2t-1}$. 
    As $R(K_t,K_n) \leq \binom{n+t-2}{t-1} \leq n^{t-1}$ (by the Erd\H{o}s-Szekeres bound~\cite{ES}), we have 
    $
    N \leq 2^{2t-1}n^{t(2t-1)}.
    $
    Fix a red/blue coloring of $K_N$,
    and suppose by contradiction that there is no red $P_n^t$ and no blue $K_n$. For each red $t$-clique $x_1,\dots,x_t$, let $\chi(x_1,\dots,x_t)$ be the largest $\ell$ such that there is a red $P_\ell^t$ that ends at $x_1,\dots,x_t$; so $t \leq \chi(x_1,\dots,x_t) \leq n-1$. 
    A {\em good pair} is a pair of red $t$-cliques $X,Y$ such that the last element of $X$ is the first element of $Y$, and $\chi(X) \geq \chi(Y)$. 
    \begin{claim}\label{claim:good pairs 2}
        There are at least $\frac{N^{2t-1}}{2^{2t-1}M^{2t-2}n^{2t-2}}$ good pairs.
    \end{claim}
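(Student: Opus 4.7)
The plan is to mirror the proof of \Cref{claim:good pairs} almost verbatim, the only substantive change being that we apply Item 2 (rather than Item 1) of \Cref{claim:clique_chain}, since the target is now a red $t$-clique rather than a monochromatic one. The argument is the same two-step one: first produce many good pairs in every subset of moderate size, and then double count.

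First I would show that every set of $Mn$ vertices contains a good pair. Since $Mn \geq (M-1)n + 1$, Item 2 of \Cref{claim:clique_chain} applied inside the set with parameter $m = n$ produces either a blue $K_n$ or a red $t$-clique chain $X_1, \dots, X_n$. The first option is ruled out by the contradiction hypothesis, so we are left with a red chain. The values $\chi(X_i)$ lie in $\{t, \dots, n-1\}$, a set of fewer than $n$ elements, so the sequence $\chi(X_1), \dots, \chi(X_n)$ cannot be strictly increasing, and some $i$ must satisfy $\chi(X_i) \geq \chi(X_{i+1})$. Then $(X_i, X_{i+1})$ is a good pair.

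Iterating this observation --- repeatedly locating a good pair and deleting one of its $2t - 1$ vertices --- every set of $2Mn$ vertices is seen to contain at least $Mn$ distinct good pairs: after at most $Mn$ deletions, at least $Mn$ vertices still remain, so the process can continue, and the pairs are pairwise distinct because each contains a vertex removed before the next pair is chosen.

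To conclude, I would double count exactly as in the proof of \Cref{claim:good pairs}. Each good pair spans $2t - 1$ vertices (the two cliques share exactly one vertex, by the definition of a clique chain), so it is contained in $\binom{N - 2t + 1}{2Mn - 2t + 1}$ subsets of size $2Mn$. Letting $G$ denote the total number of good pairs, summing over the $\binom{N}{2Mn}$ such subsets yields
\[
G \;\geq\; Mn \cdot \frac{\binom{N}{2Mn}}{\binom{N - 2t + 1}{2Mn - 2t + 1}} \;=\; Mn \cdot \frac{\binom{N}{2t-1}}{\binom{2Mn}{2t-1}} \;\geq\; \frac{Mn \cdot N^{2t-1}}{(2Mn)^{2t-1}} \;=\; \frac{N^{2t-1}}{2^{2t-1} M^{2t-2} n^{2t-2}},
\]
which is the claimed bound. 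I do not foresee any real obstacle; the entire argument is a direct translation of \Cref{claim:good pairs}, and the only point that requires any care is remembering that each good pair is a $(2t-1)$-vertex object, which is what makes the exponents in the final bound match up.
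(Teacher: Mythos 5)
Your proposal is correct and follows essentially the same route as the paper: apply Item 2 of \Cref{claim:clique_chain} to get a red $t$-clique chain of $n$ cliques in any moderately sized set (the blue $K_n$ being excluded by hypothesis), extract a good pair by pigeonhole on the $\chi$-values, iterate with deletions to get $Mn$ good pairs in every $2Mn$-set, and double count. All the arithmetic, including the identity $\binom{N}{2Mn}/\binom{N-2t+1}{2Mn-2t+1} = \binom{N}{2t-1}/\binom{2Mn}{2t-1}$ and the final bound, checks out.
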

    \begin{proof}
        First, we observe that every set of $(M-1) n+1$ vertices contains a good pair. Indeed, by Item 2 of \cref{claim:clique_chain}, every set of $(M-1)n+1$ vertices contains a blue $K_n$ or a red $t$-clique chain with $n$ cliques $X_1,\dots,X_{n}$. In the latter case, there must be $1 \leq i \leq n-1$ such that $\chi(X_i) \geq \chi(X_{i+1})$, which gives a good pair.
        
        Now, every set of $2 M n$ vertices contains at least $Mn$ good pairs (by repeatedly finding a good pair and deleting one of its vertices). By double counting, there are at least 
        $$
        Mn \cdot \frac{\binom{N}{2Mn}}{\binom{N-2t+1}{2Mn-2t+1}} = Mn \cdot \frac{\binom{N}{2t-1}}{\binom{2Mn}{2t-1}} \geq \frac{N^{2t-1}}{2^{2t-1}M^{2t-2}n^{2t-2}}
        $$
        good pairs.
    \end{proof}
    By \cref{claim:good pairs 2} and averaging, there are vertices $x_1 < \dots < x_{t-1} < z_1 < \dots < z_{t-1}$ and a set $Y'$ of vertices $x_{t-1} < y < z_1$ such that $|Y'| \geq \frac{N}{2^{2t-1}M^{2t-2}n^{2t-2}} \geq Mn$, and such that for all $y \in Y'$, $\{x_1,\dots,x_{t-1},y\}$ and $\{y,z_1,\dots,z_{t-1}\}$ form a good pair.
	By the pigeonhole principle over the value of $\chi$, there exists a set $Y \subseteq Y'$, $|Y| \geq |Y'|/n = M$, and a value $t \leq c \leq n-1$, such that $\chi(x_1,\dots,x_{t-1},y) = c$ for all $y \in Y$. 
    Then $\chi(y,z_1,\dots,z_{t-1}) \leq c$ for all $y \in Y$, by the definition of a good pair. 
	As $|Y| = M = R(K_t,K_n)$ and $Y$ contains no blue $K_n$, it must contain a red clique $y_1<\dots<y_t$. As in the proof of \cref{thm:P^t}, we get
	$c \geq \chi(y_t,z_1,\dots,z_{t-1}) > \chi(x_1,\dots,x_{t-1},y_1) = c$, a contradiction.
\end{proof}

\section{Proof of \Cref{thm:clique vs power-path}}\label{sec:K_{s+1}}
We begin with a brief sketch.
Our strategy for upper bounding $R(K_{s+1},P_n^t)$ is to find a certain structure which we call an {\em $s$-red-net} (see \cref{def:s-net}). We will show (see \cref{cor:s-net is enough}) that this structure implies the existence of a red $K_{s+1}$ or a blue $P_n^t$.
To find an $s$-red-net, we will first find blue cliques $V_1 < \dots < V_M$, each of large constant size, and partition each $V_i$ into $s$ consecutive equal-sized parts. 
For each $i$ and $0 \leq j \leq s-2$, we will define $\chi_j(i)$ as the largest $\ell$ such that there exists a blue $P_\ell^t$ whose last $t$ vertices belong to the first $(s-1-j)$ parts in the partition of $V_i$.
The key point is that if $i_1 < i_2$ and $\chi_j(i_1) \geq \chi_j(i_2)$, then the bipartite graph between certain parts of $V_{i_1}$ and certain parts of $V_{i_2}$ must be almost red. 
Using this, we can reduce the task of finding an $s$-red-net to the task of finding a certain structure in the functions $\chi_0,\dots,\chi_{s-2}$. We call this structure a {\em $(\chi_0,\dots,\chi_{s-2})$-forest}, see \cref{def:chi values} and \cref{lem:chi-functions}. 

We now introduce some definitions.
A {\em rooted forest} is a collection of rooted trees.
The {\em depth} of a vertex is the distance from the root (of the corresponding tree).  
A rooted forest is {\em balanced} if all leaves have the same depth. We always consider forests $F$ with $V(F) \subseteq [M]$ for some integer $M$, so that there is a natural ordering on $V(F)$. 
We say that a rooted forest is {\em well-ordered} if each vertex comes before all of its descendants, and for every two vertices $y<y'$ that are the same depth, the descendants of $y$ come before $y'$, and thus before all the descendants of $y'$. 
In particular, if $T_1,T_2$ are two components (trees) in the forest, then $T_1 < T_2$ or $T_2 < T_1$. 
Also, if $x$ is the root of a component $T$, $y_1 < \dots < y_k$ are the children of $x$, and $S_i$ is the subtree rooted at $y_i$ ($1 \leq i \leq k$), then $x < S_1 < \dots < S_k$, and each $S_i$ is well-ordered. 
Observe that if $F$ is balanced well-ordered forest of depth $d$, and $x_1,\dots,x_m$ are the roots of the components (trees) of $F$, then after deleting $x_1,\dots,x_m$ we get a balanced well-ordered forest of depth $d-1$ (the trees in this forest are the subtrees rooted at the children of $x_1,\dots,x_m$). 
We write $|F|$ for the number of vertices in a forest $F$.

We are now ready to define the notion of an $s$-red-net, which will play a key role in the proof. 

\begin{definition}\label{def:s-net}
    Consider a red/blue edge-coloring of $K_N$.
    Let $s \geq 1$. 
    An {\em $s$-red-net of order $r$} is a pair $(F,(X_v)_{v \in V(F)})$, where $F$ is a balanced well-ordered forest of depth $s-1$, and for each $v \in V(F)$, $X_v\subseteq [N]$ is a blue clique (in $K_N$) of size $r$, such that the following holds:
    \begin{enumerate}
        \item For $v,u \in V(F)$, if $v < u$ then $X_v < X_u$. 
        \item For every $v \in V(F)$ and every descendant $u$ of $v$, there is no blue $K_{t,t}$ with one part in $X_v$ and the other in $X_u$. 
    \end{enumerate}
\end{definition}

See \cref{fig:red-net-def} for an example of an $s$-red-net for $s=2,3$.
We note that since there is a total order of the vertices of $F$, there is also a corresponding total order of $(X_v)_{v\in V(F)}$.

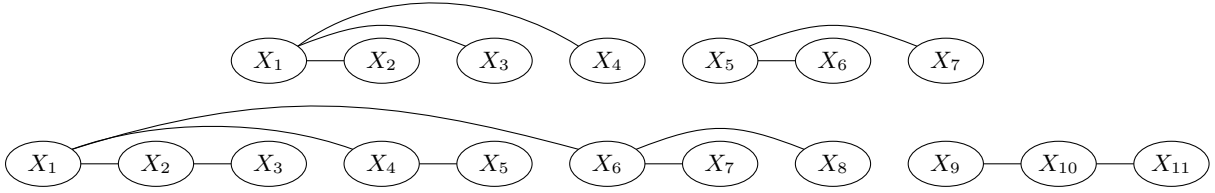
\begin{figure}
    \centering
    \begin{tikzpicture}[scale = 1]
        \coordinate (v1) at (0,0);
        \coordinate (v11) at (1.5,0);
        \coordinate (v12) at (3,0);
        \coordinate (v13) at (4.5,0);

        \draw (v1) ellipse(0.5 and 0.3);
        \draw (v11) ellipse(0.5 and 0.3);
        \draw (v12) ellipse(0.5 and 0.3);
        \draw (v13) ellipse(0.5 and 0.3);
      
        \draw (v1) node {\footnotesize  $X_1$};
        \draw (v11) node {\footnotesize $X_2$};
        \draw (v12) node {\footnotesize $X_3$};
        \draw (v13) node {\footnotesize $X_4$};

        \coordinate (v1R) at ($(v1) + ({0.5}, {0})$);
        \coordinate (v1D) at ($(v1) + ({0.5*cos(40)}, {0.3*sin(40)})$);
        \coordinate (v11L) at ($(v11) + ({-0.5}, {0})$);
        \coordinate (v11R) at ($(v11) + ({0.5}, {0})$);
        \coordinate (v12R) at ($(v12) + ({0.5}, {0})$);
        \coordinate (v13R) at ($(v13) + ({0.5}, {0})$);
        \coordinate (v12U) at ($(v12) + ({0.5*cos(140)}, {0.3*sin(140)})$);
        \coordinate (v13U) at ($(v13) + ({0.5*cos(140)}, {0.3*sin(140)})$);
        \coordinate (v13D) at ($(v13) + ({0.5*cos(40)}, {0.3*sin(40)})$);

        \draw (v1R) -- (v11L);
        \draw (v1D)  .. controls ($(v11) + ({0.5*cos(110)}, {0.3*2*sin(110)})$) and ($(v11) + ({0.5*cos(70)}, {0.3*2*sin(70)})$) .. (v12U);
        \draw (v1D)  .. controls ($(v11) + ({0.5*cos(90)}, {0.3*3.4*sin(90)})$) and ($(v12) + ({0.5*cos(90)}, {0.3*3*sin(90)})$) .. (v13U);

        \coordinate (v2) at (6,0);
        \coordinate (v21) at (7.5,0);
        \coordinate (v22) at (9,0);

        \draw (v2) ellipse(0.5 and 0.3);
        \draw (v21) ellipse(0.5 and 0.3);
        \draw (v22) ellipse(0.5 and 0.3);
      
        \draw (v2) node {\footnotesize  $X_5$};
        \draw (v21) node {\footnotesize $X_6$};
        \draw (v22) node {\footnotesize $X_7$};

        \coordinate (v2R) at ($(v2) + ({0.5}, {0})$);
        \coordinate (v2D) at ($(v2) + ({0.5*cos(40)}, {0.3*sin(40)})$);
        \coordinate (v21L) at ($(v21) + ({-0.5}, {0})$);
        \coordinate (v21R) at ($(v21) + ({0.5}, {0})$);
        \coordinate (v22R) at ($(v22) + ({0.5}, {0})$);
        \coordinate (v22U) at ($(v22) + ({0.5*cos(140)}, {0.3*sin(140)})$);

        \draw (v2R) -- (v21L);
        \draw (v2D)  .. controls ($(v21) + ({0.5*cos(110)}, {0.3*2*sin(110)})$) and ($(v21) + ({0.5*cos(70)}, {0.3*2*sin(70)})$) .. (v22U);
    \end{tikzpicture}
    \begin{tikzpicture}[scale = 1]
        \coordinate (v1) at (0,0);
        \coordinate (v11) at (1.5,0);
        \coordinate (v12) at (4.5,0);
        \coordinate (v13) at (7.5,0);
        \coordinate (v111) at (3,0);
        \coordinate (v121) at (6,0);
        \coordinate (v131) at (9,0);
        \coordinate (v132) at (10.5,0);

        \draw (v1) ellipse(0.5 and 0.3);
        \draw (v11) ellipse(0.5 and 0.3);
        \draw (v12) ellipse(0.5 and 0.3);
        \draw (v13) ellipse(0.5 and 0.3);
        \draw (v111) ellipse(0.5 and 0.3);
        \draw (v121) ellipse(0.5 and 0.3);
        \draw (v131) ellipse(0.5 and 0.3);
        \draw (v132) ellipse(0.5 and 0.3);
      
        \draw (v1) node {\footnotesize  $X_1$};
        \draw (v11) node {\footnotesize $X_{2}$};
        \draw (v12) node {\footnotesize $X_4$};
        \draw (v13) node {\footnotesize $X_6$};
        \draw (v111) node {\footnotesize $X_3$};
        \draw (v121) node {\footnotesize $X_5$};
        \draw (v131) node {\footnotesize $X_7$};
        \draw (v132) node {\footnotesize $X_8$};

        \coordinate (v1R) at ($(v1) + ({0.5}, {0})$);
        \coordinate (v1D) at ($(v1) + ({0.5*cos(40)}, {0.3*sin(40)})$);
        \coordinate (v11L) at ($(v11) + ({-0.5}, {0})$);
        \coordinate (v11R) at ($(v11) + ({0.5}, {0})$);
        \coordinate (v111L) at ($(v111) + ({-0.5}, {0})$);
        \coordinate (v12R) at ($(v12) + ({0.5}, {0})$);
        \coordinate (v121L) at ($(v121) + ({-0.5}, {0})$);
        \coordinate (v13R) at ($(v13) + ({0.5}, {0})$);
        \coordinate (v131L) at ($(v131) + ({-0.5}, {0})$);
        \coordinate (v12U) at ($(v12) + ({0.5*cos(140)}, {0.3*sin(140)})$);
        \coordinate (v13U) at ($(v13) + ({0.5*cos(140)}, {0.3*sin(140)})$);
        \coordinate (v13D) at ($(v13) + ({0.5*cos(40)}, {0.3*sin(40)})$);
        \coordinate (v132U) at ($(v132) + ({0.5*cos(140)}, {0.3*sin(140)})$);

        \draw (v1R) -- (v11L);
        \draw (v11R) -- (v111L);
        \draw (v12R) -- (v121L);
        \draw (v13R) -- (v131L);
        \draw (v1D)  .. controls ($(v11) + ({0.5*cos(90)}, {0.3*2*sin(90)})$) and ($(v111) + ({0.5*cos(90)}, {0.3*2*sin(90)})$) .. (v12U);
        \draw (v1D)  .. controls ($(v111) + ({0.5*cos(90)}, {0.3*3.4*sin(90)})$) and ($(v12) + ({0.5*cos(90)}, {0.3*3*sin(90)})$) .. (v13U);
        \draw (v13D)  .. controls ($(v131) + ({0.5*cos(110)}, {0.3*2*sin(110)})$) and ($(v131) + ({0.5*cos(70)}, {0.3*2*sin(70)})$) .. (v132U);

        \coordinate (v2) at (12,0);
        \coordinate (v21) at (13.5,0);
        \coordinate (v211) at (15,0);
        \draw (v2) node {\footnotesize  $X_9$};
        \draw (v21) node {\footnotesize $X_{10}$};
        \draw (v211) node {\footnotesize $X_{11}$};
        \draw (v2) ellipse(0.5 and 0.3);
        \draw (v21) ellipse(0.5 and 0.3);
        \draw (v211) ellipse(0.5 and 0.3);
        \coordinate (v2R) at ($(v2) + ({0.5}, {0})$);
        \coordinate (v21L) at ($(v21) + ({-0.5}, {0})$);
        \coordinate (v21R) at ($(v21) + ({0.5}, {0})$);
        \coordinate (v211L) at ($(v211) + ({-0.5}, {0})$);
        \draw (v2R) -- (v21L);
        \draw (v21R) -- (v211L);
    \end{tikzpicture}
    \caption{An example of a 2-red-net (top) and a 3-red-net (bottom). In the top example $V(F)=\{1,\dots,7\}$ and in the bottom example $V(F) = \{1,\dots,11\}$. An edge between $X_i$ and $X_j$ indicates that there is an edge between $i$ and $j$ in $F$.}
    \label{fig:red-net-def} 
\end{figure}

\subsection{Finding $s$-red-nets}
As mentioned above, we will find an $s$-red-net by finding a certain structure in a family of functions $\chi_0,\dots,\chi_{q-1} : [M] \rightarrow [n]$. We now define this structure. 
\begin{definition}\label{def:chi values}
    Let $\chi_0,\dots,\chi_{q-1} : [M] \rightarrow [n]$ be $q$ functions. A {\em $(\chi_0,\dots,\chi_{q-1})$-forest} is a well-ordered balanced forest $F$ of depth $q$ with $V(F) \subseteq [M]$, such that the following holds: For every $0 \leq d \leq q-1$ and $a \in V(F)$ at depth $d$, it holds that $\chi_{d}(a) \geq \chi_{d}(a')$ for every child $a'$ of $a$. We denote by $L(F)$ the set of leaves of $F$. 
\end{definition}
	
Note that in the case $q=1$, a $(\chi_0)$-forest simply consists of elements $x_1,\dots,x_m$ (the roots of the trees in the forest) and sets $Y_1,\dots,Y_m$ (the sets of leaves of the trees) such that $x_1 < Y_1 < \dots < x_m < Y_m$ and $\chi_0(x_i) \geq \chi_0(y)$ for all $y \in Y_i$ and $1 \leq i \leq m$. 
See \cref{fig:chi-1-2-forest} for an example when $q=2$.
\begin{figure}
    \centering
    \begin{tikzpicture}[scale = 1]
        \coordinate (v1) at (0,0);
        \coordinate (v11) at (1.5,0);
        \coordinate (v12) at (4.5,0);
        \coordinate (v13) at (7.5,0);
        \coordinate (v111) at (3,0);
        \coordinate (v121) at (6,0);
        \coordinate (v131) at (9,0);
        \coordinate (v132) at (10.5,0);
        
        \draw (v1) node[fill=black,circle,minimum size=2pt,inner sep=0pt] {};
        \draw (v11) node[fill=black,circle,minimum size=2pt,inner sep=0pt] {};
        \draw (v12) node[fill=black,circle,minimum size=2pt,inner sep=0pt] {};
        \draw (v13) node[fill=black,circle,minimum size=2pt,inner sep=0pt] {};
        \draw (v111) node[fill=black,circle,minimum size=2pt,inner sep=0pt] {};
        \draw (v121) node[fill=black,circle,minimum size=2pt,inner sep=0pt] {};
        \draw (v131) node[fill=black,circle,minimum size=2pt,inner sep=0pt] {};
        \draw (v132) node[fill=black,circle,minimum size=2pt,inner sep=0pt] {};
        
        \draw (v1) node[above] {\footnotesize $1$};
        \draw (v11) node[above] {\footnotesize $2$};
        \draw (v12) node[above] {\footnotesize $3$};
        \draw (v13) node[above] {\footnotesize $4$};
        \draw (v111) node[above] {\footnotesize $5$};
        \draw (v121) node[above] {\footnotesize $6$};
        \draw (v131) node[above] {\footnotesize $7$};
        \draw (v132) node[above] {\footnotesize $8$};
        
        \draw (v1) node[below] {\footnotesize $(7,2)$};
        \draw (v11) node[below] {\footnotesize $(2,2)$};
        \draw (v12) node[below] {\footnotesize $(4,3)$};
        \draw (v13) node[below] {\footnotesize $(5,5)$};
        \draw (v111) node[below] {\footnotesize $(3,1)$};
        \draw (v121) node[below] {\footnotesize $(2,1)$};
        \draw (v131) node[below] {\footnotesize $(6,4)$};
        \draw (v132) node[below] {\footnotesize $(5,3)$};
        
        \draw (v1) -- (v11);
        \draw (v11) -- (v111);
        \draw (v12) -- (v121);
        \draw (v13) -- (v131);
        \draw (v1)  .. controls ($(v11) + ({0.5*cos(90)}, {0.3*2.3*sin(90)})$) and ($(v111) + ({0.5*cos(90)}, {0.3*2.3*sin(90)})$) .. (v12);
        \draw (v1)  .. controls ($(v111) + ({0.5*cos(90)}, {0.3*4*sin(90)})$) and ($(v12) + ({0.5*cos(90)}, {0.3*4*sin(90)})$) .. (v13);
        \draw (v13)  .. controls ($(v131) + ({0.5*cos(120)}, {0.3*2.5*sin(120)})$) and ($(v131) + ({0.5*cos(60)}, {0.3*2.5*sin(60)})$) .. (v132);
        
        \coordinate (v2) at (12,0);
        \coordinate (v21) at (13.5,0);
        \coordinate (v211) at (15,0);
        \draw (v2) node[fill=black,circle,minimum size=2pt,inner sep=0pt] {};
        \draw (v21) node[fill=black,circle,minimum size=2pt,inner sep=0pt] {};
        \draw (v211) node[fill=black,circle,minimum size=2pt,inner sep=0pt] {};
        \draw (v2) node[above] {\footnotesize  $9$};
        \draw (v21) node[above] {\footnotesize $10$};
        \draw (v211) node[above] {\footnotesize $11$};
        \draw (v2) node[below] {\footnotesize  $(9,7)$};
        \draw (v21) node[below] {\footnotesize $(7,6)$};
        \draw (v211) node[below] {\footnotesize $(10,4)$};
        \draw (v2) -- (v21);
        \draw (v21) -- (v211);
    \end{tikzpicture}
    \caption{An example of a $(\chi_0,\chi_1)$-forest on vertex set $\{1,2,\dots,11\}$: the number above each node is its index, and the pair below node $i$ is $(\chi_0(i),\chi_1(i))$.}
    \label{fig:chi-1-2-forest} 
\end{figure}
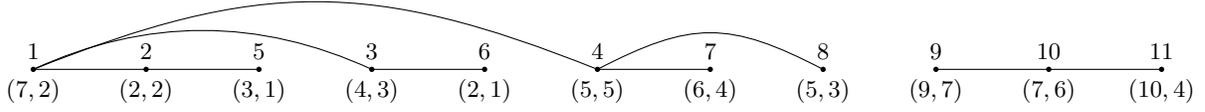
\begin{lemma}\label{lem:chi-functions}
    For every $q \geq 1$ and functions $\chi_0,\dots,\chi_{q-1} : [M] \rightarrow [n]$, 
    there is a $(\chi_0,\dots,\chi_{q-1})$-forest $F$ with 
    $|L(F)| \geq M/2^{q-1} - n$. 
\end{lemma}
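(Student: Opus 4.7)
My plan is to prove the lemma by induction on $q$, with the base case handled by a greedy-sweep argument and the inductive step by a partition-and-recurse strategy.

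For the base case $q = 1$, I would build a $(\chi_0)$-forest of depth $1$ by a left-to-right greedy sweep on $[M]$: maintain a current leader, and whenever a new element $x$ has $\chi_0(x)$ strictly greater than the current leader's $\chi_0$-value, finalize the current interval (with the leader as its root and the non-leaders as its leaves) and start a new interval with $x$ as the new leader. Since the leaders' $\chi_0$-values form a strictly increasing sequence in $[n]$, at most $n$ leaders arise, giving a $(\chi_0)$-forest $F$ on all of $[M]$ with $|L(F)| \geq M - n = M/2^{0} - n$.

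For the inductive step $q \geq 2$, I would first apply the base-case greedy with $\chi_0$, partitioning $[M]$ into intervals $I_1 < \dots < I_m$ with $m \leq n$ and respective leaders $r_1, \dots, r_m$, where each $r_j$ is the first element of $I_j$ and maximizes $\chi_0$ on $I_j$. Then, for each $j$, I would invoke the inductive hypothesis on $I_j \setminus \{r_j\}$ with the shifted function list $\chi_1, \dots, \chi_{q-1}$, obtaining a $(\chi_1, \dots, \chi_{q-1})$-forest $F_j$ of depth $q-1$ with $|L(F_j)| \geq (|I_j|-1)/2^{q-2} - n$ (or $0$ if this is negative). I would combine these sub-forests by attaching each $F_j$ beneath its leader $r_j$, so that the roots of $F_j$ become the depth-$1$ children of $r_j$. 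Well-ordering follows since the $I_j$'s are ordered and each $r_j$ is the first element of its interval; the $\chi_0$-condition at depth $0$ holds since $r_j$ maximizes $\chi_0$ on $I_j$; and the $\chi_d$-conditions for $d \geq 1$ are inherited from the $F_j$'s.

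The main obstacle I expect is extracting the clean bound $|L(F)| \geq M/2^{q-1} - n$ from the summation $|L(F)| = \sum_j |L(F_j)|$. Using $\sum_j(|I_j|-1) = M - m \geq M - n$ together with $m \leq n$, one can first dispose of the trivial regime $M \leq n \cdot 2^{q-1}$ (in which the right-hand side is non-positive) and then bound the sum in the non-trivial regime by exploiting that the bulk of $M$ must lie in a few ``large'' intervals. The key book-keeping step is verifying that the factor-of-$2$ slack in the denominator $2^{q-1}$ (compared to $2^{q-2}$ appearing in the inductive hypothesis) absorbs the per-level greedy loss of $n$, so that the net additive error remains $-n$ rather than accumulating across levels.
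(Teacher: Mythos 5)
Your base case matches the paper's. Your inductive step, however, has a genuine quantitative gap that is not just ``book-keeping'': it cannot be repaired within the per-interval framework you propose. Applying the induction hypothesis separately inside each interval $I_j\setminus\{r_j\}$ costs an additive $n$ \emph{per interval} (the at most $n$ leaders produced at the next level inside that interval), and there can be up to $n$ intervals. So the best you can extract from $|L(F)|=\sum_j|L(F_j)|\ge \sum_j\bigl((|I_j|-1)/2^{q-2}-n\bigr)$ is roughly $M/2^{q-2}-n^2$ at one level, compounding to an additive error of order $n^q$ over $q$ levels rather than $-n$. This loss is real, not slack: an adversary can split $[M]$ into $n$ intervals via $\chi_0$ and then restart $\chi_1$ at value $1$ inside each interval, forcing $n$ fresh depth-one leaders per interval ($n^2$ in total), whereas a single global sweep of $\chi_1$ would produce only $n$ leaders overall. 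The ``bulk lies in a few large intervals'' idea does not help, since each large interval still costs $n$. Note also that the weaker bound $M-O(qn^q)$ would force $M=\Omega_s(n^{s-1})$ in the application to \Cref{thm:clique vs power-path}, destroying the linearity in $n$ that is the whole point of that theorem. (A minor, fixable point: if some $F_j$ is empty you must discard $r_j$, or the forest is not balanced.)

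The paper's proof avoids exactly this trap by recursing \emph{globally}: it removes only the $k\le n$ level-zero leaders $x_1,\dots,x_k$ and applies the induction hypothesis once to all of $[M]\setminus\{x_1,\dots,x_k\}$, so each level loses only $n$ elements in total. The price is that the components of the resulting $(\chi_1,\dots,\chi_{q-1})$-forest may straddle several level-zero intervals, so one cannot simply hang each component under ``its'' leader and keep well-ordering. The paper resolves this by forming, for each leader $x_j$, the minimal interval $Z_j$ spanning $x_j$ and its assigned components, showing the interval graph on the $Z_j$'s is triangle-free (hence $2$-colorable, as interval graphs are perfect), and keeping the color class retaining at least half the leaves. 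This halving at each level is precisely the source of the $2^{q-1}$ in the denominator, and it converts the per-level loss $\tfrac12(X-2n)=X/2-n$ into the clean $-n$ of the statement. To fix your proof you would need to import some version of this global-recursion-plus-selection step; the per-interval recursion alone cannot reach the stated bound.
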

\begin{proof}
    The proof is by induction on $q$. 
    First, define a sequence $x_1,x_2,\dots$ as follows. Set $x_1 = 1$, and for each $i \geq 2$, let $x_i$ be the smallest $x > x_{i-1}$ with $\chi_0(x) > \chi_0(x_{i-1})$. Let $x_1 < \dots < x_k$ be the resulting sequence. As $\chi_0(x_1) < \dots < \chi_0(x_k)$, we have $k \leq n$.
	
    First, we handle the base case $q=1$. In this case, put $Y_i=\{x_i+1,\dots,x_{i+1}-1\}$ for $1 \leq i \leq k-1$, and $Y_k=\{x_k+1,\dots,M\}$.
    By the definition of the sequence $(x_i)_i$, we have that $\chi_0(x_i) \geq \chi_0(y)$ for every $y \in Y_i$ and $1 \leq i \leq m$. 
    Now, write $I$ for the set of $i\in[k]$ with $Y_i\neq \emptyset$.
    For $i \in I$, let $T_i$ be the tree of depth $1$ with root $x_i$ and leaf-set $Y_i$. 
    Let $F$ be the forest with components $T_i$, $i \in I$. Then $F$ is well-ordered and balanced of depth $1$. 
    Also, $L(F) = |Y_1| + \dots + |Y_k| = M-k \geq M-n$. 
    So $F$ is the required $(\chi_0)$-forest.
	
	Suppose now that $q \geq 2$. 		
	Apply the induction hypothesis for $q-1$ to the functions $\chi_1,\dots,\chi_{q-1}$ and to $[M] \setminus \{x_1,\dots,x_k\}$ in place of $[M]$. This gives a $(\chi_1,\dots,\chi_{q-1})$-forest $F'$ with $V(F') \subseteq [M] \setminus \nolinebreak \{x_1,\dots,x_k\}$ and 
	$$
	|L(F')| \geq \frac{M-n}{2^{q-2}} - n \geq \frac{M}{2^{q-2}} - 2n.
	$$ 
	
	Let $S_1 < \dots < S_{\ell}$ be the components of $F'$. 
	For each $1 \leq i \leq \ell$, let $y_i$ be the root of $S_i$, and let $j_i$ be the maximum $1 \leq j \leq k$ with $x_j < y_i$ (this is well-defined because $x_1 = 1$). 
    Then $x_{j_i} < y_i < x_{j_i+1}$ if $j_i < k$, and $y_i \geq x_k$ if $j_i = k$.
    It follows that $\chi_0(x_{j_i}) \geq \chi_0(y_i)$ by the definition of the sequence $(x_j)_j$. (This means that $y_i$ is a potential child of $x_{j_i}$ in the $(\chi_0,\dots,\chi_{q-1})$-forest we are going to construct.)
    For $1 \leq j \leq k$, let $I_j = \{1 \leq i \leq \ell : j_i = j\}$, the set of potential children of $x_j$. 
    Let $J_0$ be the set of all $1 \leq j \leq k$ such that $I_j \neq \emptyset$. For each $j \in J_0$, let $Z_j$ be the minimum interval (in $[M]$) which contains the set $\{x_j\} \cup \bigcup_{i \in I_j}V(S_i)$; namely, the left endpoint of $Z_j$ is $x_j$, and the right endpoint of $Z_j$ is the rightmost element of $\bigcup_{i \in I_j}V(S_i)$. 
    See \cref{fig:proof-chi-functions} for an example.
    Recall that $L(S_i)$ denotes the set of leaves of $S_i$. 
	
    \begin{claim}
        There is $J \subseteq J_0$ such that $Z_j < Z_{j'}$ for every pair $j,j' \in J$ with $j < j'$, and 
        \begin{equation}\label{eq:bigger half}
            \sum_{j \in J}\sum_{i \in I_j}|L(S_i)| \geq \frac{1}{2}\left(
            |L(S_1)| + \dots + |L(S_{\ell})|\right) \geq 
            \frac{M}{2^{q-1}} - n. 
        \end{equation}
    \end{claim} 
    \begin{proof}
        Let $H$ be the interval graph of the intervals $Z_j$, $j \in J_0$; namely, $V(H) = J_0$ and $j,j'$ are  adjacent if $Z_j,Z_{j'}$ intersect. 
        We claim that $H$ is triangle-free. 
        Indeed, suppose that $j < j' < j''$ make a triangle in $H$. 
        Since $Z_j,Z_{j''}$ intersect, there is $v \in \bigcup_{i \in I_j}V(S_i)$ with $v > x_{j''}$. 
        Let $i \in I_j$ such that $v \in V(S_i)$. 
        We know $y_i < x_{j'}$ because $j_i = j < j'$. 
        Now, take an arbitrary $i' \in I_{j'}$ (the set $I_{j'}$ is non-empty because $j' \in J_0$).
        If $i' \le i$, then $y_{i'} \le y_i$, which means $j_{i'} \le j_i=j<j'$.
        And if $i' > i$, then $S_i < S_{i'}$. As $v \in V(S_i)$, this implies that $y_{i'} > v > x_{j''}$, so $j_{i'} \ge j'' > j'$.
        In either case, $j_{i'} \neq j'$, contradicting the fact that $i' \in I_{j'}$.
    This proves our claim that $H$ is triangle-free.
        
        Interval graphs are perfect (see~\cite[Chapter 8]{Golumbic}), so $H$ is $2$-colorable. Take $J \subseteq J_0$ to be the color class that maximizes $\sum_{j \in J}\sum_{i \in I_j}|L(S_i)|$. Then \eqref{eq:bigger half} holds. Also, for every pair $j,j' \in J$ with $j < j'$, we have $Z_{j} \cap Z_{j'} = \emptyset$ (because $J$ is independent in $H$) and hence $Z_j < Z_{j'}$ (because $Z_j,Z_{j'}$ are intervals and $x_j \in Z_j$ is to the left of $x_{j'} \in Z_{j'}$).
    \end{proof}
	
    We now complete the proof of the lemma. For each $j \in J$, form a balanced tree $T_j$ by taking~$x_j$ as the root and attaching $S_i$ as a subtree of the root for every $i \in I_j$. 
    Then $T_j$ is well-ordered and balanced of depth $q$, because the $S_i$'s are well-ordered and have depth $q-1$. 
    The children of $x_j$ are all the $y_i$ with $i \in I_j$, and we already saw that $\chi_0(x_j) \geq \chi_0(y_i)$ for each such $y_i$. 
    Also, by the claim, we have $Z_j < Z_{j'}$ for all $j,j' \in J$ with $j < j'$. 
    Hence, writing $J = \{j_1,\dots,j_{m}\}$, we have $V(T_{j_1}) < \dots < V(T_{j_m})$. 
    Let $F$ be the forest with components $T_{j_1},\dots,T_{j_m}$. 
    Then $F$ is a $(\chi_0,\dots,\chi_{q-1})$-forest; indeed, the requirement in \cref{def:chi values} holds for $d=0$ (as we just saw), and also holds for $1 \leq d \leq q-1$ because $(S_1,\dots,S_{\ell})$ is a $(\chi_1,\dots,\chi_{q-1})$-forest. 
    Also, $|L(F)| \geq M/2^{q-1} - n$ by \eqref{eq:bigger half}.
    This completes the proof.  
\end{proof}
\begin{figure}
    \centering
    \begin{tikzpicture}[scale = 1]
        \foreach \i in {1,...,15} {
            \coordinate (\i) at (\i, 0);
        }
        \draw (1) -- (15);
        
        \draw (1) node[fill=black,circle,minimum size=2pt,inner sep=0pt] {};
        \draw (5) node[fill=black,circle,minimum size=2pt,inner sep=0pt] {};
        \draw (10) node[fill=black,circle,minimum size=2pt,inner sep=0pt] {};
        \draw (13) node[fill=black,circle,minimum size=2pt,inner sep=0pt] {};
        \draw (1) node[above] {\footnotesize $x_1$};
        \draw (5) node[above] {\footnotesize $x_2$};
        \draw (10) node[above] {\footnotesize $x_3$};
        \draw (13) node[above] {\footnotesize $x_4$};
        \draw (1) node[below] {\footnotesize $1$};
        \draw (15) node[below] {\footnotesize $N$};
        
        \draw (2,0) node[fill=red,circle,minimum size=2pt,inner sep=0pt] {};
        \draw (1.9,0) node[above,red] {\footnotesize $y_1$};
        \draw[dashed,red] (2.6,0) ellipse (0.6 and 0.3);
        
        \draw (3.6,0) node[fill=red,circle,minimum size=2pt,inner sep=0pt] {};
        \draw (3.5,0) node[above,red] {\footnotesize $y_2$};
        \draw[dashed,red] (4.8,0) ellipse (1.2 and 0.5);
        
        \draw (6.4,0) node[fill=red,circle,minimum size=2pt,inner sep=0pt] {};
        \draw (6.3,0) node[above,red] {\footnotesize $y_3$};
        \draw[dashed,red] (7.2,0) ellipse (0.8 and 0.3);
        
        \draw (8.6,0) node[fill=red,circle,minimum size=2pt,inner sep=0pt] {};
        \draw (8.5,0) node[above,red] {\footnotesize $y_4$};
        \draw[dashed,red] (9.6,0) ellipse (1 and 0.4);
        
        \draw (11.6,0) node[fill=brown,circle,minimum size=2pt,inner sep=0pt] {};
        \draw (11.5,0) node[above,red] {\footnotesize $y_5$};
        \draw[dashed,red] (13.1,0) ellipse (1.5 and 0.5);

        \draw[thick,blue,<->] (1,-0.8) -- (6,-0.8);
        \draw (3.5,-0.8) node[below,blue] {\footnotesize $Z_1$};
        
        \draw[thick,blue,<->] (5,-0.6) -- (10.6,-0.6);
        \draw (7.8,-0.6) node[below,blue] {\footnotesize $Z_2$};
        
        \draw[thick,blue,<->] (10,-0.8) -- (14.6,-0.8);
        \draw (12.3,-0.8) node[below,blue] {\footnotesize $Z_3$};
    \end{tikzpicture}
    \caption{An example for the proof of \cref{lem:chi-functions} where $q=2$, $k=4$ and $\ell=5$. Every red circle stands for a component of $F'$ rooted at some $y_i$. Also, $J_0=\{1,2,3\}$, so $Z_j$ is defined for $j=1,2,3$.}
    \label{fig:proof-chi-functions} 
\end{figure}
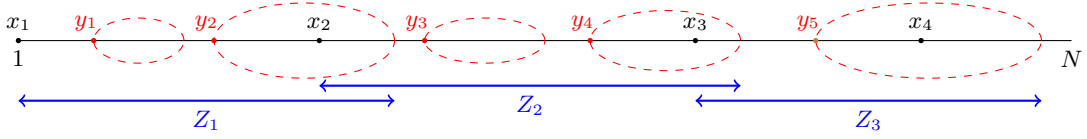

\subsection{Using $s$-red-nets}
In this section, we show in \cref{cor:s-net is enough} that a large enough $s$-red-net implies the existence of a red~$K_{s+1}$ or a blue $P_n^t$. 
This is done via a stronger statement (see \cref{lem:s-net is enough}), which has the advantage of allowing an inductive proof. 
The following definition will play an important role. 

\begin{definition}\label{def:head-tail}
    Let $\net = (F, (X_v)_v)$ be an $s$-red-net.
    Let $x$ be the root of the leftmost component (tree) of $F$ and $x'$ be the root of the rightmost component (tree) of $F$.
    The {\em head} of $\net$, denoted $\headset(\net)$, consists of the $s$ vertices of $F$ on the leftmost path from $x$ to a leaf (i.e., a path that always goes to the leftmost child). Similarly, the {\em tail} of $\net$, denoted $\tailset(\net)$, 
    consists of the $s$ vertices of $F$ on the rightmost path from $x'$ to a leaf. 
\end{definition}


As an example, if $\net$ is as depicted in the top part of \cref{fig:red-net-def}, then $\headset(\net)=\{1,2\}$ and $\tailset(\net)=\{5,7\}$, and if $\net$ is as depicted in the bottom part of \cref{fig:red-net-def}, then $\headset(\net)=\{1,2,3\}$ and $\tailset(\net)=\{9,10,11\}$. 



\begin{definition}
  For each subset $S=\{s_1,\dots,s_r\}$ of $[N]$, write $\up{S}:=\{s_1,\dots,s_{r/3}\}$, $\midd S:=\{s_{r/3+1},\dots,s_{2r/3}\}$, $\down S:=\{s_{2r/3+1},\dots,s_r\}$.\footnote{To avoid floor and ceiling signs, we will assume that $r$ is divisible by $3$.}
\end{definition}

\noindent
Our key lemma is the following.
\begin{lemma}\label{lem:s-net is enough}
    Let $s \ge 1$, $n \ge t \ge 1$ and $r \ge 3t$.
    Suppose $\net=(F,(X_v)_v)$ is an $s$-red-net of order $r$.
    Then, at least one of the following holds.
    \begin{description}
        \item[(a)] There exist $s+1$ distinct vertices $v_1,\dots,v_{s+1}\in V(F)$, along with sets 
        $A_i \subseteq X_{v_i} (\subseteq[N])$, $|A_i| \geq r/3$, such that for every $1 \le i < j \le s+1$, there is no blue $K_{t,t}$ with one part in $A_i$ and the other in $A_j$.
        \item[(b)] There exist $s$ integers $\ell_1,\dots,\ell_s \geq t$ with $\sum_{i=1}^s \ell_i \ge |F|r/3$, and there exist vertex-disjoint blue copies $P_1,\dots,P_s$ of $P_{\ell_1}^t,\dots,P_{\ell_s}^t$, respectively, such that the following holds: there exist two bijections $\sigma:[s]\to\headset(\net)$ and $\pi:[s]\to\tailset(\net)$ such that for each $i\in[s]$, the first $t$ vertices of $P_i$ lie in $\midd{X}_{\sigma(i)}$ and the last $t$ vertices of $P_i$ lie in $\midd{X}_{\pi(i)}$. 
    \end{description} 
\end{lemma}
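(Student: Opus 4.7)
The plan is to prove the lemma by induction on $s$. For the base case $s=1$, $F$ consists of isolated vertices $v_1<\dots<v_m$ with blue cliques $X_{v_i}$ of size $r$, and the no-$K_{t,t}$ condition of \cref{def:s-net} is vacuous. If some pair $v_i,v_j$ admits no blue $K_{t,t}$ between $X_{v_i}$ and $X_{v_j}$, option (a) holds with $A_i=X_{v_i}$ and $A_j=X_{v_j}$; otherwise every consecutive pair admits such a $K_{t,t}$, and chaining the middle thirds $\midd X_{v_1},\dots,\midd X_{v_m}$ via these $K_{t,t}$'s builds a single blue $P^t$ of length at least $mr/3$ whose first $t$ vertices lie in $\midd X_{v_1}$ and last $t$ in $\midd X_{v_m}$.

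For the inductive step $s\ge 2$, let $r_1<\dots<r_m$ be the roots of $F$ and, for each $j$, let $F_j$ be the subforest consisting of the subtrees rooted at the children of $r_j$. Each $F_j$ is an $(s-1)$-red-net of order $r$, and by definition $\headset(\net)=\{r_1\}\cup\headset(F_1)$ and $\tailset(\net)=\{r_m\}\cup\tailset(F_m)$. Apply the induction hypothesis to every $F_j$. If option (a) holds for some $F_j$, it yields $s$ vertices $v_1,\dots,v_s\in V(F_j)$ with pairwise no blue $K_{t,t}$ between the associated $A_i$'s; since every $v_i$ is a descendant of $r_j$, \cref{def:s-net}(2) gives no blue $K_{t,t}$ between $X_{r_j}$ and any $X_{v_i}$, so adding $r_j$ with $A_{s+1}=X_{r_j}$ promotes this to option (a) for $\net$ with $s+1$ vertices.

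The main difficulty arises when option (b) holds for every $F_j$, so that each contributes $s-1$ vertex-disjoint blue paths matching $\headset(F_j)$ with $\tailset(F_j)$. The target is to produce $s$ paths for $\net$: $s-1$ of them by concatenating the sub-path families across consecutive $F_j$'s via blue $K_{t,t}$'s between suitable $\midd X_v$'s at the boundaries between $\tailset(F_j)$ and $\headset(F_{j+1})$, plus one extra path threading through $\midd X_{r_1},\dots,\midd X_{r_m}$ using blue $K_{t,t}$'s between consecutive roots (permitted because distinct roots are incomparable in $F$, so \cref{def:s-net}(2) imposes no restriction between them). When all of the required boundary $K_{t,t}$'s are present, the total length becomes $\sum_j |F_j|r/3+mr/3=|F|r/3$ and the matchings to $\headset(\net),\tailset(\net)$ extend naturally from those of the $F_j$'s.

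The principal obstacle is handling missing boundary $K_{t,t}$'s. Here the plan is a K\"onig-type matching argument on the bipartite graph whose parts are the relevant $\tailset(F_j)$ and $\headset(F_{j+1})$ (or consecutive roots) and whose edges record the presence of a blue $K_{t,t}$ between the corresponding $\midd X_v$'s: either a perfect matching allows chaining as above, or a deficient set---together with ancestors supplied by \cref{def:s-net}(2)---assembles into $s+1$ vertices witnessing option (a). Making this dichotomy resolve cleanly in every subcase, so that one always extracts either the full chaining or a valid $(s+1)$-family, is the main technical challenge.
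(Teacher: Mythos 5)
Your overall architecture coincides with the paper's: induction on $s$, peeling off the roots $x_1<\dots<x_m$, promoting outcome (a) for a sub-net to outcome (a) for $\net$ by adjoining the root's clique, and a Hall-type matching argument at each boundary between consecutive blocks. However, the step you defer as ``the main technical challenge'' is exactly where the proof lives, and as sketched it would not go through. A smaller issue first: the connecting $K_{t,t}$'s cannot be sought between the middle thirds $\midd{X}_u,\midd{X}_v$ (nor between the full cliques, as in your base-case dichotomy). The path being extended ends with $t$ vertices inside $\midd{X}_u$, and in an ordered $P^t_\ell$ the appended vertices must come \emph{after} them, so the dichotomy must be run on $\down{X}_u$ versus $\up{X}_v$; this is also precisely why conclusion (a) only promises $|A_i|\ge r/3$ rather than $|A_i|=r$.

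The more serious gap is your parenthetical ``(or consecutive roots)'', which suggests handling the root-to-root connection separately from the bipartite graph between $\tailset(\net_j)$ and $\headset(\net_{j+1})$. That version fails: if the single pair $\down{X}_{x_j},\up{X}_{x_{j+1}}$ admits no blue $K_{t,t}$, you obtain only \emph{two} sets, and nothing prevents blue $K_{t,t}$'s between $\up{X}_{x_{j+1}}$ and the remaining tail sets of block $j$. The paper's resolution is to first add, in each block $k$, the trivial $s$-th path on $\midd{X}_{x_k}$, and then run Hall's theorem on the single $s\times s$ bipartite graph $H$ between $\{x_{k-1}\}\cup\tailset(\net_{k-1})$ and $\{x_k\}\cup\headset(\net_k)$, with adjacency recording a blue $K_{t,t}$ between $\down{X}_u$ and $\up{X}_v$. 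A Hall violator $S$ together with $T=(\{x_k\}\cup\headset(\net_k))\setminus N_H(S)$ has $|S|+|T|\ge s+1$ elements; across $S$ and $T$, non-adjacency in $H$ forbids blue $K_{t,t}$'s, and \emph{within} each side any two vertices lie on a common root-to-leaf path of $F$, so one is a descendant of the other and \cref{def:s-net} applies. This is the mechanism that always yields either a perfect matching (hence the concatenation) or the $s+1$ pairwise red-dense sets of (a); it also shows that the bijections $\sigma,\pi$ may permute root-paths with subtree-paths across blocks, rather than ``extending naturally'' with the root chain kept intact. Until you commit to this $s\times s$ formulation (or an equivalent one), the case analysis you acknowledge as unresolved does not close.
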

Before proving \Cref{lem:s-net is enough}, let us sketch the proof when $s=1,2$, as these cases are easy to describe and already contain the main ideas. We will also explain how \Cref{lem:s-net is enough} is used to show that there exists a red $K_{s+1}$ or a blue $P_n^t$. 

In the case $s=1$, the forest $F$ consists of isolated vertices, say $x_1 < \dots < x_{|F|}$. 
The key observation is that if for every $1 \leq i \leq |F|-1$, there is a blue $K_{t,t}$ with parts $L_i \subseteq \down{X}_{x_i}$ and $R_i \subseteq \up{X}_{x_{i+1}}$, then we can construct a copy of $P^t_{\ell}$ (for some $\ell$) from these $K_{t,t}$'s by connecting $R_i$ and $L_{i+1}$ inside $X_{x_{i+1}}$, using that the sets $X_{x_i}$ are all blue cliques; see \cref{fig:proof-1-net}. 
Note that this $P^t_{\ell}$ contains the middle part~$\midd{X}_{x_i}$ for every $i$, so $\ell \geq 
\frac{1}{3}\sum_i |X_{x_i}| = |F|r/3$.
This case corresponds to Item (b) in the lemma. On the other hand, if, for some $1 \leq i \leq |F|-1$, there is no blue $K_{t,t}$ with one part in $\down{X}_{x_i}$ and one part in $\up{X}_{x_{i+1}}$, then Item (a) in the lemma holds with $v_1 = x_i, v_2 = x_{i+1}$. Note that $K_{t,t}$-free graphs are sparse, so here we get a bipartite graph which is very dense in red. 
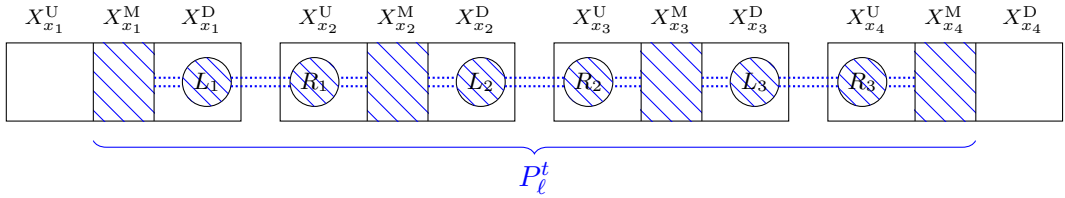
\begin{figure}
    \centering
    \begin{tikzpicture}[scale = 1.3]
        \foreach \i in {1,2,3,4} {
            \coordinate (\i) at (\i*2.8, 0);
            \draw ($(\i) + ({-1.2}, {-0.4})$) rectangle ($(\i) + ({1.2}, {0.4})$);
            \begin{scope}
                \clip ($(\i) + ({-1.2}, {-0.4})$) rectangle ($(\i) + ({1.2}, {0.4})$);
                \clip ($(\i) + ({1.2*cos(105)}, {-1})$) rectangle ($(\i) + ({1.2*cos(75)}, {1})$);
                \foreach \x in {-4,...,100}
                    \draw[xshift=\x*0.2 cm,blue]  (-2,2)--(2,-2);
            \end{scope}
            \draw (\i*2.8-0.8,0.4) node[above] {\scriptsize $\up{X}_{x_{\i}}$};
            \draw (\i*2.8,0.4) node[above] {\scriptsize $\midd{X}_{x_{\i}}$};
            \draw (\i*2.8+0.8,0.4) node[above] {\scriptsize $\down{X}_{x_{\i}}$};
            \coordinate (\i a) at ($(\i) + ({1.2*cos(75)}, {0.4})$);
            \coordinate (\i b) at ($(\i) + ({1.2*cos(105)}, {0.4})$);
            \coordinate (\i c) at ($(\i) + ({1.2*cos(255)}, {-0.4})$);
            \coordinate (\i d) at ($(\i) + ({1.2*cos(285)}, {-0.4})$);
            \coordinate (\i up) at ($(\i) + ({-0.85}, {0})$);
            \coordinate (\i down) at ($(\i) + ({0.85}, {0})$);
            \draw (\i b) -- (\i c);
            \draw (\i a) -- (\i d);
        }
        \foreach \i in {1,2,3} {
            \begin{scope}
                \clip [draw] (\i down) circle (0.25);
                \foreach \x in {-4,...,100}
                    \draw[xshift=\x*0.15 cm,blue]  (-2,2)--(2,-2);
                \draw (\i down) node {\footnotesize $L_{\i}$};
            \end{scope}
            \draw[blue,thick,densely dotted] ($(\i) + ({1.2*cos(75)}, {0.25*sin(170)})$) -- ($(\i down) + ({0.25*cos(170)}, {0.25*sin(170)})$);
            \draw[blue,thick,densely dotted] ($(\i) + ({1.2*cos(75)}, {0.25*sin(-170)})$) -- ($(\i down) + ({0.25*cos(-170)}, {0.25*sin(-170)})$);
        }
        \foreach \j in {2,3,4} {
            \pgfmathtruncatemacro{\i}{\j - 1};
            \begin{scope}
                \clip [draw] (\j up) circle (0.25);
                \foreach \x in {-4,...,100}
                    \draw[xshift=\x*0.15 cm,blue]  (-2,2)--(2,-2);
                \draw (\j up) node {\footnotesize $R_{\i}$};
            \end{scope}
            \draw[blue,thick,densely dotted] ($(\j) + ({-1.2*cos(75)}, {0.25*sin(10)})$) -- ($(\j up) + ({0.25*cos(10)}, {0.25*sin(10)})$);
            \draw[blue,thick,densely dotted] ($(\j) + ({-1.2*cos(75)}, {0.25*sin(-10)})$) -- ($(\j up) + ({0.25*cos(-10)}, {0.25*sin(-10)})$);
            \draw[blue,thick,densely dotted] ($(\i down) + ({0.25*cos(10)}, {0.25*sin(10)})$) -- ($(\j up) + ({0.25*cos(170)}, {0.25*sin(170)})$);
            \draw[blue,thick,densely dotted] ($(\i down) + ({0.25*cos(-10)}, {0.25*sin(-10)})$) -- ($(\j up) + ({0.25*cos(-170)}, {0.25*sin(-170)})$);
        }
        \draw [decorate,decoration={brace,amplitude=5pt,raise=0ex,mirror},color=blue] ($(1) + ({1.2*cos(105)}, {-0.6})$) -- ($(4) + ({1.2*cos(75)}, {-0.6})$) node[midway,yshift=-1.3em]{$P_\ell^t$};

    \end{tikzpicture}
    \caption{Proof of \Cref{lem:s-net is enough} for $s=1$: Every part with a blue shadow induces a blue clique, and every two parts connected by two blue dashed lines induce a blue complete bipartite graph.}
    \label{fig:proof-1-net} 
\end{figure}

Now let us consider the case $s=2$. In this case, the trees in $F$ have depth $1$; namely, each tree consists of a root and leaves connected to the root. 
Let $x_1 < \dots < x_m$ be the roots, and let $Y_k$ be the set of children of $x_k$. 
For each $1 \leq k \leq m$, we would like to apply the case $s=1$ to the set of vertices~$Y_k$. If Item (a) in the lemma holds, then we have vertices $v_1,v_2 \in Y_k$ and sets $A_i \subseteq X_{v_i}$ with no blue $K_{t,t}$ between $A_1,A_2$. Together with $X_{x_k}$, this gives three sets with no blue $K_{t,t}$ between any two, meaning that Item (a) in the lemma holds (for $s=2$). 
(Here we use the definition of an $s$-red-net, which implies that there is no blue $K_{t,t}$ between $X_{x_k}$ and $X_y$ for any $y \in Y_k$.)

So we may assume that when applying \Cref{lem:s-net is enough} to $Y_k$ (with $s=1$), Item (b) holds. This gives a copy $P_k$ of $P_{\ell}^t$ inside $\bigcup_{y \in Y_k} X_y$, where $\ell \geq |Y_k|r/3$. We now take another ``trivial'' $t$'th-power of a path which just consists of all the vertices in $\midd{X}_{x_k}$;
let us denote it by $P'_k$, so $|P'_k| = |\midd{X}_{x_k}|$. 
Our goal now is to connect the ends of $P_k$ and $P'_k$ to the beginnings of $P_{k+1}$ and $P'_{k+1}$. 
This connection forms a matching, i.e., we want to connect $P_k$ to $P_{k+1}$ and $P'_k$ to $P'_{k+1}$, or $P_k$ to $P'_{k+1}$ and $P_k$ to $P'_{k+1}$; see \Cref{fig:proof-2-net}.
Doing this for every $1 \leq k \leq m-1$ gives disjoint blue copies of $P^t_{\ell_1},P^t_{\ell_2}$ (for some $\ell_1,\ell_2$) which together cover $\midd{X}_x$ for every $x \in V(F)$. 
This corresponds to Item (b) in the lemma. 
As $P^t_{\ell_1},P^t_{\ell_2}$ together cover many vertices, one of them must be long, and this will give the desired \nolinebreak blue \nolinebreak $P_n^t$. 

To achieve the aforementioned connection for a specific $1 \leq k \leq m-1$, we do as follows. 
Let $u_k$ be the rightmost vertex in $Y_k\subseteq V(F)$ and let $v_{k+1}$ be the leftmost vertex in $Y_{k+1}\subseteq V(F)$; then the last $t$ vertices of $P_k$ are in $\midd{X}_{u_k}$, and the first $t$ vertices of $P_{k+1}$ are in $\midd{X}_{v_{k+1}}$.
We now define an auxiliary bipartite graph with parts $\{x_k,u_k\}$ and $\{x_{k+1},v_{k+1}\}$, where $u \in \{x_k,u_k\}$ is connected to $v \in \{x_{k+1},v_{k+1}\}$ if there is a blue $K_{t,t}$ with one part in $\down{X}_u$ and one part in $\up{X}_v$. A perfect matching in this bipartite graph gives the desired connection (see \cref{fig:proof-2-net}). On the other hand, if there is no perfect matching, then there is an isolated vertex. So suppose, for example, that $x_{k+1}$ is adjacent to neither $x_k$ nor $u_k$. Then $\down{X}_{x_k},\down{X}_{u_k},\up{X}_{x_{k+1}}$ are three sets such that between any two there is no blue $K_{t,t}$ (recall that there is no blue $K_{t,t}$ between $X_{x_k}$ and $X_{u_k}$ by the definition of an $s$-red-net). So Item (a) in the lemma holds. 
Finally, note that a bipartite graph with no blue $K_{t,t}$ is sparse in blue, and hence very dense in red. Thus, a (large enough) tripartite graph with no blue $K_{t,t}$ between any two parts must contain a red triangle
(we will argue this in \Cref{cor:s-net is enough}). We now proceed with the full proof of \Cref{lem:s-net is enough}.
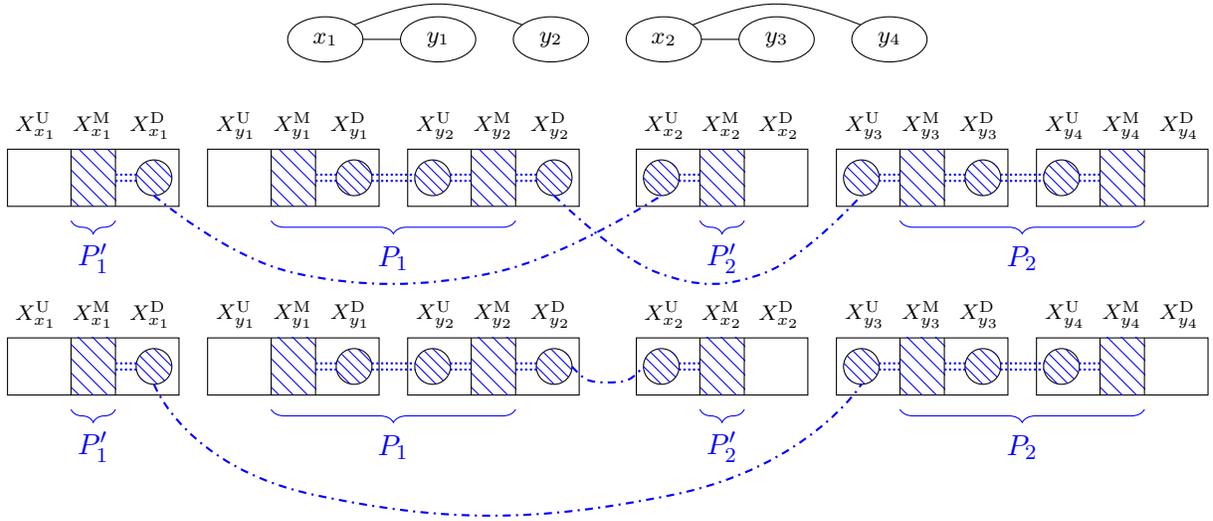
\begin{figure}
    \centering
    \begin{tikzpicture}[scale = 1]
        \coordinate (v1) at (0,0);
        \coordinate (v11) at (1.5,0);
        \coordinate (v12) at (3,0);

        \draw (v1) ellipse(0.5 and 0.3);
        \draw (v11) ellipse(0.5 and 0.3);
        \draw (v12) ellipse(0.5 and 0.3);
      
        \draw (v1) node {\footnotesize  $x_1$};
        \draw (v11) node {\footnotesize $y_1$};
        \draw (v12) node {\footnotesize $y_2$};

        \coordinate (v1R) at ($(v1) + ({0.5}, {0})$);
        \coordinate (v1D) at ($(v1) + ({0.5*cos(40)}, {0.3*sin(40)})$);
        \coordinate (v11L) at ($(v11) + ({-0.5}, {0})$);
        \coordinate (v11R) at ($(v11) + ({0.5}, {0})$);
        \coordinate (v12R) at ($(v12) + ({0.5}, {0})$);
        \coordinate (v12U) at ($(v12) + ({0.5*cos(140)}, {0.3*sin(140)})$);

        \draw (v1R) -- (v11L);
        \draw (v1D)  .. controls ($(v11) + ({0.5*cos(110)}, {0.3*2*sin(110)})$) and ($(v11) + ({0.5*cos(70)}, {0.3*2*sin(70)})$) .. (v12U);

        \coordinate (v2) at (4.5,0);
        \coordinate (v21) at (6,0);
        \coordinate (v22) at (7.5,0);

        \draw (v2) ellipse(0.5 and 0.3);
        \draw (v21) ellipse(0.5 and 0.3);
        \draw (v22) ellipse(0.5 and 0.3);
      
        \draw (v2) node {\footnotesize  $x_2$};
        \draw (v21) node {\footnotesize $y_3$};
        \draw (v22) node {\footnotesize $y_4$};

        \coordinate (v2R) at ($(v2) + ({0.5}, {0})$);
        \coordinate (v2D) at ($(v2) + ({0.5*cos(40)}, {0.3*sin(40)})$);
        \coordinate (v21L) at ($(v21) + ({-0.5}, {0})$);
        \coordinate (v21R) at ($(v21) + ({0.5}, {0})$);
        \coordinate (v22R) at ($(v22) + ({0.5}, {0})$);
        \coordinate (v22U) at ($(v22) + ({0.5*cos(140)}, {0.3*sin(140)})$);

        \draw (v2R) -- (v21L);
        \draw (v2D)  .. controls ($(v21) + ({0.5*cos(110)}, {0.3*2*sin(110)})$) and ($(v21) + ({0.5*cos(70)}, {0.3*2*sin(70)})$) .. (v22U);
    \end{tikzpicture}\\[0.5cm]
    \begin{tikzpicture}[scale = 0.95]
        \foreach \i in {1,2,3} {
            \coordinate (\i) at (\i*2.8, 0);
            \draw ($(\i) + ({-1.2}, {-0.4})$) rectangle ($(\i) + ({1.2}, {0.4})$);
            \begin{scope}
                \clip ($(\i) + ({-1.2}, {-0.4})$) rectangle ($(\i) + ({1.2}, {0.4})$);
                \clip ($(\i) + ({1.2*cos(105)}, {-1})$) rectangle ($(\i) + ({1.2*cos(75)}, {1})$);
                \foreach \x in {-4,...,100}
                    \draw[xshift=\x*0.2 cm,blue]  (-2,2)--(2,-2);
            \end{scope}
            \ifthenelse{\i=1}{
                \draw (\i*2.8-0.8,0.4) node[above] {\scriptsize $\up{X}_{x_{\i}}$};
                \draw (\i*2.8,0.4) node[above] {\scriptsize $\midd{X}_{x_{\i}}$};
                \draw (\i*2.8+0.8,0.4) node[above] {\scriptsize $\down{X}_{x_{\i}}$};
            }{
                \pgfmathtruncatemacro{\j}{\i - 1};
                \draw (\i*2.8-0.8,0.4) node[above] {\scriptsize $\up{X}_{y_{\j}}$};
                \draw (\i*2.8,0.4) node[above] {\scriptsize $\midd{X}_{y_{\j}}$};
                \draw (\i*2.8+0.8,0.4) node[above] {\scriptsize $\down{X}_{y_{\j}}$};
            }                
            \coordinate (\i a) at ($(\i) + ({1.2*cos(75)}, {0.4})$);
            \coordinate (\i b) at ($(\i) + ({1.2*cos(105)}, {0.4})$);
            \coordinate (\i c) at ($(\i) + ({1.2*cos(255)}, {-0.4})$);
            \coordinate (\i d) at ($(\i) + ({1.2*cos(285)}, {-0.4})$);
            \coordinate (\i up) at ($(\i) + ({-0.85}, {0})$);
            \coordinate (\i down) at ($(\i) + ({0.85}, {0})$);
            \draw (\i b) -- (\i c);
            \draw (\i a) -- (\i d);
        }
        \foreach \i in {1,2,3} {
            \begin{scope}
                \clip [draw] (\i down) circle (0.25);
                \foreach \x in {-4,...,100}
                    \draw[xshift=\x*0.15 cm,blue]  (-2,2)--(2,-2);
            \end{scope}
            \draw[blue,thick,densely dotted] ($(\i) + ({1.2*cos(75)}, {0.25*sin(170)})$) -- ($(\i down) + ({0.25*cos(170)}, {0.25*sin(170)})$);
            \draw[blue,thick,densely dotted] ($(\i) + ({1.2*cos(75)}, {0.25*sin(-170)})$) -- ($(\i down) + ({0.25*cos(-170)}, {0.25*sin(-170)})$);
        }
        \foreach \j in {3} {
            \begin{scope}
                \clip [draw] (\j up) circle (0.25);
                \foreach \x in {-4,...,100}
                    \draw[xshift=\x*0.15 cm,blue]  (-2,2)--(2,-2);
            \end{scope}
            \draw[blue,thick,densely dotted] ($(\j) + ({-1.2*cos(75)}, {0.25*sin(10)})$) -- ($(\j up) + ({0.25*cos(10)}, {0.25*sin(10)})$);
            \draw[blue,thick,densely dotted] ($(\j) + ({-1.2*cos(75)}, {0.25*sin(-10)})$) -- ($(\j up) + ({0.25*cos(-10)}, {0.25*sin(-10)})$);
            \pgfmathtruncatemacro{\i}{\j - 1};
            \draw[blue,thick,densely dotted] ($(\i down) + ({0.25*cos(10)}, {0.25*sin(10)})$) -- ($(\j up) + ({0.25*cos(170)}, {0.25*sin(170)})$);
            \draw[blue,thick,densely dotted] ($(\i down) + ({0.25*cos(-10)}, {0.25*sin(-10)})$) -- ($(\j up) + ({0.25*cos(-170)}, {0.25*sin(-170)})$);
        }
        \draw [decorate,decoration={brace,amplitude=5pt,raise=0ex,mirror},color=blue] ($(1) + ({1.2*cos(105)}, {-0.6})$) -- ($(1) + ({1.2*cos(75)}, {-0.6})$) node[midway,yshift=-1.3em]{$P_1'$};
        \draw [decorate,decoration={brace,amplitude=5pt,raise=0ex,mirror},color=blue] ($(2) + ({1.2*cos(105)}, {-0.6})$) -- ($(3) + ({1.2*cos(75)}, {-0.6})$) node[midway,yshift=-1.3em]{$P_1$};

        \foreach \i in {4,5,6} {
            \coordinate (\i) at (\i*2.8+0.4, 0);
            \draw ($(\i) + ({-1.2}, {-0.4})$) rectangle ($(\i) + ({1.2}, {0.4})$);
            \begin{scope}
                \clip ($(\i) + ({-1.2}, {-0.4})$) rectangle ($(\i) + ({1.2}, {0.4})$);
                \clip ($(\i) + ({1.2*cos(105)}, {-1})$) rectangle ($(\i) + ({1.2*cos(75)}, {1})$);
                \foreach \x in {-4,...,100}
                    \draw[xshift=\x*0.2 cm,blue]  (-2,2)--(2,-2);
            \end{scope}
            \ifthenelse{\i=4}{
                \draw (\i*2.8+0.4-0.8,0.4) node[above] {\scriptsize $\up{X}_{x_{2}}$};
                \draw (\i*2.8+0.4,0.4) node[above] {\scriptsize $\midd{X}_{x_{2}}$};
                \draw (\i*2.8+0.4+0.8,0.4) node[above] {\scriptsize $\down{X}_{x_{2}}$};
            }{
                \pgfmathtruncatemacro{\j}{\i - 2};
                \draw (\i*2.8+0.4-0.8,0.4) node[above] {\scriptsize $\up{X}_{y_{\j}}$};
                \draw (\i*2.8+0.4,0.4) node[above] {\scriptsize $\midd{X}_{y_{\j}}$};
                \draw (\i*2.8+0.4+0.8,0.4) node[above] {\scriptsize $\down{X}_{y_{\j}}$};
            }      
            
            \coordinate (\i a) at ($(\i) + ({1.2*cos(75)}, {0.4})$);
            \coordinate (\i b) at ($(\i) + ({1.2*cos(105)}, {0.4})$);
            \coordinate (\i c) at ($(\i) + ({1.2*cos(255)}, {-0.4})$);
            \coordinate (\i d) at ($(\i) + ({1.2*cos(285)}, {-0.4})$);
            \coordinate (\i up) at ($(\i) + ({-0.85}, {0})$);
            \coordinate (\i down) at ($(\i) + ({0.85}, {0})$);
            \draw (\i b) -- (\i c);
            \draw (\i a) -- (\i d);
        }
        \foreach \i in {5} {
            \begin{scope}
                \clip [draw] (\i down) circle (0.25);
                \foreach \x in {-4,...,200}
                    \draw[xshift=\x*0.15 cm,blue]  (-2,2)--(2,-2);
            \end{scope}
            \draw[blue,thick, densely dotted] ($(\i) + ({1.2*cos(75)}, {0.25*sin(170)})$) -- ($(\i down) + ({0.25*cos(170)}, {0.25*sin(170)})$);
            \draw[blue,thick, densely dotted] ($(\i) + ({1.2*cos(75)}, {0.25*sin(-170)})$) -- ($(\i down) + ({0.25*cos(-170)}, {0.25*sin(-170)})$);
            \pgfmathtruncatemacro{\j}{\i + 1};
            \draw[blue,thick, densely dotted] ($(\i down) + ({0.25*cos(10)}, {0.25*sin(10)})$) -- ($(\j up) + ({0.25*cos(170)}, {0.25*sin(170)})$);
            \draw[blue,thick, densely dotted] ($(\i down) + ({0.25*cos(-10)}, {0.25*sin(-10)})$) -- ($(\j up) + ({0.25*cos(-170)}, {0.25*sin(-170)})$);
        }
        \foreach \j in {4,5,6} {
            \begin{scope}
                \clip [draw] (\j up) circle (0.25);
                \foreach \x in {-4,...,200}
                    \draw[xshift=\x*0.15 cm,blue]  (-2,2)--(2,-2);
            \end{scope}
            \draw[blue,thick, densely dotted] ($(\j) + ({-1.2*cos(75)}, {0.25*sin(10)})$) -- ($(\j up) + ({0.25*cos(10)}, {0.25*sin(10)})$);
            \draw[blue,thick, densely dotted] ($(\j) + ({-1.2*cos(75)}, {0.25*sin(-10)})$) -- ($(\j up) + ({0.25*cos(-10)}, {0.25*sin(-10)})$);
        }
        \draw [decorate,decoration={brace,amplitude=5pt,raise=0ex,mirror},color=blue] ($(4) + ({1.2*cos(105)}, {-0.6})$) -- ($(4) + ({1.2*cos(75)}, {-0.6})$) node[midway,yshift=-1.3em]{$P_2'$};
        \draw [decorate,decoration={brace,amplitude=5pt,raise=0ex,mirror},color=blue] ($(5) + ({1.2*cos(105)}, {-0.6})$) -- ($(6) + ({1.2*cos(75)}, {-0.6})$) node[midway,yshift=-1.3em]{$P_2$};
        

        \begin{scope}
            \draw[blue,thick,dash dot] plot [smooth, tension=0.7] coordinates { ($(1down) + ({0.25*cos(270)},{0.25*sin(270)})$) 
($(2)+({0},{-1.33})$)  ($(3)+({0},{-1.33})$)  ($(4up)+({0.25*cos(270)},{0.25*sin(270)})$) };
        \end{scope}
        \begin{scope}
            \draw[blue,thick,dash dot] plot [smooth, tension=0.7] coordinates { ($(3down) + ({0.25*cos(270)},{0.25*sin(270)})$) 
($(4)+({-0.9},{-1.33})$)  ($(4)+({0.6},{-1.33})$)  ($(5up)+({0.25*cos(270)},{0.25*sin(270)})$) };
        \end{scope}
    \end{tikzpicture}
    \begin{tikzpicture}[scale = 0.95]
        \foreach \i in {1,2,3} {
            \coordinate (\i) at (\i*2.8, 0);
            \draw ($(\i) + ({-1.2}, {-0.4})$) rectangle ($(\i) + ({1.2}, {0.4})$);
            \begin{scope}
                \clip ($(\i) + ({-1.2}, {-0.4})$) rectangle ($(\i) + ({1.2}, {0.4})$);
                \clip ($(\i) + ({1.2*cos(105)}, {-1})$) rectangle ($(\i) + ({1.2*cos(75)}, {1})$);
                \foreach \x in {-4,...,100}
                    \draw[xshift=\x*0.2 cm,blue]  (-2,2)--(2,-2);
            \end{scope}
            \ifthenelse{\i=1}{
                \draw (\i*2.8-0.8,0.4) node[above] {\scriptsize $\up{X}_{x_{\i}}$};
                \draw (\i*2.8,0.4) node[above] {\scriptsize $\midd{X}_{x_{\i}}$};
                \draw (\i*2.8+0.8,0.4) node[above] {\scriptsize $\down{X}_{x_{\i}}$};
            }{
                \pgfmathtruncatemacro{\j}{\i - 1};
                \draw (\i*2.8-0.8,0.4) node[above] {\scriptsize $\up{X}_{y_{\j}}$};
                \draw (\i*2.8,0.4) node[above] {\scriptsize $\midd{X}_{y_{\j}}$};
                \draw (\i*2.8+0.8,0.4) node[above] {\scriptsize $\down{X}_{y_{\j}}$};
            }                
            \coordinate (\i a) at ($(\i) + ({1.2*cos(75)}, {0.4})$);
            \coordinate (\i b) at ($(\i) + ({1.2*cos(105)}, {0.4})$);
            \coordinate (\i c) at ($(\i) + ({1.2*cos(255)}, {-0.4})$);
            \coordinate (\i d) at ($(\i) + ({1.2*cos(285)}, {-0.4})$);
            \coordinate (\i up) at ($(\i) + ({-0.85}, {0})$);
            \coordinate (\i down) at ($(\i) + ({0.85}, {0})$);
            \draw (\i b) -- (\i c);
            \draw (\i a) -- (\i d);
        }
        \foreach \i in {1,2,3} {
            \begin{scope}
                \clip [draw] (\i down) circle (0.25);
                \foreach \x in {-4,...,100}
                    \draw[xshift=\x*0.15 cm,blue]  (-2,2)--(2,-2);
            \end{scope}
            \draw[blue,thick,densely dotted] ($(\i) + ({1.2*cos(75)}, {0.25*sin(170)})$) -- ($(\i down) + ({0.25*cos(170)}, {0.25*sin(170)})$);
            \draw[blue,thick,densely dotted] ($(\i) + ({1.2*cos(75)}, {0.25*sin(-170)})$) -- ($(\i down) + ({0.25*cos(-170)}, {0.25*sin(-170)})$);
        }
        \foreach \j in {3} {
            \begin{scope}
                \clip [draw] (\j up) circle (0.25);
                \foreach \x in {-4,...,100}
                    \draw[xshift=\x*0.15 cm,blue]  (-2,2)--(2,-2);
            \end{scope}
            \draw[blue,thick,densely dotted] ($(\j) + ({-1.2*cos(75)}, {0.25*sin(10)})$) -- ($(\j up) + ({0.25*cos(10)}, {0.25*sin(10)})$);
            \draw[blue,thick,densely dotted] ($(\j) + ({-1.2*cos(75)}, {0.25*sin(-10)})$) -- ($(\j up) + ({0.25*cos(-10)}, {0.25*sin(-10)})$);
            \pgfmathtruncatemacro{\i}{\j - 1};
            \draw[blue,thick,densely dotted] ($(\i down) + ({0.25*cos(10)}, {0.25*sin(10)})$) -- ($(\j up) + ({0.25*cos(170)}, {0.25*sin(170)})$);
            \draw[blue,thick,densely dotted] ($(\i down) + ({0.25*cos(-10)}, {0.25*sin(-10)})$) -- ($(\j up) + ({0.25*cos(-170)}, {0.25*sin(-170)})$);
        }
        \draw [decorate,decoration={brace,amplitude=5pt,raise=0ex,mirror},color=blue] ($(1) + ({1.2*cos(105)}, {-0.6})$) -- ($(1) + ({1.2*cos(75)}, {-0.6})$) node[midway,yshift=-1.3em]{$P_1'$};
        \draw [decorate,decoration={brace,amplitude=5pt,raise=0ex,mirror},color=blue] ($(2) + ({1.2*cos(105)}, {-0.6})$) -- ($(3) + ({1.2*cos(75)}, {-0.6})$) node[midway,yshift=-1.3em]{$P_1$};

        \foreach \i in {4,5,6} {
            \coordinate (\i) at (\i*2.8+0.4, 0);
            \draw ($(\i) + ({-1.2}, {-0.4})$) rectangle ($(\i) + ({1.2}, {0.4})$);
            \begin{scope}
                \clip ($(\i) + ({-1.2}, {-0.4})$) rectangle ($(\i) + ({1.2}, {0.4})$);
                \clip ($(\i) + ({1.2*cos(105)}, {-1})$) rectangle ($(\i) + ({1.2*cos(75)}, {1})$);
                \foreach \x in {-4,...,100}
                    \draw[xshift=\x*0.2 cm,blue]  (-2,2)--(2,-2);
            \end{scope}
            \ifthenelse{\i=4}{
                \draw (\i*2.8+0.4-0.8,0.4) node[above] {\scriptsize $\up{X}_{x_{2}}$};
                \draw (\i*2.8+0.4,0.4) node[above] {\scriptsize $\midd{X}_{x_{2}}$};
                \draw (\i*2.8+0.4+0.8,0.4) node[above] {\scriptsize $\down{X}_{x_{2}}$};
            }{
                \pgfmathtruncatemacro{\j}{\i - 2};
                \draw (\i*2.8+0.4-0.8,0.4) node[above] {\scriptsize $\up{X}_{y_{\j}}$};
                \draw (\i*2.8+0.4,0.4) node[above] {\scriptsize $\midd{X}_{y_{\j}}$};
                \draw (\i*2.8+0.4+0.8,0.4) node[above] {\scriptsize $\down{X}_{y_{\j}}$};
            }      
            
            \coordinate (\i a) at ($(\i) + ({1.2*cos(75)}, {0.4})$);
            \coordinate (\i b) at ($(\i) + ({1.2*cos(105)}, {0.4})$);
            \coordinate (\i c) at ($(\i) + ({1.2*cos(255)}, {-0.4})$);
            \coordinate (\i d) at ($(\i) + ({1.2*cos(285)}, {-0.4})$);
            \coordinate (\i up) at ($(\i) + ({-0.85}, {0})$);
            \coordinate (\i down) at ($(\i) + ({0.85}, {0})$);
            \draw (\i b) -- (\i c);
            \draw (\i a) -- (\i d);
        }
        \foreach \i in {5} {
            \begin{scope}
                \clip [draw] (\i down) circle (0.25);
                \foreach \x in {-4,...,200}
                    \draw[xshift=\x*0.15 cm,blue]  (-2,2)--(2,-2);
            \end{scope}
            \draw[blue,thick, densely dotted] ($(\i) + ({1.2*cos(75)}, {0.25*sin(170)})$) -- ($(\i down) + ({0.25*cos(170)}, {0.25*sin(170)})$);
            \draw[blue,thick, densely dotted] ($(\i) + ({1.2*cos(75)}, {0.25*sin(-170)})$) -- ($(\i down) + ({0.25*cos(-170)}, {0.25*sin(-170)})$);
            \pgfmathtruncatemacro{\j}{\i + 1};
            \draw[blue,thick, densely dotted] ($(\i down) + ({0.25*cos(10)}, {0.25*sin(10)})$) -- ($(\j up) + ({0.25*cos(170)}, {0.25*sin(170)})$);
            \draw[blue,thick, densely dotted] ($(\i down) + ({0.25*cos(-10)}, {0.25*sin(-10)})$) -- ($(\j up) + ({0.25*cos(-170)}, {0.25*sin(-170)})$);
        }
        \foreach \j in {4,5,6} {
            \begin{scope}
                \clip [draw] (\j up) circle (0.25);
                \foreach \x in {-4,...,200}
                    \draw[xshift=\x*0.15 cm,blue]  (-2,2)--(2,-2);
            \end{scope}
            \draw[blue,thick, densely dotted] ($(\j) + ({-1.2*cos(75)}, {0.25*sin(10)})$) -- ($(\j up) + ({0.25*cos(10)}, {0.25*sin(10)})$);
            \draw[blue,thick, densely dotted] ($(\j) + ({-1.2*cos(75)}, {0.25*sin(-10)})$) -- ($(\j up) + ({0.25*cos(-10)}, {0.25*sin(-10)})$);
        }
        \draw [decorate,decoration={brace,amplitude=5pt,raise=0ex,mirror},color=blue] ($(4) + ({1.2*cos(105)}, {-0.6})$) -- ($(4) + ({1.2*cos(75)}, {-0.6})$) node[midway,yshift=-1.3em]{$P_2'$};
        \draw [decorate,decoration={brace,amplitude=5pt,raise=0ex,mirror},color=blue] ($(5) + ({1.2*cos(105)}, {-0.6})$) -- ($(6) + ({1.2*cos(75)}, {-0.6})$) node[midway,yshift=-1.3em]{$P_2$};


        \begin{scope}
            \draw[blue,thick,dash dot] plot [smooth, tension=0.9] coordinates { ($(1down) + ({0.25*cos(270)},{0.25*sin(270)})$) 
($(3)+({-2.5},{-1.8})$)  ($(3)+({2.5},{-1.8})$)  ($(5up)+({0.25*cos(270)},{0.25*sin(270)})$) };
        \end{scope}
        \begin{scope}
            \draw[blue,thick,dash dot] plot [smooth, tension=0.6] coordinates { ($(3down) + ({0.25*cos(0)},{0.25*sin(0)})$)  ($(3down) + ({0.5},{-0.20})$)  ($(4up) + ({-0.5},{-0.2})$)  ($(4up)+({0.25*cos(180)},{0.25*sin(180)})$) };
        \end{scope}
    \end{tikzpicture}
    \caption{Proof of \Cref{lem:s-net is enough} for $s=2$: The top picture shows two trees (of depth one) which we want to connect. 
    The middle and bottom show the two ways of connecting $P_1,P_1'$ to $P_2,P_2'$ (the connections are the blue dotted curves).
    Every part with a blue shadow induces a blue clique and every two parts connected by a single blue dash-dotted line induce a blue complete bipartite graph.}
    \label{fig:proof-2-net} 
\end{figure}

\begin{proof}[Proof of \Cref{lem:s-net is enough}]
    The proof is by induction on $s$.
    First, for the base case $s=1$, $F$ consists of $|F|$ isolated vertices $x_1<\dots<x_{|F|}$. Note that $\mathcal{H}(\net) = \{x_1\}$ and $\mathcal{T}(\net) = \{x_{|F|}\}$.
    If, for some $1 \leq i \leq |F|-1$, there is no blue $K_{t,t}$ with one part in $\down{X}_{x_i}$ and the other in $\up{X}_{x_{i+1}}$, then (a) is satisfied with $v_1 := x_i$ and $v_2 := x_{i+1}$ with $A_1 := \down{X}_{x_i}$ and $A_2 := \up{X}_{x_{i+1}}$.
    Otherwise, for every $1 \leq i \leq |F|-1$, there exists a blue $K_{t,t}$ with one part $L_i \subset \down{X}_{x_i}$ and the other part $R_i \subset \up{X}_{x_{i+1}}$.
    As each $X_{x_i}$ forms a blue clique in $K_N$, we can connect these $|F|-1$ blue $K_{t,t}$'s by $$\midd{X}_{x_1}\to L_1\to R_1\to \midd{X}_{x_2}\to L_2\to R_2\to \midd{X}_{x_3}\to \dots\to \midd{X}_{x_{|F|-1}}\to L_{|F|-1}\to R_{|F|-1}\to \midd{X}_{x_{|F|}},$$ which forms a blue $P_{\ell}^t$, for $\ell \ge |F|r/3$, with first $t$ vertices in $X_{x_1}^M$ and last $t$ vertices in $X_{x_{|F|}}^M$.
    Thus, (b) is satisfied. 
    This completes the proof of the base case $s=1$.

    For the inductive step, let $s \ge 2$, and suppose that the lemma holds for $s-1$.
    We assume that (a) does not hold and show that (b) must hold.
    Let $x_1<\dots<x_m$ be the roots of the components (trees) in $F$.
    For each $k \in [m]$, let $F_k$ denote the forest obtained from the tree rooted at $x_k$ by deleting $x_k$ (so the components of $F_k$ are the trees rooted at the children of $x_k$). Then $F_k$ is a well-ordered balanced forest of depth $s-2$. Let $\net_k$ denote the $(s-1)$-red-net $(F_k,(X_v)_{v \in F_k})$.
    Note that $\mathcal{H}(\net) = \{x_1\} \cup \mathcal{H}(\net_1)$ and $\mathcal{T}(\net) = \{x_m\} \cup \mathcal{T}(\net_m)$. We will apply the induction hypothesis to $\net_k$ with $s-1$ in place of $s$. We now show that Item (b) of the lemma must hold:
    \begin{claim}
        For each $k \in [m]$, (b) holds for $\net_k$ in terms of $s-1$.
    \end{claim}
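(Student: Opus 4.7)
The plan is to prove the claim by contradiction, by showing that if Item (a) held for some $\net_k$ (with parameter $s-1$), then we could lift it to Item (a) for $\net$ (with parameter $s$), contradicting the standing assumption that Item (a) fails for $\net$.

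First, I would verify that $\net_k$ really is an $(s-1)$-red-net of order $r$, so that the induction hypothesis applies. This is immediate from the construction: deleting the root $x_k$ from the $k$-th tree of $F$ turns it into a balanced well-ordered forest of depth $s-2$, the sets $(X_v)_{v\in V(F_k)}$ are unchanged (so each is still a blue clique of size $r$), condition~1 of \Cref{def:s-net} is inherited from $\net$, and condition~2 is also inherited because the descendant relation in $F_k$ is a restriction of the descendant relation in $F$. Moreover, we have the key auxiliary fact that every $v \in V(F_k)$ is a descendant of $x_k$ in $F$, so by condition~2 of \Cref{def:s-net} applied to $\net$, there is no blue $K_{t,t}$ with one part in $X_{x_k}$ and the other in $X_v$.

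Now assume for contradiction that Item (a) of \Cref{lem:s-net is enough} holds for $\net_k$ with parameter $s-1$. This yields $s$ distinct vertices $v_1,\dots,v_s \in V(F_k)$ and sets $A_i \subseteq X_{v_i}$ with $|A_i| \geq r/3$ such that there is no blue $K_{t,t}$ between $A_i$ and $A_j$ for any $1\leq i<j\leq s$. Set $v_{s+1} := x_k$ and $A_{s+1} := X_{x_k}$ (or any $r/3$-subset thereof); then $v_1,\dots,v_{s+1}$ are distinct vertices of $V(F)$ and each $A_i \subseteq X_{v_i}$ has size at least $r/3$. For any $1 \leq i \leq s$, the absence of a blue $K_{t,t}$ between $A_i$ and $A_{s+1}$ follows from the auxiliary fact above (since $A_{s+1} \subseteq X_{x_k}$ and $A_i \subseteq X_{v_i}$ with $v_i$ a descendant of $x_k$). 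Together with the property inherited from $\net_k$, we see that there is no blue $K_{t,t}$ between any two of $A_1,\dots,A_{s+1}$. This is exactly Item (a) for $\net$ with parameter $s$, contradicting the assumption made at the start of the inductive step.

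Hence Item (b) must hold for $\net_k$ (with parameter $s-1$), which proves the claim. The only nontrivial point is ensuring that the role of $x_k$ as an ``extra'' vertex is valid, namely that $x_k$ is not already among $v_1,\dots,v_s$ (it is not, since $x_k \notin V(F_k)$) and that the no-blue-$K_{t,t}$ property between $X_{x_k}$ and each $A_i$ is automatic; both follow directly from how $F_k$ is obtained by deleting the root of the $k$-th tree of $F$.
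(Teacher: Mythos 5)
Your proof is correct and follows essentially the same route as the paper: assume (a) holds for $\net_k$ with parameter $s-1$, adjoin $x_k$ with the set $X_{x_k}$ (using condition 2 of \cref{def:s-net} to rule out blue $K_{t,t}$'s between $X_{x_k}$ and each $A_i$), and derive (a) for $\net$ with parameter $s$, contradicting the standing assumption. The extra verification that $\net_k$ is a valid $(s-1)$-red-net is a reasonable inclusion, though the paper handles it in the surrounding text rather than inside the claim's proof.
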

    \begin{proof}
        Suppose not. 
        Then by the induction hypothesis for $\net_k$ (with $s-1$ in place of $s$), (a) must hold for $\net_k$, i.e. there exist $v_1,\dots,v_s \in V(F_k)$ and $A_1,\dots,A_s$ with $A_i\subset X_{v_i}$ and $|A_i|\ge |X_{v_i}|/3$, such that for every $1 \le i < j \le s$, there is no blue $K_{t,t}$ with one part in $A_i$ and the other in $A_j$. Note that for every $1 \leq i \leq s$, there is no blue $K_{t,t}$ with one part in $X_{x_k}$ and the other in $A_i$, by the definition of an $s$-red-net (as $v_i$ is a descendant of $x_k$). 
        So the $s+1$ vertices $x_k,v_1,\dots,v_s$, along with the sets $X_{x_k},A_1,\dots,A_{s}$, satisfy (a), contradicting our assumption that (a) does not hold for~$\net$.
    \end{proof}
    By the above claim, for each $k \in [m]$, there exist integers $\ell_{k,1},\dots,\ell_{k,s-1} \geq t$ with $\sum_{i=1}^{s-1} \ell_{k,i} \ge |F_k|r/3$, there exist vertex-disjoint blue copies $P_{k,1},\dots,P_{k,s-1}$ of $P_{\ell_{k,1}}^t,\dots,P_{\ell_{k,s-1}}^t$, respectively, and there exist two bijections $\sigma_k:[s-1]\to\headset(\net_k)$ and $\pi_k:[s-1]\to\tailset(\net_k)$ such that for each $i\in[s-1]$, the first $t$ vertices of $P_{k,i}$ lie in $\midd{X}_{\sigma_k(i)}$ and the last $t$ vertices of $P_{k,i}$ lie in $\midd{X}_{\pi_k(i)}$.

    We now add an additional blue copy of $P^t_{\ell_{k,s}}$ with $\ell_{k,s}:=|\midd{X}_{x_k}|=r/3\ge t$, as follows: let $P_{k,s}$ be the copy of $P_{\ell_{k,s}}^t$ on all the vertices of $\midd{X}_{x_k}$ (recall that $X_{x_k}$ induces a blue clique by \cref{def:s-net}). 
    It will be convenient to set $\headset'(\net_k):=\{x_k\}\cup\headset(\net_k)$ and $\tailset'(\net_k):=\{x_k\}\cup\tailset(\net_k)$. 
    Note that $\headset(\net)=\headset'(\net_1)$ and $\tailset(\net) = \tailset'(\net_m)$.   
    Also, with a slight abuse of notation, we extend $\sigma_k$ and $\pi_k$ by setting $\sigma_k(s) = x_k$ and $\pi_k(s) = x_k$, to get bijections $\sigma_k : [s] \rightarrow \headset'(\net_k)$ and $\pi_k : [s] \rightarrow \tailset'(\net_k)$. 
    Note that $\sum_{i=1}^{s} \ell_{k,i} \ge (|F_k|+1)r/3$ and that $P_{k,1},\dots,P_{k,s}$ are vertex-disjoint blue copies of $P_{\ell_{k,1}}^t,\dots,P_{\ell_{k,s}}^t$, respectively, such that for each $i\in[s]$, the first $t$ vertices of $P_{k,i}$ are in $\midd{X}_{\sigma_k(i)}$ and the last $t$ vertices of $P_{k,i}$ are in $\midd{X}_{\pi_k(i)}$

    To prove that Item (b) holds, we need to ``connect" $P_{k,1},\dots,P_{k,s}$ to $P_{k+1,1},\dots,P_{k+1,s}$ for every $1 \leq k \leq m-1$. More precisely, we will prove the following by induction on $k$:

    \begin{description}
        \item[(b')]
        For every $1 \leq k \leq m$, there exist integers $\ell_1,\dots,\ell_s \geq t$ with $\sum_{i=1}^s \ell_i \ge \sum_{j=1}^k (|F_j|+1)r/3$, and there exist vertex-disjoint copies $P_1,\dots,P_s$ of $P_{\ell_1}^t,\dots,P_{\ell_s}^t$, respectively, such that the following holds: there exist two bijections $\sigma:[s]\to \headset'(\net_1)$ and $\pi:[s]\to \tailset'(\net_k)$ such that for each $i\in[s]$, the first $t$ vertices of $P_i$ lie in~$\midd{X}_{\sigma(i)}$ and  the last $t$ vertices of $P_i$ lie in~$\midd{X}_{\pi(i)}$. 
    \end{description}
    Note that $\sum_{j=1}^m (|F_j|+1) = |F|$. 
    Hence, by setting $k=m$ in (b'), we get (b).

    So it remains to prove (b'). 
    In the base case $k=1$, we take $P_1,\dots,P_s$ to be $P_{1,1},\dots,P_{1,s}$ and $\sigma := \sigma_1, \pi := \pi_1$.
    For the inductive step, suppose $k \ge 2$ and we have found $P_1,\dots,P_s$ and $\sigma,\pi$ satisfying (b') for $k-1$.
    Our goal is to extend $P_1,\dots,P_s$ by $P_{k,1},\dots,P_{k,s}$ to obtain $P_1',\dots,P_s'$ satisfying (b') for $k$. We will also define appropriate $\sigma',\pi'$.
    To this end, we define an auxiliary bipartite graph $H$ with sides $\tailset'(\net_{k-1})$ and $\headset'(\net_k)$ such that $u\in\tailset'(\net_{k-1})$ is adjacent to $v\in \headset'(\net_k)$ in $H$ if there exists a blue $K_{t,t}$ with one part in $\down{X}_{u}$ and the other in $\up{X}_v$.
    \begin{claim}\label{claim:perfect-matching}
        $H$ has a perfect matching.
    \end{claim}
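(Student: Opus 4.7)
The plan is to deduce \Cref{claim:perfect-matching} from K\"onig's theorem applied to the bipartite graph $H$. Both sides of $H$ have size exactly $s$, so if $H$ had no perfect matching, a minimum vertex cover of $H$ would have size at most $s-1$ and its complement would be an independent set $I \subseteq V(H)$ with $|I| \geq s+1$. We will show that $I$ already witnesses Item (a) of \Cref{lem:s-net is enough} for the original net $\net$, contradicting our standing assumption that (a) fails for $\net$.

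To turn $I$ into such a witness, for each $v \in I \cap \tailset'(\net_{k-1})$ we set $A_v := \down{X}_v$, and for each $v \in I \cap \headset'(\net_k)$ we set $A_v := \up{X}_v$; each such $A_v$ has size $r/3$ and is contained in $X_v$, as required by Item (a). For a pair $u,v \in I$ lying on opposite sides of $H$, non-adjacency in $H$ is by the very definition of $H$ the statement that no blue $K_{t,t}$ has one part in $A_u$ and the other in $A_v$, so this case is immediate.

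The remaining case is a pair $u,v \in I$ on the same side of $H$, and this is where the definitions of head and tail (\Cref{def:head-tail}) come in: by construction, $\tailset'(\net_{k-1}) = \{x_{k-1}\} \cup \tailset(\net_{k-1})$ lies on a single root-to-leaf path of the tree in $F$ rooted at $x_{k-1}$ (the rightmost such path), and symmetrically $\headset'(\net_k) = \{x_k\} \cup \headset(\net_k)$ lies on the leftmost root-to-leaf path of the tree rooted at $x_k$. Consequently any two vertices on a common side of $H$ are in ancestor-descendant relation in $F$, so Item 2 of \Cref{def:s-net} rules out blue $K_{t,t}$'s between $X_u$ and $X_v$, a fortiori between $A_u \subseteq X_u$ and $A_v \subseteq X_v$.

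Selecting any $s+1$ elements of $I$ together with their associated $A_v$'s then produces precisely the configuration forbidden by our assumption that (a) fails for $\net$, yielding the desired contradiction. The only non-routine ingredients are K\"onig's theorem and the path structure of head and tail recalled above; the principal obstacle is simply verifying that these combine cleanly across the two sides of $H$, and no further input is needed.
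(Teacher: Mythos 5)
Your proof is correct and essentially identical to the paper's: the paper uses Hall's theorem to produce a set $S\subseteq\tailset'(\net_{k-1})$ and $T=\headset'(\net_k)\setminus N_H(S)$ with $|S|+|T|\ge s+1$ and no $H$-edges between them, which is exactly the independent set of size $\ge s+1$ you extract via K\"onig's theorem, and the verification of Item (a) (opposite sides via the definition of $H$, same side via the root-to-leaf path structure of head and tail together with Item 2 of \Cref{def:s-net}) is the same in both arguments.
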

    \begin{proof}
        Suppose not.
        By Hall's marriage theorem, there exists a subset $S \subseteq \tailset'(\net_{k-1})$ with $|N_H(S)|<|S|$, where $N_H(S)$ is the neighborhood of $S$ in $H$.
        Take $T=\headset'(\net_k) \backslash N_H(S)$.
        Then there is no edge in $H$ between $S$ and $T$. Hence, for each $u \in S$ and each $v \in T$, there is no blue $K_{t,t}$ with one part in $\down{X}_{u}$ and the other in $\up{X}_v$.
        Observe also that for distinct $u,v\in \tailset'(\net_{k-1})$ or $u,v\in \headset'(\net_{k})$, there is no blue $K_{t,t}$ with one part in $X_{u}$ and the other in $X_v$. This follows from \cref{def:s-net}, since $\tailset'(\net_{k-1})$ and $\headset'(\net_{k})$ are paths from a root to a leaf in $F$, and so, for every two $u,v \in \tailset'(\net_{k-1})$ or $u,v\in \headset'(\net_{k})$, it holds that $u$ is a descendant of $v$ or vice versa. 
        Also, note that $|S|+|T|=|S|+s-|N_H(S)|\ge s+1$.
        So we see that the (at least $s+1$) vertices in $S \cup T$, along with the sets $(\down{X}_u)_{u\in S}$ and $(\up{X}_{v})_{v\in T}$, satisfy Item (a) in the lemma. This contradicts our assumption that (a) does not hold for $\net$.
    \end{proof}
    By the above claim, there exists a bijection $\tau:\tailset'(\net_{k-1})\to\headset'(\net_k)$ that specifies the perfect matching of $H$.
    Fix $i \in [s]$. 
    Due to (b') for $P_1,\dots,P_s$ (with $k-1$ in place of $k$), the last $t$ vertices of $P_i$ lie in $\midd{X}_{u}$ for $u=\pi(i) \in \tailset'(\net_{k-1})$ (and this is the only $u$ with this property). 
    Set $v := \tau(u)\in\headset'(\net_k)$.
    We know that the first $t$ vertices of $P_{k,j}$ lie in $\midd{X}_{v}$ for $j=\sigma_k^{-1}(v)$ (and this is the only $j$ with this property).
    We have $x_{k-1}<x_k$, and hence $V(F_{k-1}) < V(F_k)$, as $F$ is well-ordered. 
    Therefore, $u < v$, since $u \in V(F_{k-1})\cup\{x_{k-1}\}$ and $v \in V(F_k)\cup\{x_{k}\}$. 
    Now, by \cref{def:s-net}, we have $X_u < X_{v}$.
    As $(u, v) \in E(H)$, there is some blue $K_{t,t}$ with parts $L$ and $R$ such that $L\subseteq \down{X}_u$ and $R \subseteq \up{X}_{v}$.
    Also, both $X_u$ and $X_{v}$ are blue cliques. 
    Thus, we can extend $P_i$ by $P_i\to L\to R\to P_{k,j}$, giving a blue copy of $P_{\ell_i'}^t$ with $\ell_i' > \ell_i + \ell_{k,j}$. We denote this copy by $P'_i$.
    Doing the above for all $i \in [s]$, we obtain blue copies $P_1',\dots,P_s'$ of $P_{\ell_1'}^t,\dots,P_{\ell_s'}^t$, respectively,
    where
    \[
        \sum_{i=1}^s \ell_i' \geq
        \sum_{i=1}^s \ell_i + \sum_{i=1}^s \ell_{k,i}
        \ge \sum_{j=1}^{k-1} (|F_j|+1)r/3 + (|F_k|+1)r/3
        = \sum_{j=1}^{k} (|F_j|+1)r/3.
    \]
    As $\tau$ specifies a perfect matching in $H$, $P'_1,\dots,P'_s$ are vertex-disjoint. 
    Clearly, for every $1 \le i \le s$, the first $t$ vertices of $P_i'$ are the same as those of $P_i$, and thus lie in $\midd{X}_{\sigma(i)}$. The last $t$ vertices of $P_i'$ are the same as those of $P_{k,j}$ for $j=\sigma_k^{-1}(\tau(\pi(i)))$, and thus lie in $\midd{X}_{\pi_k(j)}$.
    Then $P'_1,\dots,P'_s$, $\sigma':=\sigma$ and $\pi':=\pi_k\circ\sigma_k^{-1}\circ\tau\circ\pi$ satisfy (b') for $k$. 
    This completes the inductive step, thus proving the \nolinebreak lemma.
\end{proof}

\begin{lemma} \label{cor:s-net is enough}
    Let $s \ge 1$, $n \ge t \ge 1$.
    Suppose $\net=(F,(X_v)_v)$ is an $s$-red-net of order $r\ge 3(4s^2)^t$ with $|F| \ge 3sn/r$.
    Then, $K_N$ contains a red $K_{s+1}$ or a blue $P_n^t$.
\end{lemma}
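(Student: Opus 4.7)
The plan is to apply \cref{lem:s-net is enough} to $\net$ and analyze its two outcomes separately.

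In outcome (b), we obtain vertex-disjoint blue copies of $P_{\ell_1}^t,\dots,P_{\ell_s}^t$ with $\sum_{i=1}^s \ell_i \ge |F|r/3 \ge sn$, using the hypothesis $|F| \ge 3sn/r$. By the pigeonhole principle, some $\ell_i \ge n$, yielding a blue copy of $P_n^t$.

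In outcome (a), we have $s+1$ pairwise disjoint sets $A_1,\dots,A_{s+1}$, each of size at least $r/3 \ge (4s^2)^t$, with no blue $K_{t,t}$ between any pair; the plan is then to extract a transversal red $K_{s+1}$ by greedy selection. Initialize $B_i := A_i$, and at step $i=1,\dots,s+1$ choose $x_i \in B_i$ whose blue-degree into $B_j$ is at most $|B_j|/s$ for every $j > i$, then update $B_j \leftarrow N_{\mathrm{red}}(x_i) \cap B_j$ for each $j > i$. The existence of a suitable $x_i$ at each step follows from a standard K\H{o}v\'ari--S\'os--Tur\'an-style double-counting: counting pairs $(v, T)$ where $v \in B_i$ and $T \subseteq B_j$ is a $t$-subset contained in the blue-neighborhood of $v$, the $K_{t,t}$-freeness bounds the number of $v \in B_i$ with blue-degree at least $|B_j|/s$ in $B_j$ by $(t-1)\binom{|B_j|}{t}/\binom{|B_j|/s}{t} = O_t(s^t)$; summing over the at most $s$ indices $j > i$ gives at most $O_t(s^{t+1})$ ``bad'' vertices. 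Meanwhile each $B_j$ shrinks by a factor of at most $1-1/s$ per step, so $|B_j| \ge (r/3)(1-1/s)^s = \Omega(r)$ throughout the process for $s \ge 2$ (the case $s=1$ is immediate: if no red edge existed between $A_1$ and $A_2$ of sizes $\ge t$, we would already have a blue $K_{t,t}$). Under the hypothesis $r \ge 3(4s^2)^t$, the available $|B_i|$ comfortably exceeds the bad-count $O_t(s^{t+1})$, so a good $x_i$ always exists, and the resulting $x_1,\dots,x_{s+1}$ form the desired red $K_{s+1}$.

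The main technical hurdle is nailing down the constants in the greedy argument in outcome (a): the blue-degree threshold $|B_j|/s$ is chosen so that the double-counting penalty $O_t(s^t)$ and the geometric shrinkage factor $(1-1/s)^s = \Theta(1)$ are together absorbed by the $(4s^2)^t$ factor in the hypothesis on $r$. This balance (smaller thresholds shrink the sets too quickly; larger thresholds make the bad-count blow up) is essentially what pins down the exponent in the lower bound on $r$ stated in the lemma.
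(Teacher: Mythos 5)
Your proposal is correct and follows essentially the same route as the paper: apply \Cref{lem:s-net is enough}, dispose of outcome (b) by averaging over the $\ell_i$ exactly as the paper does, and in outcome (a) use the absence of blue $K_{t,t}$'s (via a K\H{o}v\'ari--S\'os--Tur\'an-type count) to extract a transversal red $K_{s+1}$ from $A_1,\dots,A_{s+1}$. The only difference is cosmetic: you build the red clique greedily with a blue-degree threshold of $|B_j|/s$ and track the shrinkage of the sets, whereas the paper simply bounds the blue density between each pair $A_i,A_j$ by $1/\binom{s+1}{2}$ and takes a union bound over a uniformly random transversal $v_1\in A_1,\dots,v_{s+1}\in A_{s+1}$; both arguments rest on the same sparsity estimate, and your constants do fit under the hypothesis $r \ge 3(4s^2)^t$.
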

\begin{proof}
    We apply \cref{lem:s-net is enough} to $\net$, and consider the following two cases.
    \begin{description}
        \item[Case 1:] (a) holds for $\net$, i.e. there exist distinct subsets $A_1,A_2,\dots,A_{s+1}\subseteq [N]$, each of size $a=r/3$, such that for all $1 \le i < j \le s+1$, there is no blue $K_{t,t}$ with one part in $A_i$ and the other in $A_j$.
        Fix any $1 \le i < j \le s+1$.
        By the K\H{o}vari-S\'os-Tur\'an theorem~\cite{KST}, we have the following bound for the number of blue edges between $A_i$ and $A_j$:
        $$
        e_{\text{blue}}(A_i,A_j) 
        \leq z(a \times a, K_{t,t}) 
        < t^{1/t}a^{2-1/t}+ta
        \leq a^2/\binom{s+1}{2}.
        $$
        In the last inequality, we used the fact that $a = r/3 \ge (4s^2)^t \ge (2s^2)^tt$.
        Now, sample vertices $v_1\in A_1, \dots,v_{s+1}\in A_{s+1}$ independently and uniformly at random. 
        For every $1 \le i < j \le s+1$, the probability that $(v_i,v_j)$ is blue is smaller than $1/\binom{s+1}{2}$.
        Taking a union bound over all $i,j$, we get that with positive probability, $v_1,\dots,v_{s+1}$ form a red $K_{s+1}$.
        \item[Case 2:] (b) holds for $\net$, i.e. there exist $P_1,\dots,P_s$ such that for each $i \in [s]$, $P_i$ is a blue copy of $P_{\ell_i}^t$ for some $\ell_i \ge t$, and $\sum_{i=1}^s \ell_i \ge |F|r/3$.
        By averaging, there is $i\in[s]$ such that $\ell_{i} \ge |F|r/(3s) \ge n$. 
        Thus, $P_{i}$ contains a blue copy of $P_n^t$.
    \end{description}    \vspace{-0.5cm}\end{proof}

\subsection{Putting it all together}
\begin{proof}[Proof of \Cref{thm:clique vs power-path}]
Set $r = 3(4s^2)^t$, $M = 2^{s-1}n$, and $N = M\cdot R(K_{s+1},K_{sr})$. 
Note that $R(K_{s+1},K_m) \leq \binom{m+s-1}{s} \leq m^s$ (by the Erd\H{o}s-Szekeres bound~\cite{ES}). Hence,
$
N \leq 2^{s-1} n\cdot (rs)^s  \leq (24s^3)^{st} n.
$
Fix a red/blue edge-coloring of $K_N$ and suppose by contradiction that there is no red~$K_{s+1}$ and no blue~$P_n^t$. Then every $R(K_{s+1},K_{sr})$ vertices contain a blue clique of size $sr$. 
As $N = M \cdot R(K_{s+1},K_{sr})$, this means that we can find blue cliques $V_1 < \dots < V_M$ of size $sr$ each. Partition $V_i = X_i^{(0)} \cup \dots \cup X_i^{(s-1)}$ with $|X_i^{(j)}| = r$ for all $0 \leq j \leq s-1$, and $X_i^{(s-1)} < X_i^{(s-2)} < \dots < X_i^{(0)}$. 

For each $1 \leq i \leq M$ and $0 \leq j \leq s-2$, let $\chi_j(i)$ be the maximum $\ell$ such that there exists a blue copy of $P_\ell^t$ whose last $t$ vertices belong to $X_i^{(j+1)} \cup \dots \cup X_i^{(s-1)}$. 
By definition, for every $1 \le i \le M$,
\begin{equation}\label{eq:chi_functions monotone}
\chi_0(i) \geq \chi_1(i) \geq \dots \geq \chi_{s-2}(i) \geq t.
\end{equation}
Also, $\chi_j(i) < n$ for all $j$ because there is no blue $P_n^t$. By \cref{lem:chi-functions} (with $q = s-1$), there is a $(\chi_0,\dots,\chi_{s-2})$-forest $F$ with $|L(F)| \geq M/2^{s-2} - n \geq n$. The following is the key property we need:
    \begin{claim}\label{claim:blue K_{t,t}}
    	Let $0 \leq d < d' \leq s-1$, let $a \in F$ at depth $d$ and let $a'$ be a descendant of $a$ at depth $d'$. Then there is no blue $K_{t,t}$ with one part in $X_a^{(d)}$ and the other part in $X_{a'}^{(d')}$.  
    \end{claim}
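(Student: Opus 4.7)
My plan is to argue by contradiction. Suppose there is a blue $K_{t,t}$ with parts $L=\{v_1<\dots<v_t\}\subseteq X_a^{(d)}$ and $R=\{w_1<\dots<w_t\}\subseteq X_{a'}^{(d')}$. Note that $0\leq d<d'\leq s-1$ automatically gives $d\leq s-2$ and $d'\geq 1$, so both $\chi_d(a)$ and $\chi_{d'-1}(a')$ are defined. I will exhibit a blue $P^t_{\chi_d(a)+2t}$ whose last $t$ vertices lie in $X_{a'}^{(d')}\subseteq X_{a'}^{(d')}\cup\cdots\cup X_{a'}^{(s-1)}$, thereby showing $\chi_{d'-1}(a')\geq \chi_d(a)+2t$. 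Against this I will use the $(\chi_0,\dots,\chi_{s-2})$-forest structure to deduce $\chi_{d'-1}(a')\leq \chi_d(a)$, a contradiction.

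For the path extension, start with a blue $P^t_{\chi_d(a)}$ realizing $\chi_d(a)$, whose last $t$ vertices $u_1<\dots<u_t$ lie in $X_a^{(d+1)}\cup\cdots\cup X_a^{(s-1)}$. Because $V_a$ is a blue clique and $X_a^{(d+1)}\cup\cdots\cup X_a^{(s-1)}$ sits strictly to the left of $X_a^{(d)}$ in the ordering $X_a^{(s-1)}<\cdots<X_a^{(0)}$, we can append $v_1,\dots,v_t$ and obtain a blue $P^t_{\chi_d(a)+t}$: the only new edges are of the form $u_iv_j$ and $v_iv_j$, and all of them lie inside the blue clique $V_a$. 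Then, since $V_a<V_{a'}$, we can append $w_1,\dots,w_t$. The new edges are $v_iw_j$ (blue by the assumed $K_{t,t}$) and $w_iw_j$ (blue inside $V_{a'}$); there are no $u_iw_j$ edges to verify, because in the new ordering each $u_i$ sits at distance strictly greater than $t$ from each $w_j$. The resulting blue $P^t_{\chi_d(a)+2t}$ ends in $R\subseteq X_{a'}^{(d')}$, as desired.

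For the contradiction, let $a=a_0,a_1,\dots,a_{d'-d}=a'$ be the descending path in $F$, with $a_i$ at depth $d+i$. Alternating the forest property (if $b'$ is a child of $b$ at depth $e\leq s-2$, then $\chi_e(b)\geq \chi_e(b')$) with the pointwise monotonicity $\chi_0(b)\geq\cdots\geq\chi_{s-2}(b)$, I get
\[
\chi_{d'-1}(a')\leq \chi_{d'-1}(a_{d'-d-1})\leq \chi_{d'-2}(a_{d'-d-1})\leq \chi_{d'-2}(a_{d'-d-2})\leq\cdots\leq \chi_d(a_0)=\chi_d(a).
\]
Every index stays in $[0,s-2]$ because the largest forest-step depth used is $d'-1\leq s-2$. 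This contradicts $\chi_{d'-1}(a')\geq \chi_d(a)+2t$ and proves the claim.

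The only real obstacle is bookkeeping: checking that the alternating chain bottoms out exactly at $(\chi_d,a_0)$ with all indices admissible, and verifying that the appended block of $2t$ vertices really forms a $t$-th power of a path (not merely a blue set). Both reduce to the two built-in features of the $s$-red-net setup: each $V_i$ is a blue clique, and the internal parts of each $V_i$ together with the blocks $V_a<V_{a'}$ are ordered exactly as the construction requires.
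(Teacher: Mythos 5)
Your proposal is correct and follows essentially the same route as the paper's proof: the same alternating chain of inequalities (forest property at each depth interleaved with the pointwise monotonicity $\chi_0\ge\cdots\ge\chi_{s-2}$) yielding $\chi_{d'-1}(a')\le\chi_d(a)$, contradicted by extending a longest blue $P^t_{\chi_d(a)}$ through $L$ and then $R$ to get $\chi_{d'-1}(a')\ge\chi_d(a)+2t$. Your extra check that no $u_iw_j$ edges are needed is a correct (and slightly more careful) spelling-out of the paper's extension step.
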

    \begin{proof}
	Let $a=b_0,b_1,\dots,b_{d'-d}=a'$ be the unique path from $a$ to $a'$ in $F$. Then $b_i$ is at depth $d+i$. 
        We have $\chi_{d+i}(b_i) \geq \chi_{d+i}(b_{i+1})$ for every $0 \leq i < d'-d$, by the definition of a $(\chi_0,\dots,\chi_{s-2})$-forest (\cref{def:chi values}). Also, $\chi_{d+i}(b_{i+1}) \geq \chi_{d+i+1}(b_{i+1})$ for every 
        $0 \leq i \leq d'-d-2$,
        by \eqref{eq:chi_functions monotone}. 
        So $\chi_{d+i}(b_i) \geq \chi_{d+i+1}(b_{i+1})$ for $0 \leq i \leq d'-d-2$, meaning that the sequence $(\chi_{d+i}(b_i))_{i=0}^{d'-d-1}$ is non-increasing. 
        Setting $i=0$ and $i=d'-d-1$, we get $\chi_d(a) = \chi_d(b_0) \geq \chi_{d'-1}(b_{d'-d-1}) \geq \chi_{d'-1}(b_{d'-d}) =  \chi_{d'-1}(a')$, where the last inequality again uses the definition of a $(\chi_0,\dots,\chi_{s-2})$-forest.  
	
        Now suppose by contradiction that there is a blue $K_{t,t}$ with sides $A \subseteq X_a^{(d)}$ and $A' \subseteq X_{a'}^{(d')}$. 
        By the definition of $\chi_d(\cdot)$, there exists a blue copy $P$ of $P_{\chi_d(a)}^t$ whose last $t$ vertices belong to $X_a^{(d+1)} \cup \dots \cup X_a^{(s-1)}$.
        We can extend $P$ by adding the sets $A$ and $A'$, using that both $A$ and $A'$ induce blue cliques, and all edges between $A$ and the last $t$ vertices of $P$ are blue (because $V_a,V_{a'}$ are blue cliques). 
        Extending $P$ in this way gives a blue copy $Q$ of $P_{\chi_d(a)+2t}^t$, whose last $t$ vertices are $A' \subseteq X_{a'}^{(d')}$. Hence, $\chi_{d'-1}(a') \geq \chi_d(a)+2t >\chi_d(a)$, a contradiction to $\chi_d(a) \geq \chi_{d'-1}(a')$. 
    \end{proof}

    We now use \cref{claim:blue K_{t,t}} to find an $s$-red-net. 
    For every $0 \leq d \leq s-1$ and every $v \in V(F)$ at depth $d$ in $F$, define $X_v := X_v^{(d)} \subseteq V_v$. 
    Then $X_v$ is a blue clique of size $r$. For every $v,u \in V(F)$ with $v < u$, we have $V_v < V_u$ and hence $X_v < X_u$. 
    Also, by \cref{claim:blue K_{t,t}}, for every $v \in V(F)$ and every descendant $u$ of $v$, there is no blue $K_{t,t}$ with one part in $X_v$ and one part in $X_u$. 
    Hence, $(F,(X_v)_v)$ is an $s$-red-net of order $r$. 
    Also, $|F| \geq n \geq 3sn/r$. 
    By \cref{cor:s-net is enough}, there is a red $K_{s+1}$ or a blue $P_n^t$. This completes the proof. 
\end{proof}

\section{Concluding remarks and open problems}\label{sec:concluding remarks}
In \Cref{thm:clique vs power-path}, we showed that 
$R_{<}(K_{s+1},P_n^t) \leq s^{O(st)} n$. As for lower bounds, it holds that $R_{<}(K_{s+1},P_n^t) > (R(K_{s+1},K_{t+1}) - 1) \cdot (n-1)/t$. Indeed, partition the vertices into $(n-1)/t$ intervals of size $R(K_{s+1},K_{t+1}) - 1$, and on each of the intervals, put a red/blue coloring with no red $K_{s+1}$ and no blue $K_{t+1}$. All edges between the intervals are blue. Then there is no red $K_{s+1}$ and no blue $P_n^t$, because a blue $P_n^t$ would have to contain $t+1$ vertices from one of the intervals which appear consecutively in the $P_n^t$, and hence must form a blue $K_{t+1}$. 
Combining our upper and lower bounds, we obtain $s^{\Omega(t)} \cdot n \leq R_{<}(K_{s+1},P_n^t) \leq s^{O(st)} \cdot n$ (when say $t \ll s$). It would be interesting to determine the correct dependence of the exponent on $s$ and $t$. 

In \Cref{thm:Kn Pn^t} we proved that $R_<(P_n^t,K_n) = O_t(n^{t(2t-1)})$. It would be interesting to improve the exponent further.
    \begin{conjecture}
        $R_<(P_n^t,K_n) \leq n^{O(t)}$.
    \end{conjecture}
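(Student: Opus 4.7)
The target $R_<(P_n^t, K_n) \le n^{O(t)}$ would reduce the exponent's $t$-dependence from quadratic to linear in \Cref{thm:Kn Pn^t}, bringing it to within a constant factor (in the exponent) of the Erd\H{o}s--Szekeres-type lower bound $\tilde{\Omega}(n^{(t+2)/2})$. The plan is to refine the proof of \Cref{thm:Kn Pn^t} by replacing its ``good pair'' structure---two red $t$-cliques sharing one vertex and spanning $2t-1$ vertices---with a denser \emph{shift} structure: pairs of red $(t+1)$-cliques sharing $t$ consecutive vertices and spanning only $t+2$ vertices.

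Concretely, for each red $K_{t+1}$ $E=\{v_1<\dots<v_{t+1}\}$, I would define $\chi(E)$ to be the length of the longest red $P^t$ whose final $t+1$ vertices are exactly $E$. A \emph{shift} from $E$ to $F$ takes $F=\{v_2,\dots,v_{t+2}\}$ with $F$ a red $K_{t+1}$. Since in any $P^t$ the vertices $v_1$ and $v_{t+2}$ are at distance $t+1$ and so need not be adjacent, a routine check gives $\chi(F) > \chi(E)$ along every shift. Therefore, under the assumption of no red $P_n^t$, we have $\chi(E) \in [t+1,n-1]$ and any shift chain has length at most $n-t-1$; equivalently, a shift chain of length $n-t$ \emph{is} a red $P_n^t$.

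The core of the plan is to prove a shift-chain analogue of \Cref{claim:clique_chain}: every $K_N$ with $N \ge R(K_{t+1},K_n) \cdot n^{O(1)}$ contains either a blue $K_n$ or a shift chain of length $n-t$. Combined with the Erd\H{o}s--Szekeres bound $R(K_{t+1},K_n) \le n^t$, this would immediately yield $R_<(P_n^t, K_n) \le n^{O(t)}$.

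The main obstacle is precisely this shift-chain analogue. The proof of \Cref{claim:clique_chain} does not adapt directly: extending a loose clique chain needs only a single vertex in the right-neighborhood of the endpoint, whereas extending a shift chain requires a new vertex in the \emph{common} red neighborhood of the last $t$ vertices of the chain. To overcome this, I would try a nested pigeonhole with $t$ layers of $\chi$-equalization: first restrict to a subset of size $N/n$ on which a coarse indicator of $\chi$ is constant, and then iterate the restriction, fixing one additional coordinate of the shift-chain endpoint per layer. Each layer costs an $n$-factor, keeping the total loss within $n^{O(t)}$. As a fallback, one could attempt to adapt the red-net framework of \Cref{sec:K_{s+1}} with the roles of the colors swapped, replacing the extraction of a red $K_{s+1}$ from sets $A_1,\dots,A_{s+1}$ (with no blue $K_{t,t}$ between pairs) by extracting a blue $K_n$ via dependent random choice inside polynomially-sized blue-dense subsets; this route appears technically more involved and it is unclear whether it achieves the target exponent.
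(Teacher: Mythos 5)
This statement is posed as an open conjecture in the concluding remarks; the paper does not prove it, so there is no proof to compare against. Your proposal, read on its own terms, is a research plan rather than a proof, and its central step is missing. Your preliminary observations are correct: the last $t+1$ vertices of any $P_\ell^t$ do form a clique, so $\chi(E)$ is well-defined for a red $K_{t+1}$; a shift $E=\{v_1<\dots<v_{t+1}\}\mapsto F=\{v_2,\dots,v_{t+2}\}$ with $F$ red does give $\chi(F)\ge\chi(E)+1$ (appending $v_{t+2}$ only requires red edges to $v_2,\dots,v_{t+1}$, not to $v_1$); and a shift chain of $n-t$ cliques covers $n$ vertices with every pair at distance at most $t$ red. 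But that last observation is exactly what defeats the reduction: a shift chain of length $n-t$ \emph{is} a red $P_n^t$, so the ``core of the plan'' — that every $K_N$ with $N\ge R(K_{t+1},K_n)\cdot n^{O(1)}$ contains a blue $K_n$ or such a chain — is a verbatim restatement of the conjecture. Nothing has been reduced.

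The $\chi$-machinery would only earn its keep if you could run the argument of \Cref{claim:clique_chain}: pass to a large vertex set on which $\chi$ is constant across all red $(t+1)$-cliques, find a red $K_{t+2}$ there, and derive a contradiction from the two nested $(t+1)$-cliques it contains. The obstruction is that $\chi$ is a function on $(t+1)$-subsets, not on vertices, so there is no vertex partition to pigeonhole over; equalizing an $n$-valued function on $(t+1)$-tuples over a large set is a canonical-Ramsey-type problem whose cost is not obviously (and plausibly is not) $n^{O(t)}$. Your ``nested pigeonhole with $t$ layers, each costing a factor of $n$'' does not explain what is being equalized at each layer or why fixing one ``coordinate'' of the chain endpoint controls the $\chi$-values of the cliques through the remaining coordinates; this is precisely where the paper's own method loses the extra factor of $t$ in the exponent (its good pairs share only one vertex, which is why the counting runs over $2t-1$ vertices and yields exponent $t(2t-1)$, and why the authors can only prove the $n^{O(t)}$ bound for the blowup $P_n[t]$, where semi-red cliques suffice). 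The fallback via a color-swapped red-net and dependent random choice is likewise only a suggestion. In short, you have correctly located the difficulty but not resolved it, and the conjecture remains open after your argument.
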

This conjecture, if true, would be tight (up to the implied constant in the exponent) because $R_<(P_n^t,K_n) \geq R(K_{t+1},K_n) \geq \tilde{\Omega}(n^{(t+2)/2})$, see~\cite{BK,Spencer}. 
We note that our proof method for \Cref{thm:Kn Pn^t} can be used to prove $R_<(P_n[t],K_n) \leq n^{O(t)}$, where $P_n[t]$ is the $t$-blowup of the monotone path $P_n$ (i.e., $P_n[t]$ is obtained by replacing each vertex of $P_n$ with $t$ vertices and replacing edges with complete bipartite graphs). This bound is tight because $R_<(P_n[t],K_n) \geq R(K_{t,t},K_n) \geq n^{\Omega(t)}$. 
\begin{proposition}
$R_<(P_n[t],K_n) \leq (2tn^3)^{2t-1}$.
\end{proposition}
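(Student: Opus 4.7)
The plan is to adapt the proof of Theorem~\ref{thm:Kn Pn^t} by replacing $t$-cliques with ordered $t$-sets and taking $M=tn^2$, so that the target bound becomes $N=(2Mn)^{2t-1}=(2tn^3)^{2t-1}$. I would fix a red/blue coloring of $K_N$ with no red $P_n[t]$ and no blue $K_n$ and derive a contradiction.

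For each ordered $t$-set $X$, let $\chi(X)\in\{1,\dots,n-1\}$ be the largest $\ell$ such that some red $P_\ell[t]$ has $X$ as its last block. A key difference from Theorem~\ref{thm:Kn Pn^t} is that a red $K_{t,t}$ between two \emph{disjoint} $t$-sets $X<Y$ already forces $\chi(Y)\ge\chi(X)+1$, so the notion of a good pair must be moved to overlapping $t$-sets. I would call $(X,Y)$ a good pair if $X=\{x_1,\dots,x_{t-1},v\}$ and $Y=\{v,z_1,\dots,z_{t-1}\}$ share exactly the vertex $v$ and $\chi(X)\ge\chi(Y)$; this occupies $2t-1$ vertices, matching the exponent in the bound. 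The outer steps parallel Theorem~\ref{thm:Kn Pn^t}: every chain of $n$ overlapping $t$-sets sharing one vertex forces a good pair (since $\chi\in\{1,\dots,n-1\}$ cannot strictly increase $n$ times), so every $nt$ vertices contain one; a double count then yields at least $N^{2t-1}/(2^{2t-1}M^{2t-2}n^{2t-2})$ good pairs; averaging over the $2t-2$ outer vertices $(x_1,\dots,x_{t-1},z_1,\dots,z_{t-1})$ gives a set $Y'$ of size $\ge Mn=tn^3$ of candidate middle vertices $v$; and pigeonholing on $c=\chi(\{x_1,\dots,x_{t-1},v\})$ isolates $Y\subseteq Y'$ of size $\ge M=tn^2$ with common $\chi$-value $c$.

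The main obstacle is the extension step, which is subtler than in Theorem~\ref{thm:Kn Pn^t}: extending a red $P_c[t]$ ending at $\{x_1,\dots,x_{t-1},v\}$ requires a full red $K_{t,t}$ to a new disjoint block, rather than merely a red $K_t$ in the middle. My plan is to strengthen the definition of a good pair to require that all edges from $x_1,\dots,x_{t-1}$ to $v$ and from $v$ to $z_1,\dots,z_{t-1}$ are red; this costs an additional factor of $2^{t-1}$ in $M$, which can be absorbed into the $t$ factor of $M=tn^2$. Then, inside $Y$, an ordered $R(K_{1,t},K_n)$-type bound (using $|Y|\ge tn^2$) produces a vertex $v_1\in Y$ with $t-1$ red right-neighbors $v_2,\dots,v_t\in Y$; since each $v_i\in Y$ is red to every $x_j$, choosing an appropriate $w>v_t$ that is red to $x_1,\dots,x_{t-1},v_1$ makes $B':=\{v_2,\dots,v_t,w\}$ a block red-connected to $\{x_1,\dots,x_{t-1},v_1\}$ via a full red $K_{t,t}$. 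The resulting red $P_{c+1}[t]$ contradicts $\chi(\{x_1,\dots,x_{t-1},v_1\})=c$. Making this extension close up cleanly within the budget $|Y|\ge tn^2$ (rather than $|Y|\ge n^{2t}$, which a naively iterated Ramsey argument might need) is the main technical difficulty.
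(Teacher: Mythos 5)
Your outer framework (the $\chi$-function on $t$-sets, good pairs of two $t$-sets overlapping in one vertex so that a pair occupies $2t-1$ vertices, the double count and the pigeonhole down to a set $Y$ of size about $tn^2$ with a common value $c$) is the same as the paper's, and you correctly identify the extension step as the crux. But that step, as you have written it, has two genuine gaps. First, the final contradiction is mis-stated: appending one new block $B'$ to a red $P_c[t]$ ending at $\{x_1,\dots,x_{t-1},v_1\}$ produces a red $P_{c+1}[t]$ whose \emph{last} block is $B'$, which shows $\chi(B')\ge c+1$ and says nothing against $\chi(\{x_1,\dots,x_{t-1},v_1\})=c$, since that quantity only bounds paths \emph{ending} at that block. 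The contradiction has to be routed through the second half of the good pair, namely the inequality $\chi(y,z_1,\dots,z_{t-1})\le c$ for $y\in Y$; your extension never touches the $z_i$'s, so the second block of each good pair plays no role and nothing is actually contradicted. The paper fixes this by appending \emph{two} blocks, the second being $\{y_{t+2},z_1,\dots,z_{t-1}\}$ with $y_{t+2}\in Y$, so that the extended path ends at a block whose $\chi$-value is already known to be at most $c$.

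Second, the vertex $w>v_t$ that is red to all of $x_1,\dots,x_{t-1},v_1$ is conjured without justification, and even granting it, the structure you extract from $Y$ is only half of what is needed: you find $v_1$ with red edges \emph{to} $v_2,\dots,v_t$, but to append a further block you also need a later vertex receiving red edges \emph{from} all of the middle vertices. The paper's device is a \emph{semi-red clique} $y_1<\dots<y_{t+2}$ (all edges $y_1y_i$ and $y_iy_{t+2}$ red), found inside $Y$ via the dichotomy that every $(t-2)n^2$ vertices contain a blue $K_n$ or a semi-red $t$-clique (either some vertex has large forward red degree and one iterates inside its red neighbourhood, or one greedily builds a blue $K_n$). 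The same dichotomy, fed into the clique-chain argument of \cref{claim:clique_chain}, is also what legitimately replaces your vague ``factor of $2^{t-1}$'' for requiring the stars in a good pair to be red: the true cost is a Ramsey-type trade-off against a blue $K_n$, yielding chains of $n$ semi-red cliques in $((t-2)n^2-1)n+1$ vertices rather than $nt$. So the architecture is right, but the semi-red clique notion --- both in the definition of a good pair and, crucially, as a $(t+2)$-clique inside $Y$ at the end --- is the missing ingredient.
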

\begin{proof}[Proof sketch]
The proof is similar to that of \Cref{thm:Kn Pn^t}.
A {\em semi-red $t$-clique} is a clique $x_1 < \dots < x_t$ such that all edges $x_1x_i$ ($2 \leq i \leq t$) and $x_ix_t$ ($1 \leq i \leq t-1$) are red. 
It can be shown that every set of $N := (t-2)n^2$ vertices contains a blue $K_n$ or semi-red $t$-clique. Indeed, if there is no vertex with forward red degree at least $(t-2)n$, then one can greedily find a blue clique of size $\frac{N}{(t-2)n} \geq n$. Thus, there is a vertex $x_1$ with forward red degree at least $(t-2)n$. By the same argument inside the forward red neighbourhood of $x_1$, we can find a vertex $x_t$ with at least $t-2$ backward red neighbours $x_2,\dots,x_{t-1}$. Now $x_1 < x_2 < \dots < x_t$ is a semi-red $t$-clique. 

One can then show, using the argument from \Cref{claim:clique_chain}, that every set of $((t-2)n^2 - 1)n + 1$ vertices contains a blue $K_n$ or a chain of $n$ semi-red $t$-cliques. 
Then, we essentially repeat the proof of \Cref{thm:Kn Pn^t}: consider a red/blue coloring of $K_N$ with $N=(2tn^3)^{2t-1}$.
We assume that there is no red $P_n[t]$ or blue $K_n$.
For a semi-red clique $x_1<\dots<x_t$, define $\chi(x_1,\dots,x_t)$ to be the largest number $\ell$ such that $x_1,\dots,x_t$ are the last $t$ vertices of a red $P_{\ell}[t]$.
Then $1 \leq \chi(x_1,\dots,x_t) \leq n-1$.
Using the choice of $N$, one can obtain vertices $x_1 < \dots < x_{t-1} < z_1 < \dots < z_{t-1}$, a set $Y$ with $|Y| \geq tn^2$, and a value $1 \leq c \leq n-1$, such that for every $y \in Y$, $\{x_1,\dots,x_{t-1},y\},\{y,z_1,\dots,z_{t-1}\}$ are semi-red, $\chi(x_1,\dots,x_{t-1},y) = c$, and $\chi(y,z_1,\dots,z_{t-1}) \leq c$. 
Then, by finding a semi-red $(t+2)$-clique $y_1 < \dots < y_{t+2}$ inside $Y$, we can extend a longest $P_c[t]$ ending at $x_1,\dots,x_{t-1},y_1$ by adding the vertices $y_2,\dots,y_{t+1},y_{t+2},z_1,\dots,z_{t-1}$. 
This gives $\chi(y_{t+2},z_1,\dots,z_{t-1}) > \chi(x_1,\dots,\chi_{t-1},y_1)$, a contradiction. 
\end{proof}

In \Cref{thm:P^t} we obtain a new bound on $R_<(P_n^t,P_n^t)$, whose exponent grows linearly with $t$. 
Since we do not have a corresponding lower bound, we wonder whether the following might be true.
    \begin{problem}
    Is there a constant $C$ independent of $t$, such that $R_<(P_n^t,P_n^t) = O_t(n^C)$?
    \end{problem}
\noindent
Even improving the exponent to $o(t)$ would be interesting. 

In \Cref{thm:non-increasing set} we proved that $g(n,s) = O_s(n^C)$ for a constant $C = C(s)$. 
It may be interesting to determine the order of growth of (the optimal such) $C(s)$. Our proof of \Cref{thm:non-increasing set} gives an upper bound on $C(s)$ of the order~$s^s$, and this is likely far from optimal. Does $C(s)$ grow polynomially in~$s$? Also, for $s=3$, is it true that $g(n,3) = O(n)$? 

One could also consider the analogous extremal functions for weaker notions of a non-increasing triple (see \Cref{def:non-increasing}). There are two such notions: one is to require that $\chi(x,y) \geq \chi(x,z) \geq \chi(y,z)$, and one is only to require that $\chi(x,y) \geq \chi(y,z)$. For each of these notions, how large should~$N$ be to guarantee a non-increasing set of size $s$ in an $n$-coloring of $K_N$?

\bibliographystyle{plain} 

\end{document}